\newtheorem{thm}{Theorem}[section]
\newtheorem{lem}[thm]{Lemma}
\theoremstyle{definition}
\theoremstyle{remark}
\newtheorem{rem}[thm]{Remark}
\DeclareMathOperator{\supp}{supp}
\DeclareMathOperator{\dist}{dist}
\newcommand{\loc}{\textup{loc}}
\newcommand{\glob}{\text{glob}}
\DeclareMathOperator{\Real}{Re}
\newcommand{\BMO}{\textup{BMO}}
\setlist[enumerate,1]{label=(\roman*)}
\newcounter{BDQSc}
\numberwithin{equation}{section}
\begin{document}
\title[Hardy spaces and Laguerre expansions]
 {Maximal function characterization of Hardy spaces related to Laguerre polynomial expansions}

\author[J. J. Betancor]{Jorge J. Betancor}
    \address{Jorge J. Betancor\newline
        Departamento de An\'alisis Matem\'atico, Universidad de La Laguna,\newline
        Campus de Anchieta, Avda. Astrof\'isico S\'anchez, s/n,\newline
        38721 La Laguna (Sta. Cruz de Tenerife), Spain}
    \email{jbetanco@ull.es}

    \author[E. Dalmasso]{Estefan\'ia Dalmasso}
    \address{Estefan\'ia Dalmasso, Pablo Quijano\newline
        Instituto de Matem\'atica Aplicada del Litoral, UNL, CONICET, FIQ.\newline Colectora Ruta Nac. Nº 168, Paraje El Pozo,\newline S3007ABA, Santa Fe, Argentina}
    \email{edalmasso@santafe-conicet.gov.ar, pabloquijanoar@gmail.com}

    \author[P. Quijano]{Pablo Quijano}

    \author[R. Scotto]{Roberto Scotto}
    \address{Roberto Scotto\newline
        Universidad Nacional del Litoral, FIQ.\newline Santiago del Estero 2829,\newline S3000AOM, Santa Fe, Argentina}
    \email{roberto.scotto@gmail.com}

\thanks{The first author is partially supported by PID2019-106093GB-I00 (Ministerio de Ciencia e Innovaci\'on, Spain). The second and fourth authors are partially supported by grants PICT-2019-2019-00389 (ANPCyT), PIP-11220200101916CO (CONICET) and CAI+D 2019-015 (UNL)}
\date{\today}
\subjclass{42A50, 42B20, 42B25.}

\keywords{Hardy spaces, atoms, maximal functions, Laguerre polynomials}

\maketitle

\begin{abstract}
In this paper we introduce the atomic Hardy space $\mathcal{H}^1((0,\infty),\gamma_\alpha)$ associated with the non-doubling probability measure $d\gamma_\alpha(x)=\frac{2x^{2\alpha+1}}{\Gamma(\alpha+1)}e^{-x^2}dx$ on $(0,\infty)$, for ${\alpha>-\frac12}$. 
We obtain characterizations of  $\mathcal{H}^1((0,\infty),\gamma_\alpha)$ by using two local maximal functions. We also prove that the truncated maximal function defined through the heat semigroup generated by the Laguerre differential operator is bounded from $\mathcal{H}^1((0,\infty),\gamma_\alpha)$ into $L^1((0,\infty),\gamma_\alpha)$.
\end{abstract}

\setcounter{secnumdepth}{3}
\setcounter{tocdepth}{3}

\section{Introduction and main results}

We consider, for $\alpha>-\frac12 $, the probability measure $d\gamma_\alpha(x)=\frac{2x^{2\alpha+1}}{\Gamma(\alpha+1)}e^{-x^2}dx$ on~$(0,\infty)$. The measure $\gamma_\alpha$ is not doubling (not even locally doubling) with respect to the usual metric in $(0,\infty)$. We could define Hardy spaces as in \cite{To}, since it is clear that $\sup_{r,x\in (0,\infty)}\gamma_\alpha(I(x,r))/r<\infty$, where for every $r,x\in (0,\infty)$, we denote $I(x,r):=(x-r,x+r)\cap (0,\infty)$. However, Tolsa's definition is not satisfactory for our purposes because the harmonic analysis operators associated with Laguerre polynomial expansions are not Calder\'on-Zygmund operators in this setting. We will define the Hardy space related to the measure $\gamma_\alpha$ in $(0,\infty)$ following the ideas developed in~\cite{MM} (see also \cite{CMM2}) for the Ornstein-Uhlenbeck setting and the non-standard Gaussian measure~$d\gamma(x)=\pi^{-n/2}e^{-|x|^2}dx$ in $\mathbb{R}^n$.

In order to select a family of intervals over which $\gamma_\alpha$ is indeed doubling, we consider the admissibility function $m(x)=\min\left\{1,\frac{1}{x}\right\}$, for $x\in (0,\infty)$. Given~$a>0$, we say that an interval $I(x,r)$ with $0<r\le x$
is \textbf{$a$-admissible} if $r\leq am(x)$. The class of such intervals will be denoted by $\mathcal{B}_a$.
To simplify notation, we shall write~$\mathcal{B}:=\mathcal{B}_1$. For every $a>0$, it is easy to see that the measure $\gamma_\alpha$ is doubling on $\mathcal{B}_a$, that is, there exists $C_{\alpha,a}>0$ such that
\[\gamma_\alpha(I(x,2r))\le C_{\alpha,a}\gamma_\alpha(I(x,r)),\quad I(x,r)\in \mathcal{B}_a.\]

The atoms will be defined over these families of intervals as follows. Given $1<q\le \infty$ and $a>0$, a measurable function $b$ defined on $(0,\infty)$ is said to be an~\textbf{$(a,q,\alpha)$--atom} when $b(x)=1$ for every $x\in (0,\infty)$, or there exist $0<r\le x$ such that $I(x,r)\in \mathcal{B}_a$ and satisfying that
\begin{enumerate}
    \item $\supp(b)\subset I(x,r)$;
    \item $\|b\|_{L^q((0,\infty),\gamma_\alpha)}\le \gamma_\alpha(I(x,r))^{\frac{1}{q}-1}$, where $\frac{1}{q}=0$ when $q=\infty$;
    \item $\int_0^\infty b(y)\ d\gamma_\alpha(y)=0$.
\end{enumerate}

For $1<q\leq \infty$ and $a>0$, the atomic Hardy space $\mathcal{H}^{1,q}_a((0,\infty),\gamma_\alpha)$ consists of all of those $f\in L^1((0,\infty),\gamma_\alpha)$ such that $f=\sum_{j=0}^\infty\lambda_jb_j$ where, for every ${j\in \mathbb{N}}$,~$b_j$ is an $(a,q,\alpha)$--atom and $\lambda_j\in \mathbb{C}$ with $\sum_{j=0}^\infty|\lambda_j|<\infty$. Note that, under the above conditions, the series $\sum_{j=0}^\infty\lambda_jb_j$ converges in $L^1((0,\infty),\gamma_\alpha)$. We define, for every~$f\in \mathcal{H}^{1,q}_a((0,\infty),\gamma_\alpha)$,
\[\|f\|_{\mathcal{H}^{1,q}_a((0,\infty),\gamma_\alpha)}=\inf\sum_{j=0}^\infty|\lambda_j|,\]
where the infimum is taken over all the sequences $\{\lambda_j\}_{j\in \mathbb{N}}\in \mathbb{C}$ with $\sum_{j=0}^\infty|\lambda_j|<\infty$ and $f =\sum_{j=0}^\infty\lambda_jb_j$, where $b_j$ is an $(a,q,\alpha)$--atom, for every $j\in \mathbb{N}$.

Actually, the space $\mathcal{H}^{1,q}_a((0,\infty),\gamma_\alpha)$ does not depend on $a$ and $q$. The following result can be proved by proceeding as in \cite[Section~2]{MM} and \cite[Theorem~2.2]{MMS-max}.

\begin{thm}\label{Th1.1} Let $\alpha>-\frac12 $, $a>0$ and $1<q\le \infty$. Then, $\mathcal{H}^{1,q}_a((0,\infty),\gamma_\alpha)$ and~$\mathcal{H}^{1,\infty}_1((0,\infty),\gamma_\alpha)$ coincide, algebraic and topologically.
\end{thm}

In view of the above result, we will simply write $\mathcal{H}^1((0,\infty),\gamma_\alpha)$ to refer to the Hardy space $\mathcal{H}^{1,q}_a((0,\infty),\gamma_\alpha)$,  for any $a>0$ and $1<q\le\infty$.

We now introduce a maximal function we shall use in order to characterize $\mathcal{H}^1((0,\infty),\gamma_\alpha)$. Our definition is inspired in the one given in \cite[p.~273]{MS}.

We consider the measure $\mathfrak{m}_\alpha$ on $(0,\infty)$ by $d\mathfrak{m}_\alpha(x)=x^{2\alpha+1}dx$. By $C_c^1(0,\infty)$ we denote the space of continuously differentiable functions with compact support in~$(0,\infty)$. 

We define, for every $x\in (0,\infty)$, the sets $\mathcal{A}_x^\alpha$ and $\mathcal{A}_{x,\loc}^\alpha$ of functions as follows. Given $x\in (0,\infty)$, a function $\phi\in C^1_c(0,\infty)$ is said to be in $\mathcal{A}_x^\alpha$ when there exists~${0<r\le x}$ such that $\supp(\phi)\subset I(x,r)$ and
\[\mathfrak{m}_\alpha(I(x,r))\|\phi\|_\infty\le 1, \quad r\mathfrak{m}_\alpha(I(x,r))\|\phi'\|_\infty\le 1.\] 
On the other hand, when all of the above hold with some $0<r\le \min\{x,m(x)\}$, $\phi$ is said to be in $\mathcal{A}_{x,\loc}^\alpha$. In other words, $\phi\in \mathcal{A}_{x,\loc}^\alpha$ when $\phi \in \mathcal{A}_x^\alpha$ with $I(x,r)\in \mathcal{B}$. Here, $\|g\|_\infty$ denotes the essential supremum of $g$ in $(0,\infty)$ with respect to the Lebesgue measure (equivalently, to $\mathfrak{m}_\alpha$ or $\gamma_\alpha$). 

We are now in position to define the maximal functions we shall be dealing with. Given $f\in L^1((0,\delta),\mathfrak{m}_\alpha)$ for every $\delta>0$, we set
\[
\mathcal{M}_\alpha(f)(x)=\sup_{\phi\in \mathcal{A}_x^\alpha}\left|\int_0^\infty f(y)\phi(y)\ d\mathfrak{m}_\alpha(y)\right|,\quad x\in (0,\infty),
\]
and
\[
\mathcal{M}_{\alpha,\loc}(f)(x)=\sup_{\phi\in \mathcal{A}_{x,\loc}^\alpha}\left|\int_0^\infty f(y)\phi(y)\ d\mathfrak{m}_\alpha(y)\right|,\quad x\in (0,\infty).
\]

\begin{thm}\label{H1-Malfaloc} Given $\alpha>-\frac12$. A function $f\in L^1((0,\infty),\gamma_\alpha)$ is in $\mathcal{H}^1((0,\infty),\gamma_\alpha)$ if and only if $\mathcal{M}_{\alpha,\loc}(f)\in L^1((0,\infty),\gamma_\alpha)$ and ${\mathcal{E}_\alpha(f):=\int_0^\infty y\left|\int_y^\infty fd\gamma_\alpha\right|dy <\infty}$. Furthermore, for every $f\in \mathcal{H}^1((0,\infty),\gamma_\alpha)$,
\[
\|f\|_{\mathcal{H}^1((0,\infty),\gamma_\alpha)}\simeq \|\mathcal{M}_{\alpha,\loc}(f)\|_{L^1((0,\infty),\gamma_\alpha)}+\mathcal{E}_\alpha(f)<\infty.
\]
\end{thm}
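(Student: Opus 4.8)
The plan is to establish the two inclusions, with the matching norm estimates, separately. For the inclusion ``$\mathcal{H}^1\subseteq\{\mathcal{M}_{\alpha,\loc}(\cdot)\in L^1,\ \mathcal{E}_\alpha(\cdot)<\infty\}$'' and the bound $\|\mathcal{M}_{\alpha,\loc}(f)\|_{L^1((0,\infty),\gamma_\alpha)}+\mathcal{E}_\alpha(f)\lesssim\|f\|_{\mathcal{H}^1((0,\infty),\gamma_\alpha)}$, observe that $\mathcal{M}_{\alpha,\loc}$ and $\mathcal{E}_\alpha$ are subadditive and continuous along $L^1((0,\infty),\gamma_\alpha)$-convergent sequences (for $\mathcal{M}_{\alpha,\loc}$ because each competitor $\phi\in C^1_c(0,\infty)$, so the pairing against it is continuous on $L^1_{\loc}((0,\infty))$), and that the atomic series representing $f$ converges in $L^1((0,\infty),\gamma_\alpha)$; hence it suffices to bound $\|\mathcal{M}_{\alpha,\loc}(b)\|_{L^1((0,\infty),\gamma_\alpha)}+\mathcal{E}_\alpha(b)$ by an absolute constant for $b\equiv1$ and for every $(a,\infty,\alpha)$-atom $b$. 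For $b\equiv1$ one has $\mathcal{M}_{\alpha,\loc}(1)\le1$ and, by Fubini, $\mathcal{E}_\alpha(1)=\tfrac12\int_0^\infty y^2\,d\gamma_\alpha(y)=\tfrac{\alpha+1}{2}$. For a genuine atom $b$ supported on $I_0=I(x_0,r_0)\in\mathcal{B}_a$ the starting point is that $e^{-y^2}\simeq e^{-x^2}$ on every admissible interval $I(x,r)$, so there $\mathfrak{m}_\alpha(I(x,r))\simeq e^{x^2}\gamma_\alpha(I(x,r))$; this turns the trivial estimate of $|\int f\phi\,d\mathfrak{m}_\alpha|$ into $\mathcal{M}_{\alpha,\loc}(b)\lesssim M_{\loc}(b)$ pointwise, $M_{\loc}$ being the Hardy--Littlewood maximal function of $\gamma_\alpha$ over the doubling family $\mathcal{B}$, and its $L^2((0,\infty),\gamma_\alpha)$-boundedness together with the doubling of $\gamma_\alpha$ on $\mathcal{B}_a$ and $\|b\|_{L^2((0,\infty),\gamma_\alpha)}\le\gamma_\alpha(I_0)^{-1/2}$ gives $\int_{2I_0}\mathcal{M}_{\alpha,\loc}(b)\,d\gamma_\alpha\lesssim1$. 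Off $2I_0$ I would use the cancellation: writing $d\gamma_\alpha=c_\alpha e^{-y^2}\,d\mathfrak{m}_\alpha$ and $\psi=\phi\,e^{(\cdot)^2}$ one has $\int b\phi\,d\mathfrak{m}_\alpha=c_\alpha^{-1}\int_{I_0}b(y)\bigl(\psi(y)-\psi(x_0)\bigr)\,d\gamma_\alpha(y)$, and since on an admissible $I(x,\rho)$ meeting $I_0$ one has $\|\psi'\|_\infty\lesssim e^{x^2}\rho^{-1}\mathfrak{m}_\alpha(I(x,\rho))^{-1}$ (using $\rho^{-1}\ge m(x)^{-1}\ge x$), this leads to $\mathcal{M}_{\alpha,\loc}(b)(x)\lesssim r_0\bigl(\dist(x,I_0)\,\gamma_\alpha(I(x,\dist(x,I_0)))\bigr)^{-1}$ on the (admissibility-bounded) set where it is nonzero, which integrates to $\lesssim1$; and $\int_y^\infty b\,d\gamma_\alpha$ vanishes for $y\notin I_0$ and is $\le\|b\|_{L^1((0,\infty),\gamma_\alpha)}\le1$, so $\mathcal{E}_\alpha(b)\le\int_{I_0}y\,dy\lesssim x_0\,m(x_0)\le1$.

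For the reverse inclusion, assume $f\in L^1((0,\infty),\gamma_\alpha)$ with $u:=\mathcal{M}_{\alpha,\loc}(f)\in L^1((0,\infty),\gamma_\alpha)$ and $\mathcal{E}_\alpha(f)<\infty$; the goal is an atomic decomposition with $\ell^1$-bound $\lesssim\|u\|_{L^1((0,\infty),\gamma_\alpha)}+\mathcal{E}_\alpha(f)$. First I would fix a bounded-overlap covering $\{Q_k\}_{k\ge0}$ of $(0,\infty)$ by intervals of $\mathcal{B}$, with a subordinate smooth partition of unity, chosen so that $Q_0=(0,1)$ and $Q_k=I(c_k,r_k)$ with $r_k\simeq m(c_k)$ for $c_k\ge3/4$; then a chain of consecutive $Q_k$'s joining $Q_k$ to $Q_0$ has length $N_k\simeq1+\int_1^{c_k}t\,dt\simeq1+c_k^2$. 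Next, perform a Calder\'on--Zygmund decomposition of $f$ over the level sets $\Omega_\lambda$ of a suitable lower semicontinuous auxiliary maximal function comparable in $L^1((0,\infty),\gamma_\alpha)$ to $u$, with the Whitney intervals taken inside $\mathcal{B}$, and telescope over dyadic levels as in \cite{MS}; this produces $f=\ell+\sum_{n,i}a_i^{(n)}$, where each $a_i^{(n)}$ is supported on $Q_i^{(n)}\in\mathcal{B}$, has $\gamma_\alpha$-mean zero and $\|a_i^{(n)}\|_\infty\lesssim2^n$, and $\sum_{n,i}2^n\gamma_\alpha(Q_i^{(n)})\lesssim\sum_n2^n\gamma_\alpha(\Omega_{2^n})\lesssim\|u\|_{L^1((0,\infty),\gamma_\alpha)}$; stopping the telescoping at the level $2^{n_0}\simeq\|u\|_{L^1((0,\infty),\gamma_\alpha)}$ yields a coarse piece $\ell$ with $\|\ell\|_\infty\lesssim\|u\|_{L^1((0,\infty),\gamma_\alpha)}$ and $\int\ell\,d\gamma_\alpha=\int f\,d\gamma_\alpha=:c_f$. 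The family $\{a_i^{(n)}\}$ already contributes an element of $\mathcal{H}^1((0,\infty),\gamma_\alpha)$ of norm $\lesssim\|u\|_{L^1((0,\infty),\gamma_\alpha)}$ (its $\mathcal{E}_\alpha$-component being controlled because $\mathcal{E}_\alpha(a_i^{(n)})\lesssim2^n\gamma_\alpha(Q_i^{(n)})$, by the atom estimate of the first part), so the task reduces to placing $\ell$ in $\mathcal{H}^1((0,\infty),\gamma_\alpha)$.

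To decompose $\ell$, write $\ell=\sum_k\mu_k\vartheta_k$ with $\vartheta_k\ge0$ a normalized bump on $Q_k$ ($\int\vartheta_k\,d\gamma_\alpha=1$, $\|\vartheta_k\|_\infty\lesssim\gamma_\alpha(Q_k)^{-1}$), so $|\mu_k|\le\|\ell\|_\infty\gamma_\alpha(Q_k)$, whence $\sum_k|\mu_k|\lesssim\|u\|_{L^1((0,\infty),\gamma_\alpha)}$ and $\sum_k\mu_k=c_f$. Subtracting the means, $\ell=c_f\,\mathbf{1}+\sum_k g_k$ with $g_k$ supported on $Q_k$, $\gamma_\alpha$-mean $\nu_k$, $\sum_k\nu_k=0$ and $\sum_k\|g_k\|_{L^1((0,\infty),\gamma_\alpha)}\lesssim\|u\|_{L^1((0,\infty),\gamma_\alpha)}+|c_f|$; peeling a genuine atom off each $g_k$ leaves $\sum_k\nu_k\vartheta_k$, which I would decompose by transporting mass towards $Q_0$: for adjacent $Q_k,Q_{k'}$ the difference $\vartheta_k-\vartheta_{k'}$ is a bounded multiple of an $(a,\infty,\alpha)$-atom, and an Abel summation along the chains rewrites $\sum_k\nu_k\vartheta_k$ as an atomic sum whose total $\ell^1$-weight is $\lesssim\int_2^\infty|S(t)|\,t\,dt+\|u\|_{L^1((0,\infty),\gamma_\alpha)}$, where $S(t):=\sum_{c_k>t}\nu_k$ is the flux at height $t$. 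Up to an error whose $\int_2^\infty(\cdot)\,t\,dt$ is $\lesssim\|u\|_{L^1((0,\infty),\gamma_\alpha)}$ one has $S(t)=\int_t^\infty\ell\,d\gamma_\alpha-c_f\gamma_\alpha((t,\infty))$; using $\ell-f=-\sum_{n,i}a_i^{(n)}$ with $\int_0^\infty\bigl|\int_t^\infty a_i^{(n)}\,d\gamma_\alpha\bigr|\,t\,dt=\mathcal{E}_\alpha(a_i^{(n)})\lesssim2^n\gamma_\alpha(Q_i^{(n)})$, the rapid decay of $\gamma_\alpha((t,\infty))$, and $|c_f|\lesssim\|u\|_{L^1((0,\infty),\gamma_\alpha)}$ (which follows by comparing $c_f$ with the decomposition just built: splitting $(0,\infty)$ at a fixed point, only the atoms straddling it survive the cancellation and their $L^1$-weights form a sub-sum of $\sum_{n,i}2^n\gamma_\alpha(Q_i^{(n)})$), one gets $\int_2^\infty|S(t)|\,t\,dt\lesssim\int_2^\infty\bigl|\int_t^\infty f\,d\gamma_\alpha\bigr|\,t\,dt+\|u\|_{L^1((0,\infty),\gamma_\alpha)}\lesssim\mathcal{E}_\alpha(f)+\|u\|_{L^1((0,\infty),\gamma_\alpha)}$. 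Collecting $c_f\,\mathbf{1}$, the atoms coming from the $g_k$'s, the transport atoms and the family $\{a_i^{(n)}\}$ — all the series converging in $L^1((0,\infty),\gamma_\alpha)$ — then gives $\|f\|_{\mathcal{H}^1((0,\infty),\gamma_\alpha)}\lesssim\|u\|_{L^1((0,\infty),\gamma_\alpha)}+\mathcal{E}_\alpha(f)$.

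The atom computations and the Fubini-type manipulations are routine. I expect the real work to lie in the Calder\'on--Zygmund step carried out with Whitney intervals constrained to $\mathcal{B}$ — in particular handling the region near $0$, where admissibility forces radii $\le x$ and the family is essentially scale-invariant — and in the bookkeeping of the transport argument, namely identifying the flux $S(t)$ with $\int_t^\infty\ell\,d\gamma_\alpha-c_f\gamma_\alpha((t,\infty))$ and absorbing every error term into $\|u\|_{L^1((0,\infty),\gamma_\alpha)}$; this last point is precisely where the auxiliary functional $\mathcal{E}_\alpha$ enters and cannot be dispensed with, since it measures the cost of moving the tail mass of $f$ to the origin along chains of admissible intervals.
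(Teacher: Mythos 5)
Your first half is fine: the uniform bounds $\|\mathcal{M}_{\alpha,\loc}(b)\|_{L^1((0,\infty),\gamma_\alpha)}+\mathcal{E}_\alpha(b)\le C$ for atoms are essentially the paper's estimates (your device of writing $\int b\phi\,d\mathfrak{m}_\alpha=c_\alpha^{-1}\int b(\psi-\psi(x_0))\,d\gamma_\alpha$ with $\psi=\phi e^{(\cdot)^2}$ is a clean variant), and your passage from atoms to general $f\in\mathcal{H}^1$ via countable subadditivity is legitimate here, because $\mathcal{M}_{\alpha,\loc}$ is a supremum of pairings against fixed $C^1_c$ functions, so $L^1(\gamma_\alpha)$-convergence of the atomic series gives $\mathcal{M}_{\alpha,\loc}(f)\le\sum_j|\lambda_j|\mathcal{M}_{\alpha,\loc}(b_j)$ pointwise; this replaces the paper's weak-$(1,1)$ argument (which it includes precisely to address Bownik-type pathologies) and is acceptable.

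The converse direction, however, has a genuine gap at its central step. You assert that a Calder\'on--Zygmund/Mac\'{\i}as--Segovia telescoping over the level sets $\Omega_{2^n}$ of an auxiliary maximal function, ``with the Whitney intervals taken inside $\mathcal{B}$'', produces mean-zero pieces $a_i^{(n)}$ supported on admissible intervals with $\|a_i^{(n)}\|_\infty\lesssim 2^n$ (and a coarse piece $\ell$ with $\|\ell\|_\infty\lesssim\|u\|_{L^1(\gamma_\alpha)}$). In the classical construction those $L^\infty$ bounds come from controlling the average of $f$ over each Whitney piece by the level $\lambda$, which is possible because a dilate of the Whitney cube reaches the complement of $\Omega_\lambda$, where the grand maximal function is $\le\lambda$, and test functions exist at every scale. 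Here the test functions (and hence the admissible Whitney intervals) live only at scales $\le m(x)$, while a component of $\Omega_\lambda$ can be far wider than $m$ --- e.g.\ it can contain a whole neighbourhood of infinity when $f$ is large far out --- so admissible intervals deep inside $\Omega_\lambda$ never see $\Omega_\lambda^c$ and their $\gamma_\alpha$-averages of $f$ are not bounded by $\lambda$. Consequently the claimed atoms and the bounded coarse piece $\ell$ do not follow from the cited machinery; and since your transport/flux argument (the analogue of the paper's Abel summation, which is where $\mathcal{E}_\alpha$ enters) is applied only to $\ell$, the whole second half rests on this unproved step. The paper avoids the obstruction differently: it localizes $f$ on the fixed admissible grid $I_j=(\sqrt{j-1},\sqrt{j+1})$ with a partition of unity, subtracts the local means $b_j$, proves (Lemma~\ref{L3}) that the \emph{global} maximal function of $(f-b_j)\eta_j\gamma$ is in $L^1(\mathfrak{m}_\alpha)$ with norm $\lesssim\int_{hI_j}\mathcal{M}_{\alpha,\loc}(f)\,d\gamma_\alpha$ --- the cancellation being used exactly on test intervals larger than $m(x)$ --- and then transfers to the doubling measure $\mathfrak{m}_\alpha$, where the Bessel Hardy space $H^1_\alpha$ atomic theory (Lemmas~\ref{L31}--\ref{L32}) supplies atoms supported in $2I_j$; the means $\sum_j b_j\eta_j$ are then decomposed by summation by parts with coefficients $\int f\mu_k\,d\gamma_\alpha$ controlled by $\mathcal{E}_\alpha(f)+\|f\|_{L^1(\gamma_\alpha)}$. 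To complete your route you would have to prove the admissibility-constrained CZ step, and doing so essentially forces you through the same local-to-global cancellation analysis as Lemma~\ref{L3}; as written, the proposal does not yet contain that argument.
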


In \cite{Dz} Dziuba\'{n}ski defined Hardy spaces associated with Laguerre function sequences that are basis in $L^2((0,\infty),dx)$. He introduced the admissibility function~$w(x)=\frac18 \min\left\{x,\frac1x\right\}$, $x\in (0,\infty)$ in order to define the atoms. Later, Cha and Li (\cite{ChL1} and \cite{ChL2}) used the function $w$ to define some $\BMO$-type spaces that can be identified with the duals of the Hardy spaces introduced in \cite{Dz}.

Hardy spaces $\mathcal{H}^1((0,\infty),\gamma_\alpha)$ can also be defined using the admissibility function~$w$
instead of the function $m$. The denominator 8 in the function $w$ is not important for us but we prefer to keep the notation given in \cite{ChL1}, \cite{ChL2} and \cite{Dz}.

If $a\in (0,8)$, we say that an interval $I(x,r)$ is in $\mathbb{B}_a$ when $x\in (0,\infty)$ and $0<r\leq aw(x)$. We write $\mathbb{B}:=\mathbb{B}_1$.

Let  $1<q\le \infty$ and $a\in (0,8)$. A measurable function $b$ in $(0,\infty)$ is said to be an \textbf{$(a,q,\alpha)_w$--atom} when $b(x)=1$ for every $x\in (0,\infty)$, or there exist $x,r\in (0,\infty)$ such that $I(x,r)\in \mathbb{B}_a$ and the following properties hold
\begin{enumerate}
    \item $\supp(b)\subset I(x,r)$;
    \item $\|b\|_{L^q((0,\infty),\gamma_\alpha)}\le \gamma_\alpha(I(x,r))^{\frac{1}{q}-1}$;
    \item $\int_0^\infty b(x)\ d\gamma_\alpha(x)=0$.
\end{enumerate}

We now define the atomic Hardy space $\mathbb{H}_a^{1,q}((0,\infty),\gamma_\alpha)$ as follows. We say that a measurable function $f$ defined on $(0,\infty)$ belongs to $\mathbb{H}_a^{1,q}((0,\infty),\gamma_\alpha)$ whenever~${f=\sum_{j=0}^\infty\lambda_jb_j}$, where, for every $j\in \mathbb{N}$, $b_j$ is an $(a,q,\alpha)_w$--atom and $\lambda_j\in \mathbb{C}$ being $\sum_{j=0}^\infty|\lambda_j|<\infty$. For every $f\in \mathbb{H}_a^{1,q}((0,\infty),\gamma_\alpha)$, we define
\[
\|f\|_{\mathbb{H}_a^{1,q}((0,\infty),\gamma_\alpha)}=\inf\sum_{j=0}^\infty|\lambda_j|,
\]
where the infimum is taken over all the sequences $\{\lambda_j\}_{j\in \mathbb{N}}$ of complex numbers for which $\sum_{j=0}^\infty|\lambda_j|<\infty$ and $f= \sum_{j=0}^\infty\lambda_jb_j$ with $(a,q,\alpha)_w$--atoms $b_j$, for every $j\in \mathbb{N}$.

In order to characterize this space by a local maximal function, we shall consider the class of functions $\mathbb{A}$ defined by 
\[
\mathbb{A}=\left\{\phi\in C_c^1(-1,1) :  \|\phi\|_\infty\le 1,\, \|\phi'\|_\infty\le 1\right\}. 
\]
For $\phi\in \mathbb{A}$, we write $\phi_t (x)=\frac{1}{t}\phi\left(\frac{x}{t}\right)$, for $x\in \mathbb{R}$  and $t\in  (0,\infty)$. Note that if $\phi\in \mathbb{A}$, then $\supp(\phi_t (x-\cdot ))\subset I(x,t)\subset (0,\infty)$ provided that $0<t\le x<\infty$. 

For every $a\in (0,8)$, we define the local maximal function $\mathbb{M}_{a,\loc}$ by
\[
\mathbb{M}_{a,\loc}(f)(x)=\sup\{|\phi_t \ast f(x)|:\,\phi\in \mathbb{A}, 0<t<aw(x)\},\quad x\in (0,\infty),
\]
for every $f\in L^1_{\loc}((0,\infty), dx)$, where by $\ast$ we denote the usual convolution in $\mathbb{R}$. 

We also introduce, for every $f\in L^1((0,\infty),\gamma_\alpha)$, the following quantity
\[
\mathbb{E}_\alpha(f)=\int_0^1\frac{1}{y}\left|\int_0^y f(x)\ d\gamma_\alpha(x)\right|dy+\int_1^\infty y\left|\int_y^\infty f(x) \ d\gamma_\alpha(x)\right|dy.
\]

We will prove that $\mathcal{H}^1((0,\infty),\gamma_\alpha)$ can also be characterized by means of $\mathbb{M}_{a,\loc}$ and $\mathbb{E}_\alpha$. Moreover, the atomic spaces  $\mathcal{H}^1((0,\infty),\gamma_\alpha)$ and $\mathbb{H}_a^{1,q}((0,\infty),\gamma_\alpha)$ coincide for any $a\in (0,1]$ and $1<q\leq \infty$.

\begin{thm}\label{equiv-atomicH1} Let $\alpha>-\frac12$, $a\in (0,1]$ and $1<q\le\infty$. For each $f\in L^1((0,\infty),\gamma_\alpha)$, the following statements are equivalent.
\begin{enumerate}[label=(\alph*)]
    \item \label{mathcalH1}  $f\in \mathcal{H}^1((0,\infty),\gamma_\alpha)$;
    \item \label{mathbbH1} $f\in \mathbb{H}_{2a}^{1,q}((0,\infty),\gamma_\alpha)$;
    \item \label{maxlocconv}$\mathbb{M}_{a,\loc}(f)\in L^1((0,\infty),\gamma_\alpha)$ and $\mathbb{E}_\alpha(f)<\infty$.
\end{enumerate}
Moreover, the quantities $\|f\|_{\mathcal{H}^1((0,\infty),\gamma_\alpha)}$, $\|f\|_{\mathbb{H}^{1,q}_a((0,\infty),\gamma_\alpha)}$ and \[\|\mathbb{M}_{a,\loc}(f)\|_{ L^1((0,\infty),\gamma_\alpha)}+\mathbb{E}_\alpha(f)\] are equivalent.
\end{thm}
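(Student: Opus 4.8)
I would prove the three statements equivalent via the cycle \ref{mathcalH1}$\Rightarrow$\ref{maxlocconv}$\Rightarrow$\ref{mathbbH1}$\Rightarrow$\ref{mathcalH1}, keeping track of constants so as to read off the stated equivalence of quantities; almost all the work lies in \ref{maxlocconv}$\Rightarrow$\ref{mathbbH1}. The implication \ref{mathbbH1}$\Rightarrow$\ref{mathcalH1} is immediate: since $a\le1$, for every $x$ one has $2aw(x)=\frac{a}{4}\min\{x,1/x\}\le\frac{a}{4}\min\{1,1/x\}=\frac{a}{4}m(x)$ and $2aw(x)\le\frac{a}{4}x\le x$, so $\mathbb{B}_{2a}\subset\mathcal{B}_{a/4}$ and every $(2a,q,\alpha)_w$--atom is an $(a/4,q,\alpha)$--atom; by Theorem~\ref{Th1.1} this yields $\mathbb{H}^{1,q}_{2a}((0,\infty),\gamma_\alpha)\hookrightarrow\mathcal{H}^{1,q}_{a/4}((0,\infty),\gamma_\alpha)=\mathcal{H}^1((0,\infty),\gamma_\alpha)$ continuously.

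For \ref{mathcalH1}$\Rightarrow$\ref{maxlocconv} I would first prove the pointwise bound $\mathbb{M}_{a,\loc}(f)\le C_{\alpha,a}\,\mathcal{M}_{\alpha,\loc}(f)$: given $\phi\in\mathbb{A}$ and $0<t<aw(x)$, for a suitable $C_{\alpha,a}$ the function $y\mapsto C_{\alpha,a}^{-1}t^{-1}\phi\big((x-y)/t\big)y^{-(2\alpha+1)}$ lies in $\mathcal{A}^\alpha_{x,\loc}$ — here $t<x$ gives $y^{2\alpha+1}\simeq x^{2\alpha+1}$ and $\mathfrak{m}_\alpha(I(x,t))\simeq tx^{2\alpha+1}$ on $I(x,t)$, while $t<am(x)$ places $I(x,t)$ in $\mathcal{B}$ — and it represents $C_{\alpha,a}^{-1}\,\phi_t\ast f(x)$ as an integral against $d\mathfrak{m}_\alpha$. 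Together with Theorem~\ref{H1-Malfaloc} this gives $\|\mathbb{M}_{a,\loc}(f)\|_{L^1(\gamma_\alpha)}\lesssim\|f\|_{\mathcal{H}^1((0,\infty),\gamma_\alpha)}$. As $\mathbb{E}_\alpha$ is subadditive it suffices to bound it on $(1,\infty,\alpha)$--atoms: for $b\equiv1$ this amounts to $\int_0^1 y^{-1}\gamma_\alpha((0,y))\,dy<\infty$ (where $\alpha>-1$, hence the assumption $\alpha>-\frac12$, enters) and $\int_1^\infty y\,\gamma_\alpha((y,\infty))\,dy<\infty$; for a genuine atom $b$ on $I(x_0,r)\in\mathcal{B}_1$ one uses $\int b\,d\gamma_\alpha=0$, $\|b\|_{L^1(\gamma_\alpha)}\le1$ and $I(x_0,r)\subset(0,2x_0)$ (from $r\le x_0$) to bound both terms of $\mathbb{E}_\alpha(b)$ by an absolute constant.

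For the crucial \ref{maxlocconv}$\Rightarrow$\ref{mathbbH1}: Lebesgue differentiation gives $|f|\lesssim\mathbb{M}_{a,\loc}(f)$ a.e., hence $\|f\|_{L^1(\gamma_\alpha)}\lesssim\|\mathbb{M}_{a,\loc}(f)\|_{L^1(\gamma_\alpha)}$, and subtracting the bounded multiple $\big(\int_0^\infty f\,d\gamma_\alpha\big)\cdot1$ of the constant atom we may assume $\int_0^\infty f\,d\gamma_\alpha=0$. Fix a Whitney covering $\{Q_j\}_j$ of $(0,\infty)$ by intervals with $\ell(Q_j)\simeq w(c_j)$ ($c_j$ the center), bounded overlap, consecutive ones adjacent, and fixed dilates $\widehat{Q_j}$ still in some $\mathbb{B}_c$, plus a subordinate partition of unity $\{\eta_j\}$ with $|\eta_j'|\lesssim\ell(Q_j)^{-1}$. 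On each $\widehat{Q_j}$ the measure $\gamma_\alpha$ is doubling; using $\psi_t\ast(f\eta_j)(x)=\eta_j(x)\,\psi_t\ast f(x)+\int\psi_t(x-y)(\eta_j(y)-\eta_j(x))f(y)\,dy$ and estimating the second term by $\ell(Q_j)^{-1}\int_{I(x,t)\cap Q_j}|f|\,dy\simeq\gamma_\alpha(Q_j)^{-1}\int_{Q_j}|f|\,d\gamma_\alpha$, one obtains, for the local grand maximal operator $\mathcal{N}_{\loc}$ of $(\widehat{Q_j},|\cdot|,\gamma_\alpha)$ at scale $\ell(Q_j)$, that $\mathcal{N}_{\loc}(f\eta_j)\lesssim\mathbb{M}_{a,\loc}(f)+\gamma_\alpha(Q_j)^{-1}\int_{Q_j}|f|\,d\gamma_\alpha$ on $\widehat{Q_j}$. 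By the maximal characterization and atomic decomposition of the local Hardy space on this space of homogeneous type, $f\eta_j=h_j+m_j\beta_j$, where $m_j=\int f\eta_j\,d\gamma_\alpha$, $\beta_j$ is a fixed bump on $Q_j$ with $\int\beta_j\,d\gamma_\alpha=1$ and $\|\beta_j\|_\infty\lesssim\gamma_\alpha(Q_j)^{-1}$, and $h_j$ (of zero $\gamma_\alpha$-mean, supported in $\widehat{Q_j}$) is a sum of $(c,q,\alpha)_w$--atoms with total $\ell^1$ coefficient $\lesssim\int_{\widehat{Q_j}}\mathbb{M}_{a,\loc}(f)\,d\gamma_\alpha+\int_{Q_j}|f|\,d\gamma_\alpha$; summing over $j$ with bounded overlap controls $\sum_j h_j$ by $\|\mathbb{M}_{a,\loc}(f)\|_{L^1(\gamma_\alpha)}$.

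It remains to resum $\sum_j m_j\beta_j$, for which $\sum_j m_j=\int_0^\infty f\,d\gamma_\alpha=0$ and $\sum_j|m_j|\le\|f\|_{L^1(\gamma_\alpha)}<\infty$. Telescoping along a chain of the $Q_j$ running from $0$ to $\infty$, $\sum_j m_j\beta_j=\sum_j\sigma_j(\beta_j-\beta_{j-1})+G$, where $\sigma_j$ is a partial sum of the $m_j$, each $\beta_j-\beta_{j-1}$ (zero $\gamma_\alpha$-mean, supported in $Q_{j-1}\cup Q_j$) is a multiple of a $(c',q,\alpha)_w$--atom, and $G$ is a finite combination of bumps supported in a fixed compact subinterval of $(0,\infty)$ with $\int G\,d\gamma_\alpha=0$, hence a finite $w$--atomic combination. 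For the $Q_j$ accumulating at $0$, $|\sigma_j|\lesssim\big|\int_0^{\ell_j}f\,d\gamma_\alpha\big|+\int_{Q_{j-1}}|f|\,d\gamma_\alpha$ with $\ell_j\to0$ geometrically, and comparing $\sum_j\big|\int_0^{\ell_j}f\,d\gamma_\alpha\big|$ with $\int_0^1 y^{-1}\big|\int_0^y f\,d\gamma_\alpha\big|\,dy$ (and near $\infty$, $\sum_j\big|\int_{T_j}^\infty f\,d\gamma_\alpha\big|$ with $\int_1^\infty y\big|\int_y^\infty f\,d\gamma_\alpha\big|\,dy$) yields $\sum_j|\sigma_j|\lesssim\mathbb{E}_\alpha(f)+\|f\|_{L^1(\gamma_\alpha)}$, the finitely many bulk intervals contributing $\lesssim\|f\|_{L^1(\gamma_\alpha)}$. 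Passing from $(c,q,\alpha)_w$-- to $(2a,q,\alpha)_w$--atoms via the $a$--independence of $\mathbb{H}^{1,q}_a((0,\infty),\gamma_\alpha)$ (established as in Theorem~\ref{Th1.1}) closes the cycle; tracking the constants around it gives the asserted equivalence. I expect the main difficulty to be this last implication: the local estimate on $\mathcal{N}_{\loc}(f\eta_j)$ and the use of local $H^1$ theory on each doubling piece $\widehat{Q_j}$ — reconciling the globally defined $w$--truncated maximal function with an intrinsic grand maximal function — together with the bookkeeping near $0$ and near $\infty$, where infinitely many Whitney intervals accumulate and the two summands of $\mathbb{E}_\alpha$ are exactly what is needed.
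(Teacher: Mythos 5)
Your cycle \ref{mathcalH1}$\Rightarrow$\ref{maxlocconv}$\Rightarrow$\ref{mathbbH1}$\Rightarrow$\ref{mathcalH1} is exactly the paper's, and two of the three implications coincide with its argument: \ref{mathbbH1}$\Rightarrow$\ref{mathcalH1} via $\mathbb{B}_{2a}\subset\mathcal{B}_{a/4}$ and Theorem~\ref{Th1.1}, and \ref{mathcalH1}$\Rightarrow$\ref{maxlocconv} via the same pointwise comparison $\mathbb{M}_{a,\loc}(f)\le C\,\mathcal{M}_{\alpha,\loc}(f)$ (the kernel $t^{-1}y^{-2\alpha-1}\phi((x-y)/t)$ lies, up to a constant, in $\mathcal{A}^\alpha_{x,\loc}$) together with Theorem~\ref{H1-Malfaloc} and an atom-wise bound for $\mathbb{E}_\alpha$. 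For \ref{maxlocconv}$\Rightarrow$\ref{mathbbH1} your skeleton is again the paper's (admissible Whitney intervals, partition of unity with $|\eta_j'|\lesssim w(c_j)^{-1}$, subtraction of local means, Abel summation of the mean parts, with the telescoped bumps being $w$-atoms and their coefficients controlled by $\mathbb{E}_\alpha(f)+\|f\|_{L^1((0,\infty),\gamma_\alpha)}$, the two halves of $\mathbb{E}_\alpha$ serving the intervals accumulating at $0$ and at $\infty$). Where you genuinely differ is the local step: the paper multiplies $(f-b_j)\eta_j$ by the density $\gamma_\alpha$, proves the product lies in the classical $H^1(\mathbb{R},dx)$ --- which obliges it to estimate also $\sup_{t>aw(x)}|\phi_t\ast((f-b_j)\eta_j\gamma_\alpha)|$ using the cancellation --- and then applies \cite[Lemma~2.1]{MMS-max} to get classical atoms supported in $2\mathbb{I}_j$, which divided by $\gamma_\alpha$ are directly $(2a,\infty,\alpha)_w$-atoms; you instead keep the measure $\gamma_\alpha$ on each dilated interval viewed as a doubling space of homogeneous type and invoke grand-maximal/atomic theory of the \emph{local} Hardy space there, splitting off the mean $m_j\beta_j$ by hand. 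Your route avoids the large-scale maximal estimate but buys the burden of the local $H^1$ theory on a space of homogeneous type and of comparing $\mathbb{M}_{a,\loc}$ with the intrinsic grand maximal function --- precisely the point you leave as a black box and the paper works out explicitly --- and it also needs the extra appeal to $a$-independence of $\mathbb{H}^{1,q}_a$, which the paper does not, since its atoms come out with the right admissibility parameter.

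One concrete slip in \ref{mathcalH1}$\Rightarrow$\ref{maxlocconv}: you claim both terms of $\mathbb{E}_\alpha(b)$ are bounded using only $\int_0^\infty b\,d\gamma_\alpha=0$, $\|b\|_{L^1((0,\infty),\gamma_\alpha)}\le1$ and the support. That suffices for $\int_1^\infty y\left|\int_y^\infty b\,d\gamma_\alpha\right|dy$ (the paper's $\mathcal{E}_\alpha$ estimate), but not for $\int_0^1\frac1y\left|\int_0^y b\,d\gamma_\alpha\right|dy$: with only those properties the bound degenerates to $\log\frac{x_0+r_0}{x_0-r_0}$, which is unbounded when $r_0$ is close to $x_0$, and a function with those three properties but $\gamma_\alpha$-mass $\tfrac12$ concentrated near $x_0e^{-M}$ and $-\tfrac12$ near $x_0$ makes that term of order $M$. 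You must use the size normalization $\|b\|_\infty\le\gamma_\alpha(I)^{-1}$, which gives $\left|\int_0^y b\,d\gamma_\alpha\right|\lesssim\gamma_\alpha((0,y))/\gamma_\alpha(I)\lesssim y^{2\alpha+2}\gamma(x_0)/\gamma_\alpha(I)$ and kills the $1/y$ singularity, exactly as in the paper; with that correction the step is immediate, so the gap is easily repaired.
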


\begin{rem} From the proof of this theorem (see Section~\ref{sec-equiv-atomiH1}) we shall deduce that the equivalence of properties \ref{mathbbH1} and \ref{maxlocconv} still holds for every $\alpha>-1$.
\end{rem}
\begin{rem}
    The independence of the parameters $a\in(0,1]$ and $1<q\leq\infty$ in the space $\mathbb{H}_{a}^{1,q}((0,\infty),\gamma_\alpha)$ follows from Theorem~\ref{equiv-atomicH1}.
\end{rem}

We now give some definitions and basic properties about Laguerre polynomial expansions and the heat semigroup generated by the Laguerre operator.

Let $\alpha>-\frac12$. 
For every $k\in \mathbb{N}$, the Laguerre polynomial $L_k^\alpha$ of order $\alpha$ and degree~$k$ is defined (see \cite{Leb}) by 
\[
L_k^\alpha(x)=\sqrt{\frac{\Gamma(\alpha+1)}{\Gamma(\alpha+k+1)k!}}e^xx^{-\alpha}\frac{d^k}{dx^k}\left(e^{-x}x^{k+\alpha}\right),\quad x\in (0,\infty).
\]
We consider the Laguerre operator $\widetilde {\Delta}_\alpha$ given by
\[
\widetilde {\Delta}_\alpha(f)(x)=-\frac{1}{4}\left(\frac{d^2}{dx^2}+\left(\frac{2\alpha+1}{x}-2x\right)\frac{d}{dx}\right)f(x),\quad f\in C^2(0,\infty).
\]
By setting, for every $k\in \mathbb{N}$, $\mathcal{L}_k^\alpha(x)=L_k^\alpha(x^2)$, $x\in (0,\infty)$, the sequence
$\{\mathcal{L}_k^\alpha\}_{k\in \mathbb{N}}$ is an orthonormal basis in $L^2((0,\infty),\gamma_\alpha)$. Furthermore, $\widetilde {\Delta}_\alpha\mathcal{L}_k^\alpha=k\mathcal{L}_k^\alpha$,
$k\in \mathbb{N}$.

For every $f\in L^2((0,\infty),\gamma_\alpha)$, we define
\[
c_k^\alpha(f)=\int_0^\infty f(x)\mathcal{L}_k^\alpha(x)d\gamma_\alpha(x),\quad \,k\in \mathbb{N}.
\]
We consider the operator $\Delta_\alpha$ given by
\[
\Delta_\alpha(f)=\sum_{k=0}^\infty kc_k^\alpha(f)\mathcal{L}_k^\alpha, \quad \,f\in D(\Delta_\alpha),
\]
where \[D(\Delta_\alpha)=\left\{f\in L^2((0,\infty),\gamma_\alpha): \sum_{k=0}^\infty|kc_k^\alpha(f)|^2<\infty\right\}.\]

Thus $\Delta_\alpha$ is an extension on $L^2((0,\infty),\gamma_\alpha)$ of $\widetilde{\Delta}_\alpha$ firstly defined on $C_c^\infty(0,\infty)$ (the space of smooth functions with compact support in $(0,\infty)$). The operator $\Delta_\alpha$ is self-adjoint and positive and, moreover, $-\Delta_\alpha$ generates a semigroup of operators $\{W_t^\alpha\}_{t>0}$ in $L^2((0,\infty),\gamma_\alpha)$ where, for every $t>0$, that
\[
W_t^\alpha(f)=\sum_{k=0}^\infty e^{-kt}c_k^\alpha(f)\mathcal{L}_k^\alpha,\quad f\in L^2((0,\infty),\gamma_\alpha).
\]
By using the Hille-Hardy formula (\cite[(4.17.6)]{Leb})
we can write, for every $x,y,t>0$,
\begin{align*}
\sum_{k=0}^\infty e^{-kt}\mathcal{L}_k^\alpha(x)\mathcal{L}_k^\alpha(y)&=\frac{\Gamma(\alpha+1)}{1-e^{-t}}(e^{-t/2}xy)^{-\alpha}I_\alpha\left(\frac{2e^{-t/2}xy}{1-e^{-t}}\right)\\
&\quad \times\exp\left(-\frac{e^{-t}}{1-e^{-t}}(x^2+y^2)\right),    
\end{align*}
where $I_\alpha$ denotes the modified Bessel function of the first kind and order $\alpha$.

We get, for every $f\in L^2((0,\infty),\gamma_\alpha)$ and $t>0$,
\begin{equation}\label{sem}
W_t^\alpha(f)(x)=\int_0^\infty W_t^\alpha(x,y)f(y)d\gamma_\alpha(y),\quad \,x\in (0,\infty),
\end{equation}
being, for every $x,y,t>0$,
\[
W_t^\alpha(x,y)=\frac{\Gamma(\alpha+1)}{1-e^{-t}}(e^{-t/2}xy)^{-\alpha}I_\alpha\left(\frac{2e^{-t/2}xy}{1-e^{-t}}\right)\exp\left(-\frac{e^{-t}}{1-e^{-t}}(x^2+y^2)\right).
\]
The integral in \eqref{sem} is absolutely convergent for every $f\in L^p((0,\infty),\gamma_\alpha)$ and every~$1\le p<\infty$. Moreover, by defining, for every $t>0$, $W_t^\alpha$ on $L^p((0,\infty),\gamma_\alpha)$ with~$1\le p<\infty$, by \eqref{sem} $\{W_t^\alpha\}_{t>0}$ is a symmetric diffusion semigroup in the sense of Stein (\cite{StLP}).

The maximal operator defined by $\{W_t^\alpha\}_{t>0}$ is given by
\[
W_*^\alpha(f)=\sup_{t>0}|W_t^\alpha(f)|.
\]
It is known that $W_{*}^\alpha$ is bounded on $L^p((0,\infty),\gamma_\alpha)$, for every~${1<p<\infty}$ (see \cite[p.~73]{StLP}). Furthermore, from the  Muckenhoupt's results (\cite{Mu2}) it can be deduced that $W_*^\alpha$ is bounded from $L^1((0,\infty),\gamma_\alpha)$ into $L^{1,\infty}((0,\infty),\gamma_\alpha)$.

Suggested by the results in \cite{MVNP} and \cite{Po} we consider the truncated maximal operator $\mathbb{W}_*^\alpha$ defined by
\[
\mathbb{W}_*^\alpha(f)=\sup_{0<t<m(x)^2}|W_t^\alpha(f)|.
\]
Since $\mathbb{W}_*^\alpha(f)\le W_*^\alpha(f)$, $\mathbb{W}_*^\alpha$ is also bounded on $L^p((0,\infty),\gamma_\alpha)$, for every $1<p<\infty$, and from $L^1((0,\infty),\gamma_\alpha)$ into $L^{1,\infty}((0,\infty),\gamma_\alpha)$. Furthermore, we can prove the following endpoint estimate for $\mathbb{W}_*^\alpha$.

\begin{thm}\label{ThH} For any $\alpha\geq0$, the operator $\mathbb{W}_*^\alpha$ is bounded from $\mathcal{H}^1((0,\infty),\gamma_\alpha)$ into $L^1((0,\infty),\gamma_\alpha)$.
\end{thm}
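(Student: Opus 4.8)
The plan is to verify the $\mathcal{H}^1$--$L^1$ boundedness atom by atom, using the atomic decomposition with $(1,\infty,\alpha)$--atoms (by Theorem~\ref{Th1.1}, the space and norm are independent of $a$ and $q$, so it suffices to work with, say, $a=1$ and $q=\infty$). Since $\mathbb{W}_*^\alpha$ is sublinear and the atomic series converges in $L^1((0,\infty),\gamma_\alpha)$, and $\mathbb{W}_*^\alpha$ is already known to be bounded $L^1\to L^{1,\infty}$, it is enough to prove that $\|\mathbb{W}_*^\alpha b\|_{L^1((0,\infty),\gamma_\alpha)}\le C$ with $C$ independent of the atom $b$; the case $b\equiv 1$ is immediate because $W_t^\alpha 1=1$ for all $t$, so $\mathbb{W}_*^\alpha 1\le 1\in L^1(\gamma_\alpha)$. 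For a genuine atom supported in $I=I(x_0,r)\in\mathcal{B}$ (so $0<r\le m(x_0)=\min\{1,1/x_0\}$), I would split
\[
\int_0^\infty \mathbb{W}_*^\alpha b \, d\gamma_\alpha
= \int_{I^{**}} \mathbb{W}_*^\alpha b \, d\gamma_\alpha + \int_{(0,\infty)\setminus I^{**}} \mathbb{W}_*^\alpha b \, d\gamma_\alpha,
\]
where $I^{**}=I(x_0,\kappa r)$ for a fixed dilation constant $\kappa$ (say $\kappa=2$ or $4$), still comparable to an admissible interval.

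For the \textbf{local part}, I would use the $L^2(\gamma_\alpha)$-boundedness of $\mathbb{W}_*^\alpha$ (which follows from Stein's theory, as noted in the excerpt), Cauchy--Schwarz, and the size estimate $\|b\|_{L^2(\gamma_\alpha)}\le\gamma_\alpha(I)^{-1/2}$ together with the doubling property of $\gamma_\alpha$ on $\mathcal{B}$:
\[
\int_{I^{**}}\mathbb{W}_*^\alpha b\, d\gamma_\alpha
\le \gamma_\alpha(I^{**})^{1/2}\|\mathbb{W}_*^\alpha b\|_{L^2(\gamma_\alpha)}
\le C\,\gamma_\alpha(I^{**})^{1/2}\|b\|_{L^2(\gamma_\alpha)}
\le C\,\frac{\gamma_\alpha(I^{**})^{1/2}}{\gamma_\alpha(I)^{1/2}}\le C.
\]
For the \textbf{global part}, the cancellation of $b$ is used: for $x\notin I^{**}$,
\[
W_t^\alpha b(x)=\int_I \big(W_t^\alpha(x,y)-W_t^\alpha(x,x_0)\big) b(y)\, d\gamma_\alpha(y),
\]
so $|W_t^\alpha b(x)|\le \|b\|_{L^1(\gamma_\alpha)}\sup_{y\in I}|W_t^\alpha(x,y)-W_t^\alpha(x,x_0)|\le \sup_{y\in I}|W_t^\alpha(x,y)-W_t^\alpha(x,x_0)|$ since $\|b\|_{L^1(\gamma_\alpha)}\le 1$. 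Writing $W_t^\alpha(x,y)-W_t^\alpha(x,x_0)=\int_{x_0}^y \partial_u W_t^\alpha(x,u)\,du$, the task reduces to proving the Calderón--Zygmund-type gradient bound
\[
\int_0^{m(x)^2}\!\!\!\int_{(0,\infty)\setminus I^{**}} \Big(\sup_{u\in I}\big|\partial_u W_t^\alpha(x,u)\big|\Big)\, r\, d\gamma_\alpha(x)\;\frac{dt}{?}\le C,
\]
more precisely $\int_{(0,\infty)\setminus I^{**}}\sup_{0<t<m(x)^2}\sup_{u\in I}|\partial_u W_t^\alpha(x,u)|\,d\gamma_\alpha(x)\le C/r$. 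To get this I would use the explicit Hille--Hardy kernel and the standard asymptotics of $I_\alpha$ (namely $I_\alpha(z)\sim c z^\alpha$ as $z\to 0^+$ and $I_\alpha(z)\sim (2\pi z)^{-1/2}e^z$ as $z\to\infty$), differentiate in $u$, and bound the result in terms of $|x-u|$, $t$, and the heat-kernel Gaussian factor $\exp(-c|x-u|^2/t)$ type decay that is available on the relevant range; the restriction $t<m(x)^2$ and $r\le m(x_0)$, together with $|x-x_0|\ge(\kappa-1)r$, is exactly what makes the $u$- and $t$-suprema integrable against $\gamma_\alpha$ over the complement of $I^{**}$.

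The \textbf{main obstacle} will be the kernel estimates for $\partial_u W_t^\alpha(x,u)$ that are needed in the global part: because $\gamma_\alpha$ is non-doubling and the operator is not Calderón--Zygmund globally, one must exploit the truncation $t<m(x)^2$ carefully, split into the regimes where the Bessel argument $2e^{-t/2}xu/(1-e^{-t})$ is small versus large, and in each regime extract enough decay in $|x-u|/\sqrt t$ (and enough control of the $\gamma_\alpha$-weight $x^{2\alpha+1}e^{-x^2}$ relative to $u^{2\alpha+1}$ on $I$) to close the integral uniformly in the atom. This is where the hypothesis $\alpha\ge 0$ is presumably used, to control the factor $(xu)^{-\alpha}I_\alpha$ and its $u$-derivative near the origin and to keep the weight comparisons clean; I would isolate this as a technical lemma on pointwise bounds for $W_t^\alpha(x,u)$ and $\partial_u W_t^\alpha(x,u)$ valid for $0<t<m(x)^2$, and then the integration against $d\gamma_\alpha(x)$ over $\{|x-x_0|\ge\kappa r\}$ becomes a routine (if lengthy) computation splitting into $|x-x_0|\le 1$ and $|x-x_0|>1$, using $r\le m(x_0)$ throughout.
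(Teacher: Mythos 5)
Your overall architecture (reduction to uniform estimates on $(1,\infty,\alpha)$-atoms via the weak type $(1,1)$ bound, the trivial case $b\equiv 1$ using $W_t^\alpha 1=1$, and the near-part estimate over a dilate of $I$ by Cauchy--Schwarz and the $L^2(\gamma_\alpha)$-boundedness of $\mathbb{W}_*^\alpha$) matches the paper. The genuine gap is in the far region. You propose to close it by cancellation plus a Calder\'on--Zygmund-type gradient bound for the \emph{full} truncated kernel, with ``$\exp(-c|x-u|^2/t)$-type'' decay, deferred to a technical lemma. No such estimate is available for the full kernel: written against $d\gamma_\alpha$, the kernel carries the factor $e^{y^2}$, and in the global region (where $\sqrt{q(x,y,s)}\gtrsim (1+x+y)^{-1}$) the relevant exponent $-q(e^{-t/2}x,y,s)/(1-e^{-t})+y^2-x^2$ need not exhibit Gaussian decay in $|x-u|/\sqrt t$; what makes that region integrable is solely the truncation $t<m(x)^2$, not the factor $r$ gained from the mean value theorem, and not a bound of the form $C\,|x-u|^{-1}\mathfrak{m}_\alpha(I(x,|x-u|))^{-1}$. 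In other words, the decisive estimate of your program is precisely the part you have not proved, and it is not true in the form you assume.

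The paper handles this by splitting the kernel itself (through a cutoff $\psi$ in the variables $(x,y,s)$ adapted to $\sqrt{q(x,y,s)}\le\delta/(1+x+y)$) into a local and a global part. The gradient estimate you want, $\sup_{0<t\le m(x)^2}|\partial_yK_t^\alpha(x,y)|\le C\,|x-y|^{-1}\mathfrak{m}_\alpha(I(x,|x-y|))^{-1}$, is quoted from \cite{BDQS} only for the \emph{localized} kernel $K_t^\alpha$ and is used exactly as you intend (with the atom's cancellation), but only for $\mathbb{W}^\alpha_{*,\loc}$. The global part is treated with no cancellation at all: the paper proves the stronger fact that $\mathbb{W}^\alpha_{*,\glob}$ is bounded on all of $L^1((0,\infty),\gamma_\alpha)$, first for $\alpha=\frac{k}{2}-1$, $k\ge 2$, by dominating the Laguerre global kernel by the Ornstein--Uhlenbeck kernel on $\mathbb{R}^k$ restricted to $|\overline{x}-\overline{y}|\ge m(x)$ (so that \cite[Proposition~2.4~(i)]{Po} applies), and then for all $\alpha\ge 0$ by Stein's complex interpolation applied to the analytic family $S^z_\omega$ obtained by linearizing the maximal operator with a measurable time-selection function $\omega$. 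This also corrects your guess about where $\alpha\ge 0$ enters: it is the interpolation range $\Real z\ge 0$ anchored at the half-integers $k/2-1$, not a condition used to control $(xu)^{-\alpha}I_\alpha$ near the origin. To salvage your route you would need, besides the local-kernel gradient bound, a direct uniform estimate such as $\sup_{u\in I}\int_{\{|x-x_0|\ge c\,m(x_0)\}}\sup_{0<t<m(x)^2}W_{t,\glob}^\alpha(x,u)\,d\gamma_\alpha(x)\le C$ for general $\alpha$, which is a nontrivial global kernel estimate and is exactly what the paper avoids proving directly.
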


Two open questions related to our results are worth noting:
\begin{enumerate}[label=(\arabic*)]
    \item Is the maximal operator $W_*^\alpha$ bounded from the Hardy space $\mathcal{H}^1((0,\infty),\gamma_\alpha)$ into $L^1((0,\infty),\gamma_\alpha)$?
    \item  Can the Hardy space $\mathcal{H}^1((0,\infty),\gamma_\alpha)$ be characterized by using the maximal operator $\mathbb{W}_*^\alpha$?
\end{enumerate}
As far as we know, these two questions have not been resolved even in the Ornstein-Uhlenbeck setting (see \cite{MVNP} and \cite{Po}).

We will now present the proofs for our Theorems \ref{H1-Malfaloc}, \ref{equiv-atomicH1} and \ref{ThH} in the following sections.

Throughout this paper, by $C$ and $c$ we will always denote positive constants that may change in each occurrence. We will use many times the following properties (see, for instance, \cite[Section~1.2]{U}) without mentioning them: for every $a>0$ there exists $C_a>1$ such that ${\frac{1}{C_a}m(x)\le m(y)\le C_am(x)}$ and $\frac{1}{C_a}\gamma(x)\le \gamma(y)\le C_a\gamma(x)$ provided that $|x-y|\le am(x)$, where $\gamma(x)=e^{-|x|^2}$ for $x\in (0,\infty)$.

\section{Proof of Theorem \ref{H1-Malfaloc}.} \label{sec-H1-Malfaloc}

We now establish some properties that we will use in the proof of Theorem \ref{H1-Malfaloc}.

\begin{lem}\label{L1} Let $\alpha>-\frac12$, $x_0>0$ and $0<r_0\le \min\{x_0, m(x_0)\}$.

Suppose that~$f\in L^1((0,\infty),\gamma_\alpha)$ with $\supp(f)\subset I(x_0,r_0)$. Then, the support of $\mathcal{M}_{\alpha,\loc}(f)$ is contained in $I(x_0, hm(x_0))=(x_0-hm(x_0),x_0+hm(x_0))\cap (0,\infty)$, for a certain $h> 1$ which does not depend on $f$, $x_0$ or $r_0$.
\end{lem}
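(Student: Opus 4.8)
The plan is to exploit the fact that membership of $\phi$ in $\mathcal{A}_{x,\loc}^\alpha$ forces $\phi$ to be supported in an admissible interval $I(x,r)$ with $r\le m(x)$, and then to use the equivalence of admissibility functions at nearby points. Suppose $x\in (0,\infty)$ with $\mathcal{M}_{\alpha,\loc}(f)(x)>0$. Then there exists $\phi\in\mathcal{A}_{x,\loc}^\alpha$ with $\int_0^\infty f\phi\, d\mathfrak{m}_\alpha\ne 0$; in particular $\supp(\phi)\subset I(x,r)$ for some $0<r\le\min\{x,m(x)\}$, and $\supp(\phi)\cap\supp(f)\ne\emptyset$. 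Since $\supp(f)\subset I(x_0,r_0)$, this means $I(x,r)\cap I(x_0,r_0)\ne\emptyset$, so there exist points $y\in I(x,r)$ and $z\in I(x_0,r_0)$ with $|y-z|$ small. From $r\le m(x)$ we get $|x-y|\le m(x)$, and from $r_0\le m(x_0)$ we get $|x_0-z|\le m(x_0)$.

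Next I would chain these estimates to bound $|x-x_0|$. Write $|x-x_0|\le |x-y|+|y-z|+|z-x_0|$. Here $|x-y|\le m(x)$ and $|z-x_0|\le m(x_0)$, and $|y-z|$ can be taken arbitrarily small (it is $0$ if the closed supports merely touch, but in any case $|y-z|<\varepsilon$ for any $\varepsilon>0$ by choosing $y,z$ appropriately in the intersecting intervals, or one simply uses $|y-z|=0$ at a common point of the closures). So $|x-x_0|\le m(x)+m(x_0)+\varepsilon$. Now apply the quoted comparison property of $m$: since $|x-y|\le m(x)$, there is a constant $C_1>1$ (the constant $C_a$ with $a=1$ from the remark at the end of the introduction) such that $m(x)\le C_1 m(y)$. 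Similarly, $|x_0-z|\le m(x_0)$ gives $m(x_0)\le C_1 m(z)$, hence $m(z)\le C_1 m(x_0)$ as well. The remaining task is to relate $m(y)$ and $m(z)$, using that $y$ and $z$ are close to each other and both are within a controlled distance of $x_0$.

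The cleanest route is to first establish that $x$ itself stays within a fixed multiple of $m(x_0)$ of $x_0$. Observe that $|x-x_0|\le m(x)+m(x_0)+\varepsilon \le C_1 m(x_0)(1+o(1))$ once we know $m(x)\lesssim m(x_0)$; to get the latter without circularity, note that $x$ and $x_0$ are joined by a short chain of points each within one admissibility radius of the next (namely $x\to y\to z\to x_0$, where $|y-z|$ is negligible), and the comparison property is stable under a bounded number of such steps. Concretely, $|x-x_0|\le m(x)+m(x_0)$ together with $m(x)\le C_1 m(y)\le C_1 C_1 m(z)$ (valid since $|y-z|$ is as small as we like, in particular $\le m(y)$) and $m(z)\le C_1 m(x_0)$ yields $m(x)\le C_1^3 m(x_0)$, hence $|x-x_0|\le (C_1^3+1)m(x_0)$. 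Thus $\mathcal{M}_{\alpha,\loc}(f)(x)=0$ whenever $|x-x_0|>hm(x_0)$ with $h:=C_1^3+1>1$, which is the desired conclusion, and $h$ depends only on the universal comparison constant $C_1$ (hence only on $\alpha$, and in fact not even on that), not on $f$, $x_0$, or $r_0$.

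The main obstacle is purely bookkeeping: making the chaining of the $m$-comparison estimates rigorous when $|y-z|$ is allowed to be positive but arbitrarily small, and confirming that the resulting constant $h$ genuinely has no dependence on $x_0$ or $r_0$. There is no analytic difficulty — no use of the measure $\gamma_\alpha$ or the structure of the operator is needed, only the support condition in the definition of $\mathcal{A}_{x,\loc}^\alpha$ and the doubling-type stability of $m$ — so once the constants are tracked carefully the proof is complete.
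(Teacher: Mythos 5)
Your proposal is correct and follows essentially the same route as the paper: if $\mathcal{M}_{\alpha,\loc}(f)(x)\neq 0$ then some $\phi\in\mathcal{A}_{x,\loc}^\alpha$ has support meeting $I(x_0,r_0)$, and a common point $z$ together with the stability of $m$ under displacements of size $m(\cdot)$ gives $m(x)\sim m(z)\sim m(x_0)$ and hence $|x-x_0|\le m(x)+m(x_0)\le h\,m(x_0)$ with a universal $h$. Your detour through two nearby points $y,z$ and an $\varepsilon$ is unnecessary (the intersection of the supports already contains a genuine common point, as the paper uses), but it is harmless and the constant tracking is sound.
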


\begin{proof} We fix $x>0$ and take $\phi\in \mathcal{A}_{x,\loc}^\alpha$ with $\supp(\phi)\subset I(x,r)$, for some ${0<r\leq \min\{x,m(x)\}}$. Clearly, we can assume $\supp(\phi)\cap I(x_0,r_0)\neq\emptyset$ since, otherwise, $\int_0^\infty \phi(y)f(y)y^{2\alpha+1}dy=0$. Choosing $z\in \supp(\phi)\cap I(x_0,r_0)$, it follows that
$m(x)\sim m(z)\sim m(x_0)$, from which yields that $x\in I(x_0, hm(x_0))$, for some $h> 1$. Thus, we have proved that $\supp(\mathcal{M}_{\alpha,\loc}(f))\subset  I(x_0, hm(x_0))$.
\end{proof}

We consider, for every $j\in \mathbb{N}\setminus\{0\}$, $I_j=(\sqrt{j-1},\sqrt{j+1})$ and $I_0=(0,1)$. The center and the radius of $I_j$ are $c_j=\frac{1}{2}(\sqrt{j+1}+\sqrt{j-1})$ and $r_j=1/(2c_j)$, respectively, for every $j\in \mathbb{N}\setminus\{0\}$, and the center and radius of $I_0$ are $c_0=1/4$ and $r_0=1/4$, respectively. Then, $I_j\in \mathcal{B}$ for each $j\in \mathbb{N}$. We also define $\mathcal{I}_j=I_j$,  $\mathcal{I}_{-j}=(-\sqrt{j+1},-\sqrt{j-1})$, $j\in \mathbb{N}\setminus\{0\}$, and $\mathcal{I}_0=(-1,1)$. We consider a partition of unity $\{\hat{\eta}_j\}_{j\in \mathbb{N}}$ associated with  $\{\mathcal{I}_j\}_{j\in \mathbb{Z}}$ (see \cite[p.~1685]{MMS-max}). We define $\eta_j=\hat{\eta}_j$, $j \in \mathbb{N}$, $j\ge 1$, and $\eta_0=\left.(\hat{\eta}_0)\right|_{I_0}$.

\begin{lem}\label{L2} Let $\alpha>-\frac12 
$. There exist $h,C>0$ such that, for every $x\in (0,\infty)$ and $j\in \mathbb{N}$, and any $g\in L^1((0,\delta),\gamma_\alpha)$, for each $\delta>0$,
\[
\mathcal{M}_{\alpha,\loc}(g\eta_j\gamma)(x)\le C\gamma(c_j)\mathcal{M}_{\alpha,\loc}(g)(x)\chi_{hI_j}(x).
\]
\end{lem}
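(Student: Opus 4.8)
The plan is to analyze when a test function $\phi \in \mathcal{A}_{x,\loc}^\alpha$ interacts nontrivially with $g\eta_j\gamma$, and to show that the factor $\gamma(y)$ appearing on the support is comparable to the constant $\gamma(c_j)$, so that it can be pulled out.

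First I would fix $x \in (0,\infty)$ and $j \in \mathbb{N}$, and take $\phi \in \mathcal{A}_{x,\loc}^\alpha$ with $\supp(\phi)\subset I(x,r)$ for some $0<r\le \min\{x,m(x)\}$. If $\supp(\phi)\cap I_j = \emptyset$, then $\int_0^\infty g(y)\eta_j(y)\gamma(y)\phi(y)\,d\mathfrak{m}_\alpha(y)=0$ since $\supp(\eta_j)\subset I_j$, and there is nothing to prove. So assume there is a point $z \in \supp(\phi)\cap I_j$. Since $r \le m(x)$ and $z \in I(x,r)$, the standard comparability properties quoted at the end of the introduction give $m(x)\sim m(z)$ and, because $z \in I_j$ and $I_j \in \mathcal{B}$, also $m(z)\sim m(c_j)\sim r_j$; likewise $\gamma(z)\sim \gamma(c_j)$. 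From $z \in I(x,r)$, $z \in I_j$ and the comparabilities $r_j \sim m(c_j)\sim m(x)\gtrsim r$ one deduces $x \in hI_j$ for a fixed $h>1$ independent of everything, which yields the cutoff $\chi_{hI_j}(x)$.

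Next, for such $x$ (and $\phi$), I want to replace $\gamma$ on the integrand by the constant $\gamma(c_j)$. Write
\[
\int_0^\infty g(y)\eta_j(y)\gamma(y)\phi(y)\,d\mathfrak{m}_\alpha(y)
= \gamma(c_j)\int_0^\infty g(y)\,\widetilde{\phi}(y)\,d\mathfrak{m}_\alpha(y),
\qquad \widetilde{\phi}(y):=\frac{\gamma(y)}{\gamma(c_j)}\,\eta_j(y)\,\phi(y).
\]
The function $\widetilde{\phi}$ is continuously differentiable with $\supp(\widetilde{\phi})\subset \supp(\phi)\cap I_j \subset I(x,r)$, and I claim that $C^{-1}\widetilde{\phi}\in \mathcal{A}_{x,\loc}^\alpha$ for a constant $C>0$ not depending on $x$, $j$, $g$ or $\phi$. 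Indeed, on $I_j$ we have $\gamma(y)/\gamma(c_j)\sim 1$ and $|(\gamma/\gamma(c_j))'(y)| = |2y|\,\gamma(y)/\gamma(c_j)\lesssim c_j \lesssim 1/r_j \lesssim 1/m(c_j)\lesssim 1/r$ (using $r \le m(x)\sim m(c_j)\sim r_j$; the estimate $c_j \lesssim 1/r_j$ is just $r_j = 1/(2c_j)$ for $j\ge 1$, and is immediate for $j=0$). Similarly $\|\eta_j\|_\infty \lesssim 1$ and $\|\eta_j'\|_\infty \lesssim 1/r_j \lesssim 1/r$, since $\{\eta_j\}$ is the partition of unity adapted to the intervals $\mathcal{I}_j$ whose lengths are comparable to $r_j$. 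Combining these via the product rule, $\|\widetilde{\phi}\|_\infty \le C\|\phi\|_\infty$ and $\|\widetilde{\phi}'\|_\infty \le C(\|\phi\|_\infty/r + \|\phi'\|_\infty)$. Since $\phi\in\mathcal{A}_{x,\loc}^\alpha$ gives $\mathfrak{m}_\alpha(I(x,r))\|\phi\|_\infty\le 1$ and $r\mathfrak{m}_\alpha(I(x,r))\|\phi'\|_\infty \le 1$, we get $\mathfrak{m}_\alpha(I(x,r))\|\widetilde\phi\|_\infty \le C$ and $r\,\mathfrak{m}_\alpha(I(x,r))\|\widetilde\phi'\|_\infty \le 2C$, so $\widetilde\phi/(2C)\in \mathcal{A}_{x,\loc}^\alpha$ (with the same interval $I(x,r)$, which still satisfies $r\le \min\{x,m(x)\}$).

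Therefore
\[
\left|\int_0^\infty g(y)\eta_j(y)\gamma(y)\phi(y)\,d\mathfrak{m}_\alpha(y)\right|
= 2C\,\gamma(c_j)\left|\int_0^\infty g(y)\,\frac{\widetilde\phi(y)}{2C}\,d\mathfrak{m}_\alpha(y)\right|
\le 2C\,\gamma(c_j)\,\mathcal{M}_{\alpha,\loc}(g)(x),
\]
and taking the supremum over all admissible $\phi$ (together with the support restriction $x\in hI_j$ established above) gives
\[
\mathcal{M}_{\alpha,\loc}(g\eta_j\gamma)(x)\le 2C\,\gamma(c_j)\,\mathcal{M}_{\alpha,\loc}(g)(x)\,\chi_{hI_j}(x),
\]
as desired; note $h$ and $C$ are independent of $x$, $j$ and $g$.

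I expect the main technical point to be verifying uniformly in $j$ that $\widetilde\phi$ (up to a fixed constant) lies in $\mathcal{A}_{x,\loc}^\alpha$ — specifically, controlling the derivative term $\|\widetilde\phi'\|_\infty$ by $\|\phi\|_\infty/r + \|\phi'\|_\infty$ with a $j$-independent constant. This hinges on the two facts that the derivative of the Gaussian weight normalized by $\gamma(c_j)$ is $O(c_j) = O(1/r_j)$ on $I_j$, and that the partition-of-unity functions $\eta_j$ satisfy $\|\eta_j'\|_\infty \lesssim 1/r_j$; both are standard properties of the scale $\{I_j\}$ but need to be invoked carefully, and the small-$j$ (i.e. $j=0$) case must be checked separately since there $x \sim r_j \sim 1$ rather than $x$ large. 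Everything else is routine bookkeeping with the comparability estimates for $m$ and $\gamma$ on admissible intervals.
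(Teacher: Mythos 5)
Your proof is correct and follows essentially the same route as the paper: you form the normalized test function $\phi\eta_j\gamma/\gamma(c_j)$, verify via the comparability of $m$ and $\gamma$ on admissible intervals (including the bound $r\,y\lesssim 1$, i.e.\ $c_j\lesssim 1/r_j\lesssim 1/r$) that a fixed multiple of it lies in $\mathcal{A}_{x,\loc}^\alpha$, and obtain the cutoff $\chi_{hI_j}$ from the support considerations. The only cosmetic difference is that you re-derive the support restriction inline, whereas the paper invokes its Lemma~\ref{L1}.
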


\begin{proof} Let  $g\in L^1((0,\delta),\gamma_\alpha)$, for each $\delta>0$, $x\in (0,\infty)$ and $j\in \mathbb{N}$. Assume that $\phi\in \mathcal{A}_{x,\loc}^\alpha$ and define $\psi=\phi\eta_j\gamma/\gamma(c_j)$. We have that $\psi\in C^1_c(0,\infty)$ and
\begin{enumerate}
    \item $ \supp(\psi)\subset \supp(\phi)\subset I(x,r)$, for some $0<r\le \min\{x,m(x)\}$;
    \item $\mathfrak{m}_\alpha(I(x,r))\|\psi\|_\infty\le C\mathfrak{m}_\alpha(I(x,r))\|\phi\|_\infty\le C$;
    \item since $|\eta|\le 1$, $r_j|\eta_j'|\le C$, and $0<r\le 1$, we have that
\begin{align*}
r\mathfrak{m}_\alpha(I(x,r))\psi'(y)&\le \frac{r\mathfrak{m}_\alpha(I(x,r))}{\gamma(c_j)}\left(|\phi'(y)|\eta_j(y)|\gamma(y)\right.\\
&\quad \left. +|\phi(y)||\eta_j'(y)|\gamma(y)+2|\phi(y)||\eta_j(y)|y\gamma(y)\right)\\
&\le C,
\end{align*}
for any $y\in (0,\infty)$. Indeed, assume that $I(x,r)\cap I_j\neq\emptyset$. By taking $z\in I(x,r)\cap I_j$ we deduce that  $m(x)\sim m(z)\sim m(c_j)$. Furthermore, if $y\in I_j$, then
\begin{align*}
  ry&=r(y-c_j+c_j)\le r(r_j+c_j)\\
  &\le m(x)(r_j+c_j)\le Cm(c_j)(r_j+c_j)\le C.  
\end{align*}
\end{enumerate}
Also, according to Lemma \ref{L1}, $\supp(\mathcal{M}_{\alpha,\loc}(g\eta_j\gamma))\subset hI_j$ for some $h\geq 1$. With this, we obtain the desired result.
\end{proof}

\begin{lem}\label{L3} Let $f\in L^1((0,\infty),\gamma_\alpha)$ and $j\in \mathbb{N}$. We define
\[
b_j=\frac{\int f\eta_jd\gamma_\alpha}{\int\eta_jd\gamma_\alpha}.
\]
Suppose that $\mathcal{M}_{\alpha,\loc}(f)\in L^1((0,\infty),\gamma_\alpha)$. Then, $\mathcal{M}_{\alpha}((f-b_j)\eta_j\gamma)\in L^1((0,\infty),\mathfrak{m}_\alpha)$ with
\[
\|\mathcal{M}_{\alpha}((f-b_j)\eta_j\gamma)\|_{L^1((0,\infty),\mathfrak{m}_\alpha)}\le C\int_{hI_j}\mathcal{M}_{\alpha,\loc}(f)d\gamma_\alpha,
\]
where $C,h>0$ are are independent of $f$ and $j$.
\end{lem}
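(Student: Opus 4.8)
My plan is to reduce the global maximal function $\mathcal{M}_\alpha$ applied to the compactly supported, mean-zero-against-$\mathfrak{m}_\alpha$ function $(f-b_j)\eta_j\gamma$ to the local maximal function $\mathcal{M}_{\alpha,\loc}(f)$, using that $\eta_j$ is supported in the admissible interval $I_j\in\mathcal{B}$. The key observation is that $g_j:=(f-b_j)\eta_j\gamma$ has mean zero with respect to $\mathfrak{m}_\alpha$: indeed $\int_0^\infty g_j\, d\mathfrak{m}_\alpha = \int (f-b_j)\eta_j e^{-y^2}y^{2\alpha+1}\,dy = \frac{\Gamma(\alpha+1)}{2}\int(f-b_j)\eta_j\, d\gamma_\alpha = 0$ by the very definition of $b_j$. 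Thus for any test function $\phi\in\mathcal{A}_x^\alpha$ (not necessarily local) we may replace $\phi$ by $\phi - c$ for any constant $c$ when integrating against $g_j$, which is the standard device for converting a non-local bump into a controlled local one once the bump's support meets $\supp(g_j)\subset I_j$.

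First I would fix $x>0$ and $\phi\in\mathcal{A}_x^\alpha$ with $\supp(\phi)\subset I(x,r)$, $0<r\le x$, and $\mathfrak{m}_\alpha(I(x,r))\|\phi\|_\infty\le 1$, $r\mathfrak{m}_\alpha(I(x,r))\|\phi'\|_\infty\le 1$. If $I(x,r)\cap I_j=\emptyset$ then $\int g_j\phi\,d\mathfrak{m}_\alpha=0$, so assume they meet; picking $z\in I(x,r)\cap I_j$ gives $m(x)\sim m(z)\sim m(c_j)$, hence $x\in hI_j$ for a suitable $h\ge 1$, and also $r\le x\lesssim$ comparable quantities letting me compare $\mathfrak{m}_\alpha(I(x,r))$ with the relevant pieces. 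Next, using $\int g_j\,d\mathfrak{m}_\alpha=0$, write $\int g_j\phi\,d\mathfrak{m}_\alpha = \int g_j(\phi-c_\phi)\,d\mathfrak{m}_\alpha$ with $c_\phi$ a judiciously chosen constant (e.g. the value of $\phi$ at an endpoint of $I(x,r)$ or the mean of $\phi$ over $I_j$), and expand $g_j = f\eta_j\gamma - b_j\eta_j\gamma$. The first term is handled by recognizing $(\phi-c_\phi)\eta_j\gamma/(\text{normalizing constant})$ as (up to a bounded factor depending on $\gamma(c_j)$, which is itself controlled on $hI_j$) an admissible \emph{local} test function at the relevant points — this is exactly the computation already carried out in Lemma \ref{L2}, so I can cite it: $\mathcal{M}_{\alpha,\loc}(f\eta_j\gamma)(x)\le C\gamma(c_j)\mathcal{M}_{\alpha,\loc}(f)(x)\chi_{hI_j}(x)$, and crucially $\mathcal{M}_\alpha$ acting on something supported in $I_j\in\mathcal B$ with suitably small radius reduces to $\mathcal{M}_{\alpha,\loc}$ because any admissible $\phi$ meeting $I_j$ must itself live at scale $\lesssim m(c_j)$. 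The term with $b_j$ is an $L^1$ bound on a constant times $\eta_j\gamma$ against a bump, estimated directly by $|b_j|\,\mathfrak{m}_\alpha(I_j)\lesssim \frac{1}{\gamma_\alpha(I_j)}\int_{I_j}|f|\,d\gamma_\alpha \lesssim \int_{hI_j}\mathcal{M}_{\alpha,\loc}(f)\,d\gamma_\alpha$, since the average of $|f|$ over $I_j$ is pointwise dominated by $\mathcal{M}_{\alpha,\loc}(f)$ (testing against a normalized indicator-type bump on $I_j$).

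Finally I would take the supremum over $\phi\in\mathcal{A}_x^\alpha$ to get $\mathcal{M}_\alpha(g_j)(x)\le C\gamma(c_j)\mathcal{M}_{\alpha,\loc}(f)(x)\chi_{hI_j}(x) + C\left(\frac{1}{\gamma_\alpha(I_j)}\int_{I_j}|f|\,d\gamma_\alpha\right)\chi_{hI_j}(x)\gamma(c_j)$, integrate $d\mathfrak{m}_\alpha$ over $hI_j$, and use $\gamma(c_j)\,\mathfrak{m}_\alpha(hI_j)\sim \gamma_\alpha(hI_j)\sim\gamma_\alpha(I_j)$ (valid because $I_j\in\mathcal B$, so $\gamma_\alpha$ is doubling on it and $\gamma$ is essentially constant on $hI_j$) together with the doubling of $\gamma_\alpha$ on $\mathcal B$ to absorb the averaging factor; this yields $\|\mathcal{M}_\alpha(g_j)\|_{L^1(\mathfrak{m}_\alpha)}\le C\int_{hI_j}\mathcal{M}_{\alpha,\loc}(f)\,d\gamma_\alpha$ with $C,h$ independent of $f$ and $j$. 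The main obstacle I anticipate is the first step: carefully justifying that $\mathcal{M}_\alpha$ (the \emph{non-local} maximal function) applied to a function supported on the admissible interval $I_j$ is controlled by $\mathcal{M}_{\alpha,\loc}$ of the local pieces — i.e. showing that a test function $\phi\in\mathcal{A}_x^\alpha$ whose support meets $I_j$ can, after subtracting the right constant and exploiting the cancellation $\int g_j\,d\mathfrak{m}_\alpha=0$, be decomposed or truncated into pieces each of which is (a bounded multiple of) an element of $\mathcal{A}_{y,\loc}^\alpha$ for appropriate $y$, keeping track of the interplay between the scale $r$ of $\phi$, the scale $r_j$ of $I_j$, and the normalizations $\mathfrak{m}_\alpha(I(x,r))\|\phi\|_\infty\le1$; the constant-subtraction must be chosen so that $\|\phi-c_\phi\|_\infty\lesssim r\|\phi'\|_\infty$ on the overlap region, which is where the derivative normalization of $\mathcal{A}_x^\alpha$ gets used.
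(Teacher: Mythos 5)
Your proposal has a genuine gap: the claim that $\supp(\phi)\cap I_j\neq\emptyset$ forces $m(x)\sim m(c_j)$ and hence $x\in hI_j$ is only valid for \emph{local} test functions $\phi\in\mathcal{A}_{x,\loc}^\alpha$, where the radius satisfies $r\le m(x)$. For a general $\phi\in\mathcal{A}_x^\alpha$ the only restriction is $r\le x$, so when $x$ is large the support $I(x,r)$ can reach all the way down to a fixed interval $I_j$ even though $x$ is very far from $I_j$ on the scale $m(x)$ (take, e.g., $x$ large, $r=x$, $I_j$ near $1$). Consequently $\mathcal{M}_\alpha\bigl((f-b_j)\eta_j\gamma\bigr)$ is \emph{not} supported in $hI_j$ (only $\mathcal{M}_{\alpha,\loc}$ of such a function is, by Lemma~\ref{L1}), and your claimed pointwise bound carrying the factor $\chi_{hI_j}(x)$, as well as the statement that ``$\mathcal{M}_\alpha$ acting on something supported in $I_j$ reduces to $\mathcal{M}_{\alpha,\loc}$ because any admissible $\phi$ meeting $I_j$ must live at scale $\lesssim m(c_j)$,'' are false. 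The paper's proof is forced to split off the non-local test functions and treat two regions: for $x\in hI_j$ it uses the crude bound $\|\phi\|_\infty\le \mathfrak{m}_\alpha(I(x,m(x)))^{-1}$ (valid since $r>m(x)$ there), and for $x\in (hI_j)^c$ it uses the cancellation $\int(f-b_j)\eta_j\,d\gamma_\alpha=0$, subtracting $\phi(c_j)$ to gain the factor $r_j\|\phi'\|_\infty\le r_j/\bigl(r\,\mathfrak{m}_\alpha(I(x,r))\bigr)$ with $r>\dist(x,I_j)$, and then integrates
\[
\int_{(hI_j)^c}\frac{r_j\,d\mathfrak{m}_\alpha(x)}{\dist(x,I_j)\,\mathfrak{m}_\alpha\bigl(I(x,\dist(x,I_j))\bigr)}\le C
\]
using $\mathfrak{m}_\alpha(I(z,r))\gtrsim z^{2\alpha+1}r$. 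This tail estimate is the heart of the lemma and is entirely absent from your plan, because your final integration is restricted to $hI_j$.

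Relatedly, your prescription for the constant subtraction is too weak: you ask only for $\|\phi-c_\phi\|_\infty\lesssim r\|\phi'\|_\infty$ on the overlap, but $r\|\phi'\|_\infty\le \mathfrak{m}_\alpha(I(x,r))^{-1}$ is no better than the sup-norm bound on $\phi$ itself, so it produces no decay in $\dist(x,I_j)$ and the tail integral would diverge. The gain must come from the \emph{small} length $r_j$ of $I_j$ (choose $c_\phi=\phi(c_j)$, so $|\phi(y)-\phi(c_j)|\le r_j\|\phi'\|_\infty$ for $y\in I_j$), combined with $r>\dist(x,I_j)$. The remaining ingredients of your plan (mean-zero of $(f-b_j)\eta_j\gamma$ against $\mathfrak{m}_\alpha$, the use of Lemma~\ref{L2} for the local part, the bound $|b_j|\lesssim \gamma_\alpha(I_j)^{-1}\int_{I_j}|f|\,d\gamma_\alpha$ and $|f|\le\mathcal{M}_{\alpha,\loc}(f)$) do match the paper's argument, but as it stands the proposal does not prove the lemma.
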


\begin{proof} Since we can split
\begin{align*}
    \mathcal{M}_{\alpha}&((f-b_j)\eta_j\gamma)(x)\\
    &=\mathcal{M}_{\alpha,\loc}((f-b_j)\eta_j\gamma)(x)+\sup_{\phi\in \mathcal{A}_x^\alpha \setminus \mathcal{A}_{x,\loc}^\alpha}\left|\int_0^\infty (f-b_j)\eta_j\gamma(y)\phi(y)\ d\mathfrak{m}_\alpha(y)\right|,
\end{align*}
for any $x\in (0,\infty)$, we will estimate their norms in $L^1((0,\infty),\mathfrak{m}_\alpha)$ separately.

For the first term, according to Lemma~\ref{L2}, applied twice, we get
\begin{align*}
\int_0^\infty &\mathcal{M}_{\alpha,\loc}((f-b_j)\eta_j\gamma)(x)d\mathfrak{m}_\alpha(x)\\
&\le \int_0^\infty \mathcal{M}_{\alpha,\loc}(f\eta_j\gamma)(x)d\mathfrak{m}_\alpha(x)+|b_j|\int_0^\infty \mathcal{M}_{\alpha,\loc}(\eta_j\gamma)(x)d\mathfrak{m}_\alpha(x)\\
&\le C\left(\int_{hI_j}\mathcal{M}_{\alpha,\loc}(f)(x)d\mathfrak{m}_\alpha(x)\gamma(c_j)+|b_j|\int_{hI_j}\mathcal{M}_{\alpha,\loc}(1)(x)d\mathfrak{m}_\alpha(x)\gamma(c_j)\right)\\
&\le C\left(\int_{hI_j}\mathcal{M}_{\alpha,\loc}(f)(x)d\gamma_\alpha(x)+|b_j|\gamma_\alpha(hI_j)\right)
\end{align*}
Since $|f|\le \mathcal{M}_{\alpha,\loc}(f)$ and
\[
|b_j|=\left|\frac{\int f\eta_jd\gamma_\alpha}{\int \eta_jd\gamma_\alpha}\right|\le \frac{1}{\gamma_\alpha\left(\frac12 I_j\right)}\int_{I_j}|f|d\gamma_\alpha\le  C\frac{1}{\gamma_\alpha(I_j)}\int_{I_j}|f|d\gamma_\alpha,
\]
we obtain
\begin{align*}
\int_0^\infty &\mathcal{M}_{\alpha,\loc}((f-b_j)\eta_j\gamma)(x)d\mathfrak{m}_\alpha(x)\\
&\le C\left(\int_{hI_j}\mathcal{M}_{\alpha,\loc}(f)(x)d\gamma_\alpha(x)+\frac{\gamma_\alpha(hI_j)}{\gamma_\alpha(I_j)}\int_{I_j}|f|d\gamma_\alpha(x)\right)\\
&\le C\int_{hI_j}\mathcal{M}_{\alpha,\loc}(f)(x)d\gamma_\alpha(x).
\end{align*}
Suppose now that $\phi\in \mathcal{A}_x^\alpha\setminus\mathcal{A}_{x,\loc}^\alpha$, with $x\in hI_j$. By recalling the notation $I(x,m(x)):=(x-m(x),x+m(x))\cap (0,\infty)$, we have that $\supp(\phi)\subset I(x,r)$ for some $0<r\leq x$ with $r>m(x)$, and $\mathfrak{m}_\alpha(I(x,m(x)))\|\phi\|_\infty \leq 1$. Hence,
\begin{align*}
\left|\int_0^\infty (f-b_j)\phi\eta_j\gamma d\mathfrak{m}_\alpha\right|&\le \frac{C}{\mathfrak{m}_\alpha(I(x, m(x)))}\int_{I_j}|f-b_j|d\gamma_\alpha\\
&\le\frac{C}{\mathfrak{m}_\alpha(I(x, m(x)))}\left(\int_{I_j}|f|d\gamma_\alpha+|b_j|\gamma_\alpha(I_j)\right)\\
&\le \frac{C}{\mathfrak{m}_\alpha(I(x, m(x)))}
\int_{I_j}|f|d\gamma_\alpha.
\end{align*}

It follows that
\begin{align*}
\int_{hI_j}\sup_{\phi\in \mathcal{A}_x^\alpha\setminus\mathcal{A}_{x,\loc}^\alpha}& \left|\int_0^\infty (f-b_j)\phi\eta_j\gamma d\mathfrak{m}_\alpha\right|d\mathfrak{m}_\alpha(x)\\
&\le C\int_{I_j}|f|d\gamma_\alpha\int_{hI_j}\frac{d\mathfrak{m}_\alpha(x)}{\mathfrak{m}_\alpha(I(x, m(x)))}.
\end{align*}

Since $\mathfrak{m}_\alpha$ is doubling  and $m(x)\sim m(c_j)$ provided that $x\in hI_j$, where the equivalence constant is not depending on $j$, we obtain
\begin{align*}
\int_{hI_j}\sup_{\phi\in \mathcal{A}_x^\alpha\setminus\mathcal{A}_{x,\loc}^\alpha}& \left|\int_0^\infty (f-b_j)\phi\eta_j\gamma d\mathfrak{m}_\alpha\right|d\mathfrak{m}_\alpha(x)\\
&\le C\int_{I_j}|f|d\gamma_\alpha\int_{hI_j}\frac{d\mathfrak{m}_\alpha(x)}{\mathfrak{m}_\alpha(I(x, Cm(c_j)))}\\
&\le C\int_{I_j}|f|d\gamma_\alpha\int_{hI_j}\frac{d\mathfrak{m}_\alpha(x)}{\mathfrak{m}_\alpha(I_j)}\le C\int_{I_j}|f|d\gamma_\alpha.
\end{align*}

We now study
\[
\int_{(hI_j)^c}\sup_{\phi\in \mathcal{A}_x^\alpha\setminus\mathcal{A}_{x,\loc}^\alpha} \left|\int_0^\infty (f-b_j)\phi\eta_j\gamma d\mathfrak{m}_\alpha\right|d\mathfrak{m}_\alpha(x).
\]
Let $x\in (hI_j)^c$ and $\phi\in \mathcal{A}_x^\alpha\setminus\mathcal{A}_{x,\loc}^\alpha$. Assume that $\int_0^\infty(f-b_j)\phi\eta_j\gamma d\mathfrak{m}_\alpha\neq 0$. Then, $\supp(\phi)\cap I_j\neq \emptyset$ which yields that $\supp(\phi)\subset I(x,r)$, with $r>\dist (x,I_j)$. Since $\int_0^\infty (f-b_j)\eta_jd\gamma_\alpha=0$ we can write
\begin{align*}
&\left|\int_0^\infty(f-b_j)\phi\eta_j\gamma d\mathfrak{m}_\alpha\right|\\
&=C\left|\int_0^\infty(\phi(y)-\phi(c_j))(f(y)-b_j)\eta_j(y)\gamma(y) d\mathfrak{m}_\alpha(y)\right|\\
&\le C\int_{I_j}|\phi(y)-\phi(c_j)||f(y)-b_j||\eta_j(y)|d\gamma_\alpha(y)\\
&\le C\frac{r_j}{r\mathfrak{m}_\alpha(I(x,r))}\int_{I_j}|f(y)-b_j|d\gamma_\alpha(y)\\
&\le C\frac{r_j}{\dist(x,I_j)\mathfrak{m}_\alpha(I(x,\dist(x,I_j)))}\int_{I_j}|f(y)|d\gamma_\alpha(y).
\end{align*}

We choose $k_0\in \mathbb{N}$ such that $2^{k_0}\le h<2^{k_0+1}$. Thus, calling $R_k(c_j,r_j)= I(c_j,2^{k+1}r_j)\setminus I(c_j,2^{k}r_j)$,
\begin{align*}
\int_{(hI_j)^c}&\frac{1}{\dist (x,I_j)\mathfrak{m}_\alpha(I(x,\dist (x,I_j)))}d\mathfrak{m}_\alpha(x)\\
&\leq C\sum_{k=k_0}^\infty \int_{R_k(c_j,r_j)}\frac{x^{2\alpha+1}}{\dist (x,I_j)\mathfrak{m}_\alpha(I(x,\dist (x,I_j)))}dx\\
&\le C\sum_{k=k_0}^\infty \int_{R_k(c_j,r_j)}\frac{1}{\dist (x,I_j)^2}dx\\
&\le C\sum_{k=0}^\infty\int_{2^kr_j+c_j}^{2^{k+1}r_j+c_j}\frac{dx}{(2^kr_j)^2}\\
&\le \frac{C}{r_j}\sum_{k=0}^\infty 2^{-k}=\frac{C}{r_j},
\end{align*}
where, in the first inequality, we have used that $\mathfrak{m}_\alpha(I(z,r))\ge C z^{2\alpha+1}r$, for every $z,r>0$ (see \cite[(1.4)]{YY}).

Therefore
\[
\int_{(hI_j)^c}\sup_{\phi\in \mathcal{A}_x^\alpha\setminus\mathcal{A}_{x,\loc}^\alpha} \left|\int_0^\infty (f-b_j)\phi\eta_j\gamma d\mathfrak{m}_\alpha\right|d\mathfrak{m}_\alpha(x)\le C\int_{I_j}|f|d\gamma_\alpha.
\]
and we can conclude that
\[\int_0^\infty \mathcal{M}_{\alpha}((f-b_j)\eta_j\gamma)d\mathfrak{m}_\alpha\le C\int_{hI_j}\mathcal{M}_{\alpha,\loc}(f)d\gamma_\alpha.\qedhere\]
\end{proof}

Suppose that $f\in L^1((0,\infty),\gamma_\alpha)$ with $\int_0^\infty fd\gamma_\alpha=0$, $\mathcal{M}_{\alpha,\loc}(f)\in L^1((0,\infty),\gamma_\alpha)$ and $\mathcal{E}_\alpha(f)<\infty$. If $\int_0^\infty fd\gamma_\alpha\neq 0$ we apply our result to $f-\int_0^\infty fd\gamma_\alpha$. To prove the ``if'' implication of Theorem~\ref{H1-Malfaloc} we are going to see that, for some $a>0$, $f=\sum_{j=0}^\infty\lambda_jb_j$, where, for every $j\in \mathbb{N}$, $b_j$ is an $(a,\infty,\alpha)$-atom, and $\lambda_j\in \mathbb{C}$ being
\[
\sum_{j=0}^\infty|\lambda_j|\le C\left(\int_0^\infty|\mathcal{M}_{\alpha,\loc}f(y)|d\gamma_\alpha(y)+\mathcal{E}_\alpha(f)\right).
\]

To achieve this goal, we are going to consider the Hardy space $H^1_\alpha(0,\infty)$ associated with the Bessel operator \[S_\alpha=-\frac{d^2}{dx^2}-\frac{2\alpha+1}{x}\frac{d}{dx},\] that is, the Hardy space defined by the heat semigroup generated by $S_\alpha$ (see \cite{BDT} and \cite{YY}). We shall also take into account the following atoms: we say that a measurable function $b$ is a \textbf{$(\mathfrak{m}_\alpha,\infty)$-atom} when there exist $x_0,\,r_0\in (0,\infty)$ such that
\begin{enumerate}[label=(\alph*)]
    \item $\supp(b)\subset I(x_0, r_0)$;
    \item $\|b\|_\infty\le \mathfrak{m}_\alpha(I(x_0, r_0))^{-1}$;
    \item $\int_0^\infty b(y)\ d\mathfrak{m}_\alpha(y)=0$.
\end{enumerate}

As it was proved in \cite{BDT}, the space $H^1_\alpha(0,\infty)$ coincides with the atomic Hardy space $H^1((0,\infty),|\cdot|,\mathfrak{m}_\alpha))$ defined through these $(\mathfrak{m}_\alpha,\infty)$-atoms in $((0,\infty),|\cdot|,\mathfrak{m}_\alpha)$, in the sense of Coifman and Weiss (\cite{CW}). Clearly, the triple $((0,\infty),|\cdot|,\mathfrak{m}_\alpha)$  is a space of homogeneous type. According to \cite[p.~201]{BDT} the space $H^1_\alpha((0,\infty)$ also coincides with the Coifman-Weiss atomic Hardy space $H^1((0,\infty),d_\alpha,\mathfrak{m}_\alpha)$ associated with the normal homogeneous space $((0,\infty),d_\alpha,\mathfrak{m}_\alpha)$. Here $d_\alpha$ represents the metric defined on $(0,\infty)$ by $d_\alpha(x,y)=\frac{1}{2\alpha+2}\left|x^{2\alpha+2}-y^{2\alpha+2}\right|$, $x,y\in (0,\infty)$.

The following lemma gives us a decomposition for any function in  $H^1_\alpha(0,\infty)$ supported on an interval $I$ into $(\mathfrak{m}_\alpha,\infty)$-atoms with controlled support.

\begin{lem} \label{L31} Let $\alpha>-\frac12 $. There exists $C>0$ such that, for every interval $I\subset (0,\infty)$ and $g\in H^1_\alpha(0,\infty)$ such that $\supp(g)\subset I$, we can write $g=\sum_{j=0}^\infty\lambda_jb_j$ in $L^1((0,\infty),\mathfrak{m}_\alpha)$, where, for every $j\in \mathbb{N}$, $b_j$ is a $(\mathfrak{m}_\alpha,\infty)$-atom having its support contained in $2I$, and $\lambda_j\in \mathbb{C}$ being $\sum_{j=0}^\infty|\lambda_j|\le C\|g\|_{H^1_\alpha(0,\infty)}$.
\end{lem}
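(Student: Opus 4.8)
The plan is to exploit the fact, recalled in the paragraph preceding the statement, that $H^1_\alpha(0,\infty)$ coincides with the Coifman--Weiss atomic Hardy space on the space of homogeneous type $((0,\infty),|\cdot|,\mathfrak{m}_\alpha)$. Thus, starting from $g\in H^1_\alpha(0,\infty)$ with $\supp(g)\subset I$, we obtain a decomposition $g=\sum_j \mu_j a_j$ in $L^1((0,\infty),\mathfrak{m}_\alpha)$ into $(\mathfrak{m}_\alpha,\infty)$-atoms $a_j$ (supported on arbitrary intervals, not necessarily inside $2I$) with $\sum_j|\mu_j|\le 2\|g\|_{H^1_\alpha(0,\infty)}$, say. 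The point of the lemma is to upgrade this to a decomposition whose atoms are supported in $2I$, at the cost of a uniform constant. The standard device for this is to split each atom $a_j$ according to whether its supporting interval is small compared to $I$ or not, and to handle the ``bad'' pieces—those reaching outside $2I$—by using the fact that $g$ itself vanishes off $I$.

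More precisely, first I would reduce to the case $\|g\|_{H^1_\alpha(0,\infty)}=1$. Write $g = g\chi_I$ and consider the partial sums $S_N=\sum_{j\le N}\mu_j a_j$; since $S_N\to g$ in $L^1((0,\infty),\mathfrak{m}_\alpha)$ and $g$ is supported in $I$, the truncations $S_N\chi_{2I}$ also converge to $g$ in $L^1((0,\infty),\mathfrak{m}_\alpha)$. So it suffices to rewrite $g\chi_{2I}=\bigl(\sum_j\mu_j a_j\bigr)\chi_{2I}$, i.e.\ to control $\sum_j\mu_j (a_j\chi_{2I})$. For each $j$, let $I_j=I(x_j,r_j)$ be the interval supporting $a_j$. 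If $I_j\subset 2I$ there is nothing to do: $a_j$ is already an atom supported in $2I$. If $I_j\not\subset 2I$, then $r_j\gtrsim \operatorname{diam}(I)$ (more precisely, $I_j$ meets $I$—otherwise $a_j\chi_{2I}=a_j\chi_I=0$ off $I$... one must be slightly careful, but since ultimately only $a_j\chi_{2I}$ enters and $g$ is supported in $I$, one may as well intersect with $I$: the relevant piece is $a_j\chi_I$, and if $I_j$ meets $I$ with $I_j\not\subset 2I$ then $r_j\gtrsim \operatorname{diam}(I)$, so $\mathfrak{m}_\alpha(I_j)\gtrsim\mathfrak{m}_\alpha(I)$ by the lower bound $\mathfrak{m}_\alpha(I(z,r))\ge C z^{2\alpha+1}r$ together with the doubling of $\mathfrak{m}_\alpha$). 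For such $j$, the truncated piece $\mu_j a_j\chi_{I}$ satisfies $\|\mu_j a_j\chi_I\|_\infty\le |\mu_j|\,\mathfrak{m}_\alpha(I_j)^{-1}\le C|\mu_j|\,\mathfrak{m}_\alpha(2I)^{-1}$; it is supported in $I\subset 2I$ but need not have vanishing integral. Collect all the ``bad'' truncations into $G:=\sum_{j:\,I_j\not\subset 2I}\mu_j a_j\chi_I$, a function supported in $I$ with $\|G\|_{L^\infty}\le C\mathfrak{m}_\alpha(2I)^{-1}\sum_j|\mu_j|\le C\mathfrak{m}_\alpha(2I)^{-1}$, hence $\|G\|_{L^1((0,\infty),\mathfrak{m}_\alpha)}\le C$.

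Now observe that $g - G = \sum_{j:\,I_j\subset 2I}\mu_j a_j$ in $L^1((0,\infty),\mathfrak{m}_\alpha)$, a sum of multiples of $(\mathfrak{m}_\alpha,\infty)$-atoms supported in $2I$ with $\ell^1$-sum of coefficients $\le C$. So it remains only to decompose $G$ itself into $(\mathfrak{m}_\alpha,\infty)$-atoms supported in $2I$. But $G$ is a bounded function supported in $I$, and it has the good cancellation coming from $\int_0^\infty G\,d\mathfrak{m}_\alpha = \int_0^\infty (g-\sum_{j:\,I_j\subset 2I}\mu_j a_j)\,d\mathfrak{m}_\alpha$; since $g\in H^1_\alpha(0,\infty)$ forces $\int_0^\infty g\,d\mathfrak{m}_\alpha=0$ and each atom $a_j$ has $\int a_j\,d\mathfrak{m}_\alpha=0$, we get $\int_0^\infty G\,d\mathfrak{m}_\alpha=0$. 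Therefore $\tfrac{1}{\lambda}G$ with $\lambda := C\,\mathfrak{m}_\alpha(2I)\,\|G\|_{L^\infty}\le C$ (and $\lambda\le C$) is—after adjusting the constant—itself a single $(\mathfrak{m}_\alpha,\infty)$-atom supported in $I\subset 2I$; so $G=\lambda\cdot(\lambda^{-1}G)$ is a legitimate one-atom piece with $|\lambda|\le C$. Assembling, $g = (g-G)+G = \sum_{j:\,I_j\subset 2I}\mu_j a_j + \lambda(\lambda^{-1}G)$ exhibits the required decomposition with $\sum|\lambda_j|\le C = C\|g\|_{H^1_\alpha(0,\infty)}$ after undoing the normalization.

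The only genuinely delicate point—the ``main obstacle''—is the geometric bookkeeping in the bad case: one must verify that whenever an atom's supporting interval $I_j$ both meets $I$ and is not contained in $2I$, its $\mathfrak{m}_\alpha$-measure is comparable to or larger than $\mathfrak{m}_\alpha(2I)$, uniformly in $j$ and in the position of $I$. This is where the non-translation-invariance of $\mathfrak{m}_\alpha$ could in principle cause trouble, but it is handled cleanly by the two structural facts already available in the paper: the lower bound $\mathfrak{m}_\alpha(I(z,r))\ge C z^{2\alpha+1} r$ from \cite[(1.4)]{YY} and the (global) doubling of $\mathfrak{m}_\alpha$ on all of $(0,\infty)$—indeed, $\mathfrak{m}_\alpha$ is a doubling measure in the ordinary sense, so $\mathfrak{m}_\alpha(I_j)\ge c\,\mathfrak{m}_\alpha(2I)$ whenever $I_j\supset$ a fixed dilate of $I$, which holds once $r_j\gtrsim\operatorname{diam}(I)$ and $I_j\cap I\neq\emptyset$. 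Everything else is the routine Coifman--Weiss truncation argument.
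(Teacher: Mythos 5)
The decisive step of your argument fails. From $g=\sum_j\mu_j a_j$ in $L^1((0,\infty),\mathfrak{m}_\alpha)$ you keep the ``good'' atoms (those with $I_j\subset 2I$) whole, replace each ``bad'' atom by its truncation $a_j\chi_I$, set $G=\sum_{j:\,I_j\not\subset 2I}\mu_j a_j\chi_I$, and then assert $g-G=\sum_{j:\,I_j\subset 2I}\mu_j a_j$. Comparing this with the original decomposition, the assertion is equivalent to $\sum_{j:\,I_j\not\subset 2I}\mu_j a_j\chi_{(0,\infty)\setminus I}=0$ a.e., and there is no reason for this sub-sum to vanish off $I$. The support hypothesis only tells you that the \emph{total} sum vanishes off $I$: on $(0,\infty)\setminus I$ one has $\sum_{\mathrm{bad}}\mu_j a_j=-\sum_{\mathrm{good}}\mu_j a_j$, and the good atoms generally carry nonzero values on $2I\setminus I$, while bad atoms need not even meet $I$ (yet still contribute off $I$). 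Consequently $G$ is not $g$ minus the good part, and in particular the cancellation $\int_0^\infty G\,d\mathfrak{m}_\alpha=0$, which you deduce precisely from that false identity, is unjustified; without it $G$ is not a multiple of an $(\mathfrak{m}_\alpha,\infty)$-atom and the decomposition does not close.

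This is not a bookkeeping issue that a small patch fixes. If you truncate every atom by $\chi_{2I}$ (so that the identity does hold), then small bad atoms straddling an endpoint of $2I$ have $\|a_j\chi_{2I}\|_\infty=\mathfrak{m}_\alpha(I_j)^{-1}$ with $\mathfrak{m}_\alpha(I_j)$ arbitrarily small relative to $\mathfrak{m}_\alpha(2I)$, so the uniform $L^\infty$ bound on the collected remainder is lost; if instead you define the remainder correctly as $G:=g-\sum_{j:\,I_j\subset 2I}\mu_j a_j=\sum_{j:\,I_j\not\subset 2I}\mu_j a_j$, then $G$ is supported in $2I$, has mean zero and controlled $H^1_\alpha$ norm, but is in general unbounded, and you are facing the original problem again (decompose an $H^1$ function supported in a fixed interval into atoms supported in its double), i.e.\ the argument is circular. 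Your geometric observation (if $I_j$ meets $I$ and $I_j\not\subset 2I$ then $r_j\gtrsim r_I$ and $\mathfrak{m}_\alpha(I_j)\gtrsim\mathfrak{m}_\alpha(2I)$) is correct, but it does not repair the identity. The paper avoids this obstruction by a different route: using the Mac\'{\i}as--Segovia maximal function characterization with a cut-off adapted to $I\subset 2I$ (\cite[Theorem~4.13]{MS}), it shows that $g$ belongs to the Coifman--Weiss space $H^1(2I,d_\alpha,\mathfrak{m}_\alpha)$ of the bounded homogeneous space $2I$ with $\|g\|_{H^1(2I,d_\alpha,\mathfrak{m}_\alpha)}\le C\|g\|_{H^1_\alpha(0,\infty)}$, performs the atomic decomposition inside $2I$ (where the constant function is an admissible atom), and then kills the coefficient of the constant atom using $\int_0^\infty g\,d\mathfrak{m}_\alpha=0$. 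Some argument of that type (or a genuinely new idea for handling the boundary atoms) is needed; the naive truncation as written does not prove the lemma.
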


\begin{proof} If $I=(0,\infty)$ the property is clear from \cite[Theorem~1.7]{BDT}. Let $I$ be a bounded interval of $(0,\infty)$ and suppose that $g\in H^1_\alpha(0,\infty)$ with $\supp(g)\subset I$. Then, according to \cite[Theorem~4.13]{MS} by introducing a cut off function with respect to the intervals $I$ and $2I$ in the maximal functions we deduce that $g\in H^1(2I,d_\alpha,\mathfrak{m}_\alpha)$ and
\begin{equation}\label{H10}
\|g\|_{H^1(2I,d_\alpha,\mathfrak{m}_\alpha)}\le C\|g\|_{H^1((0,\infty),d_\alpha,\mathfrak{m}_\alpha)}\le C\|g\|_{H^1_\alpha(0,\infty)}.
\end{equation}
Therefore, we can write $g=\sum_{j=1}^\infty\lambda_jb_j+\lambda_0b_0$ in $L^1((0,\infty),\mathfrak{m}_\alpha)$, where $b_j$ is an~$(\mathfrak{m}_\alpha,\infty)$-atom such that $\supp(b_j)\subset 2I$, for every $j\in \mathbb{N}\setminus\{0\}$, $b_0(x)=1$ for every $x\in 2I$, and $\lambda_j\in \mathbb{C}$, for every $j\in \mathbb{N}$, being \begin{equation}\label{H11}
\sum_{j=0}^\infty|\lambda_j|\le C\|g\|_{H^1_\alpha(0,\infty)}.
 \end{equation}
 Since $\int_0^\infty g(y)d\mathfrak{m}_\alpha(y)=\int_0^\infty b_j(y)d\mathfrak{m}_\alpha(y)=0$, for every $j\in \mathbb{N}\setminus\{0\}$, and the series $\sum_{j=1}^\infty\lambda_jb_j+\lambda_0b_0$ converges in $L^1((0,\infty),\mathfrak{m}_\alpha)$, it follows that $\lambda_0=0$. Note that the constants $C>0$ in \eqref{H10} and \eqref{H11} do not depend on $g$ or $I$.
\end{proof}

The following result will allow us to obtain an atomic decomposition of a function $g$ into $(\mathfrak{m}_\alpha,\infty)$-atoms whenever $\mathcal{M}_\alpha(g)\in L^1((0,\infty),\mathfrak{m}_\alpha)$.

\begin{lem}\label{L32}
Let $\alpha>-\frac12 $. There exists $C>0$ such that if $g\in L^1((0,\infty),\mathfrak{m}_\alpha)$, then $g\in H^1_\alpha(0,\infty)$ provided that $\mathcal{M}_\alpha(g)\in L^1((0,\infty),\mathfrak{m}_\alpha)$ being
\[
\|g\|_{H^1_\alpha(0,\infty)}\le C\|\mathcal{M}_\alpha(g)\|_{L^1((0,\infty),\mathfrak{m}_\alpha)}.
\]
\end{lem}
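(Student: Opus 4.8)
The plan is to compare the maximal function $\mathcal{M}_\alpha$ with the maximal function that defines $H^1_\alpha(0,\infty)$ through the heat semigroup $\{e^{-tS_\alpha}\}_{t>0}$ generated by the Bessel operator $S_\alpha$, and then invoke the known maximal characterization of $H^1_\alpha(0,\infty)$ proved in \cite{BDT} (together with its identification, recalled above, with the Coifman--Weiss atomic Hardy space on the space of homogeneous type $((0,\infty),|\cdot|,\mathfrak{m}_\alpha)$). Concretely, write $P_t^\alpha(x,y)$ for the heat kernel of $S_\alpha$ and $P_*^\alpha g(x)=\sup_{t>0}|e^{-tS_\alpha}g(x)|$ for the associated maximal operator; it suffices to show the pointwise domination $P_*^\alpha g(x)\le C\,\mathcal{M}_\alpha(g)(x)$ for all $x\in(0,\infty)$, since by \cite{BDT} one has $\|g\|_{H^1_\alpha(0,\infty)}\simeq \|P_*^\alpha g\|_{L^1((0,\infty),\mathfrak{m}_\alpha)}$ for $g\in L^1((0,\infty),\mathfrak{m}_\alpha)$ (this is where the hypothesis $g\in L^1((0,\infty),\mathfrak{m}_\alpha)$ is used, to ensure $e^{-tS_\alpha}g$ makes sense and the atomic/maximal theory applies).

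The core of the argument is therefore to realize, for each fixed $x$ and $t$, the function $y\mapsto P_t^\alpha(x,y)$ (or a suitable truncation/rescaling of it) as a superposition $\int_0^\infty \phi^{(s)}\,d\mu(s)$ of functions $\phi^{(s)}\in\mathcal{A}_x^\alpha$ — more precisely, up to a multiplicative constant $C$ depending only on $\alpha$ — so that
\[
|e^{-tS_\alpha}g(x)|=\left|\int_0^\infty g(y)P_t^\alpha(x,y)\,d\mathfrak{m}_\alpha(y)\right|\le C\,\mathcal{M}_\alpha(g)(x).
\]
The standard way to do this: the kernel $P_t^\alpha(x,\cdot)$ is smooth, essentially supported (up to rapidly decaying tails) in $I(x,\sqrt t\,)$ if $\sqrt t\le x$, with size $\lesssim \mathfrak{m}_\alpha(I(x,\sqrt t))^{-1}$ and derivative bound $\lesssim (\sqrt t\,\mathfrak{m}_\alpha(I(x,\sqrt t)))^{-1}$; thus a fixed constant multiple of it, once localized to $I(x,r)$ with $r\sim\sqrt t$, satisfies the defining inequalities of $\mathcal{A}_x^\alpha$. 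When $\sqrt t>x$ one decomposes $(0,\infty)$ (or $(0,2x)$, say, the tail $y>2x$ being controlled by rapid decay) into dyadic pieces $I(x,r)$ with $r\le x$ on which the kernel is again comparable to a fixed admissible bump, summing a geometric series in the usual Whitney-type fashion; the mean-value/cancellation structure is not needed here since $\mathcal{A}_x^\alpha$ imposes no mean-zero condition. One also has to handle $y$ near $0$ separately using $C^1_c(0,\infty)$ approximations of the bumps so that genuine elements of $\mathcal{A}_x^\alpha$ are produced, but this is routine since the kernel and its derivative are integrable against $\mathfrak{m}_\alpha$ near the origin.

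The main obstacle I anticipate is the bookkeeping in the regime $\sqrt t$ comparable to or larger than $x$ (equivalently, $x$ small), where $I(x,\sqrt t\,)$ is not a legitimate interval of the form $I(x,r)$ with $r\le x$: there one must split into annuli, produce for each annulus an admissible test function in $\mathcal{A}_x^\alpha$ from the appropriate restriction of $P_t^\alpha(x,\cdot)$ with correctly sized normalizing constants, and sum the resulting estimates against the geometrically decaying kernel values — all with constants uniform in $x$ and $t$. The requisite Gaussian-type bounds and gradient bounds for $P_t^\alpha$ are available in \cite{BDT} and \cite{YY}, and the doubling of $\mathfrak{m}_\alpha$ makes the geometric summation go through; with those in hand the domination $P_*^\alpha g\le C\mathcal{M}_\alpha(g)$ follows, and the lemma is proved.
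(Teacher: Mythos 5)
Your overall strategy—dominating a known maximal-function characterization of $H^1_\alpha(0,\infty)$ by $\mathcal{M}_\alpha$—is the same as the paper's, but your specific choice of the heat maximal function creates a genuine gap: the claimed pointwise domination $P_*^\alpha g(x)\le C\,\mathcal{M}_\alpha(g)(x)$ is false. Every $\phi\in\mathcal{A}_x^\alpha$ is supported in $I(x,r)$ with $r\le x$, hence in $(0,2x)$, so $\mathcal{M}_\alpha(g)(x)$ carries no information whatsoever about $g$ on $[2x,\infty)$; taking $g\ge 0$ supported in $(10x_0,10x_0+1)$ gives $\mathcal{M}_\alpha(g)(x_0)=0$ while $e^{-tS_\alpha}g(x_0)>0$ for every $t$. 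The tail $y>2x$ therefore cannot be ``controlled by rapid decay'' of the kernel, because the size of $g$ there is unconstrained by $\mathcal{M}_\alpha(g)(x)$. The left far field fails as well, by a power of $x$: since $\phi\in C^1_c(0,\infty)$ vanishes near the origin and $\|\phi'\|_\infty\le (r\,\mathfrak{m}_\alpha(I(x,r)))^{-1}$, any admissible $\phi$ reaching a fixed interval, say $(1,2)$, with $x$ large satisfies $|\phi(y)|\lesssim x^{-2\alpha-3}$ there, so a nonnegative bump $g$ on $(1,2)$ has $\mathcal{M}_\alpha(g)(x)\lesssim x^{-2\alpha-3}$, whereas $\sup_{t>0}P_t^\alpha(x,y)\sim x^{-2\alpha-2}$ for $y\in(1,2)$ (maximized at $t\sim x^2$), so $P_*^\alpha g(x)\gtrsim x^{-2\alpha-2}$. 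Thus no superposition of $\mathcal{A}_x^\alpha$ bumps can reproduce the heat kernel at scales $\sqrt t\gtrsim x$ or off $(0,2x)$, and the ``Whitney-type'' summation you sketch cannot close. Any repair of the heat-kernel route must handle these regimes by integrated $L^1(\mathfrak{m}_\alpha)$ estimates exploiting the cancellation of $g$ (note $\int_0^\infty g\,d\mathfrak{m}_\alpha=0$ is forced for membership in $H^1_\alpha$) — precisely the ingredient you assert is not needed.

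By contrast, the paper avoids tails altogether: it invokes the radial maximal characterization of $H^1_\alpha(0,\infty)$ from \cite{BDT} and \cite{YY} built from Hankel translates $\Phi_{x,t}={}_{\alpha}\tau_x(\phi_t)$ of a single bump $\phi\in C_c^\infty(0,1)$, which are supported exactly in $I(x,t)$ and verified to satisfy $\|\Phi_{x,t}\|_\infty\lesssim \mathfrak{m}_\alpha(I(x,t))^{-1}$ and $\|\partial_y\Phi_{x,t}\|_\infty\lesssim (t\,\mathfrak{m}_\alpha(I(x,t)))^{-1}$, so that (a constant times) $\Phi_{x,t}$ is an admissible test function and the pointwise bound by $\mathcal{M}_\alpha(g)$ is immediate, with no annular decomposition. (Even that argument is delicate only at the scales $t>x$, where the support condition $r\le x$ of $\mathcal{A}_x^\alpha$ is strained — which confirms that the large-scale regime is exactly where the difficulty lives; your choice of the globally supported heat kernel makes this difficulty unavoidable and, as stated, fatal to the pointwise claim.)
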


\begin{proof} Assume that $0\le\phi\in C_c^\infty(0,\infty)$ and $\supp(\phi)\subset (0,1)$. We define, for every $t>0$, $\phi_t(x)=t^{-2\alpha-2}\phi(x/t)$, $x\in (0,\infty)$, and, for every $t,x\in (0,\infty)$, $\Phi_{x,t}={}_{\alpha}\tau_x(\phi_t)$. Here ${}_{\alpha}\tau_x$ denotes the $x$-translation associated with the operator $S_\alpha$, also called Hankel $x$-translation (see \cite{GS}, \cite{Ha} and \cite{Hi}). We can write
\[
\Phi_{x,t}(y)=\int_{|x-y|}^{x+y}\phi_t(z)D_\alpha(x,y,z)z^{2\alpha+1}dz,\quad \,x,y,t\in (0,\infty),
\]
where, for every $x,y,z\in (0,\infty)$,
\[
D_\alpha(x,y,z)=\frac{2^{2\alpha-1}\Gamma(\alpha+1)}{\Gamma\left(\alpha+\frac12\right)\sqrt{\pi}}(xyz)^{-2\alpha}(\Upsilon(x,y,z))^{2\alpha-1},\quad x,y,z\in (0,\infty),
\]
being $\Upsilon(x,y,z)$ the area of a triangle with side lengths $x,\,y,\,z$ when this triangle exists and $\Upsilon(x,y,z)=0$ in other cases. It follows that $\Phi_{x,t}(y)=0$ for every $|x-y|>t$ and $t>0$.

We also have that, for any $x,y,t\in (0,\infty)$,
\begin{equation}\label{Hankel-integral}
\Phi_{x,t}(y)=\frac{\Gamma(\alpha+1)}{\Gamma\left(\alpha+\frac12\right)\sqrt{\pi}}\int_0^\pi\phi_t\left(\sqrt{x^2+y^2-2xy\cos\theta}\right)(\sin\theta)^{2\alpha}d\theta.    
\end{equation}

Notice that, since $\phi\in C_c^\infty(0,\infty)$, for every $x\in (0,\infty)$ it satisfies the following properties
\begin{enumerate}[label=(\alph*)]
    \item $0\le \phi(x)\le C(1+x^2)^{-\alpha-3/2}$;
    \item $|\phi'(x)|\le Cx(1+x^2)^{-\alpha-5/2}$;
    \item $|\phi''(x)|\le C(1+x^2)^{-\alpha-5/2}$.
\end{enumerate}
 Then, according to \cite[Theorem~2.7]{BDT} and \cite[Theorem~1.1]{YY}, $g$ is in $H^1_\alpha(0,\infty)$ provided that $g\in L^1((0,\infty),\mathfrak{m}_\alpha)$
  and 
  \[
\sup_{t>0}\left|\int_0^\infty\Phi_{x,t}(y)g(y)d\mathfrak{m}_\alpha(y)\right|\in L^1((0,\infty),\mathfrak{m}_\alpha).
\]
Since $\mathcal{M}_\alpha(g)\in L^1((0,\infty),\mathfrak{m}_\alpha)$,
  it is enough to show that for every $g\in L^1_{\loc}(0,\infty)$,
\begin{equation}\label{eq: maximales}
\sup_{t>0}\left|\int_0^\infty\Phi_{x,t}(y)g(y)d\mathfrak{m}_\alpha(y)\right|\le \mathcal{M}_\alpha(g)(x),\quad x\in (0,\infty).
\end{equation}
We will do so by showing that $\Phi_{x,t}\in \mathcal{A}_{x}^\alpha$ for every $x,t\in (0,\infty)$.

First, we are going to see that
\begin{equation}\label{Z1}
\left|\Phi_{x,t}(y)\right|\le \frac{C}{\mathfrak{m}_\alpha(I(x,t))},\quad x,y,t\in (0,\infty).
\end{equation}
By taking $0<\epsilon<2\alpha+1$, and using that $2(1-\cos\theta)\sim \theta^2$ for every $\theta\in [0,\pi]$, we have that
\begin{align*}
|\Phi_{x,t}(y)|&\le \frac{C}{t^{2\alpha+2}}\int_0^\pi \left|\phi\left(\frac{\sqrt{x^2+y^2-2xy\cos\theta}}{t}\right)\right|(\sin\theta)^{2\alpha}d\theta\\
&\le \frac{C}{t^{2\alpha+2}}\left(\int_0^{t/\sqrt{xy}}\theta^{2\alpha}\left(\frac{t}{\sqrt{(x-y)^2+xy\theta^2}}\right)^{2\alpha+1-\epsilon}d\theta \right.\\
&\left.+\int_{t/\sqrt{xy}}^\pi\theta^{2\alpha}\left(\frac{t}{\sqrt{(x-y)^2+xy\theta^2}}\right)^{2\alpha+1+\epsilon}d\theta\right)\\
&\le C\left(\frac{1}{t^{1+\varepsilon}}\int_0^{t/\sqrt{xy}}\theta^{\epsilon-1}d\theta (xy)^{-\alpha-(1-\epsilon)/2}\right.\\
&\left.+\frac{1}{t^{1-\varepsilon}}\int_{t/\sqrt{xy}}^\infty\theta^{-\epsilon-1}d\theta (xy)^{-\alpha-(1+\epsilon)/2}\right)\\
&\le C\frac{1}{t(xy)^{\alpha+1/2}},\quad x,y,t\in (0,\infty).
\end{align*}
Then, whenever $0<\frac x2<y<2x<\infty$
\[
\left|\Phi_{x,t}(y)\right|\le\frac{C}{tx^{2\alpha+1}},\quad t\in (0,\infty).
\]
On the other hand, if $0<y<\frac x2<y$ or $2x<y<\infty$, we get
\begin{align*}
\left|\Phi_{x,t}(y)\right|&\le\frac{C}{t^{2\alpha+2}}\int_0^\pi\left(\frac{t}{\sqrt{x^2+y^2-2xy\cos\theta}}\right)^{2\alpha+1}\theta^{2\alpha}d\theta\\
&\le \frac{C}{t|x-y|^{2\alpha+1}}\\
&\le \frac{C}{tx^{2\alpha+1}},\quad t\in (0,\infty).
\end{align*}
Since $\mathfrak{m}_\alpha(I(x,t))\sim t(x+t)^{2\alpha+1}$, for any $x,t\in (0,\infty)$ and, as it is clear from~\eqref{Hankel-integral},
\[
\left|\Phi_{x,t}(y)\right|\le \frac{C}{t^{2\alpha+2}},\quad x,y,t\in (0,\infty),
\]
we conclude that \eqref{Z1} holds.

Now, for every $x,y,t\in (0,\infty)$,
\begin{align*}
\partial_y\Phi_{x,t}(y)=\frac{\Gamma(\alpha+1)}{\Gamma\left(\alpha+\frac12\right)\sqrt{\pi}t^{2\alpha+3}}\int_0^\pi&\phi'\left(\frac{\sqrt{x^2+y^2-2xy\cos\theta}}{t}\right)\\
& \times \frac{y-x\cos\theta}{\sqrt{x^2+y^2-2xy\cos\theta}}(\sin\theta)^{2\alpha}d\theta.
\end{align*}
Since, for every $x,y\in (0,\infty)$ and $\theta\in (0,\pi)$,
\[
(y-x\cos\theta)^2=y^2+x^2(\cos\theta)^2-2xy\cos\theta=x^2+y^2-2xy\cos\theta-(1-\cos^2\theta)x^2,
\]
we get
\begin{align*}
|\partial_y\Phi_{x,t}(y)|&\le \frac{\Gamma(\alpha+1)}{\Gamma\left(\alpha+\frac12\right)\sqrt{\pi}t^{2\alpha+3}}\int_0^\pi\phi'\left(\frac{\sqrt{x^2+y^2-2xy\cos\theta}}{t}\right)(\sin\theta)^{2\alpha}d\theta\\
&=\frac{1}{t}\, {}_{\alpha}\tau_x(|\phi'|)(y)\\
&\le \frac{1}{t\mathfrak{m}_\alpha(I(x,t))},\quad \,x,y,t\in (0,\infty).
\end{align*}
In order to see the last inequality we can proceed as above {by considering $|\phi'|$ instead of $\phi$}. Thus, we have proved~\eqref{eq: maximales} and this finishes the proof.
\end{proof}

Hence, according to Lemmas \ref{L3}, \ref{L31}, and \ref{L32}, for every $j\in \mathbb{N}$ we obtain that~${(f-b_j)\eta_j\gamma\in H^1_\alpha(0,\infty)}$, and there exist a sequence $\{\lambda_{i,j}\}_{i\in \mathbb{N}}\subset \mathbb{C}$ and a sequence $\{a_{i,j}\}_{i\in \mathbb{N}}$ of $(\mathfrak{m}_\alpha,\infty)$-atoms such that $\supp(a_{i,j})\subset 2I_j$, for every $i\in \mathbb{N}$, and
\[
(f-b_j)\eta_j\gamma=\sum_{i=0}^\infty\lambda_{i,j}a_{i,j},
\]
being $\sum_{i=0}^\infty|\lambda_{i,j}|\le C\int_{hI_j}\mathcal{M}_{\alpha,\loc}(f)d\gamma_\alpha$.

Moreover, for each $j\in \mathbb{N}$,
\begin{align*}
\sum_{i=0}^\infty|\lambda_{i,j}|\|a_{i,j}/\gamma\|_{L^1((0,\infty),\gamma_\alpha)}&=\sum_{j=0}^\infty|\lambda_{i,j}|\|a_{i,j}\|_{L^1((0,\infty),\mathfrak{m}_\alpha)}\\
&\le C\sum_{i=0}^\infty|\lambda_{i,j}|\\
&\le C\int_{hI_j}\mathcal{M}_{\alpha,\loc}(f)d\gamma_\alpha.
\end{align*}
Then,
\begin{align*}
\sum_{j=0}^\infty\sum_{i=0}^\infty|\lambda_{i,j}|\|a_{i,j}/\gamma\|_{L^1((0,\infty),\gamma_\alpha)}&\le C\sum_{j=0}^\infty\int_{hI_j}\mathcal{M}_{\alpha,\loc}(f)d\gamma_\alpha\\
&\le C\|\mathcal{M}_{\alpha,\loc}(f)\|_{L^1(0,\infty),\gamma_\alpha)},
\end{align*}
so the double series $\sum_{(i,j)\in \mathbb{N}\times\mathbb{N}}|\lambda_{i,j}||a_{i,j}/\gamma|$ converges in $L^1((0,\infty),\gamma_\alpha)$. Then, since the last series consists of positive functions, the series $\sum_{(i,j)\in \mathbb{N}\times\mathbb{N}}|\lambda_{i,j}||a_{i,j}/\gamma|$ converges almost everywhere in $(0,\infty)$ and we can write
\[
\sum_{(i,j)\in \mathbb{N}\times\mathbb{N}}\lambda_{i,j}a_{i,j}/\gamma=\sum_{j=0}^\infty\sum_{i=0}^\infty\lambda_{i,j}a_{i,j}/\gamma
\]
in $L^1((0,\infty),\gamma_\alpha)$ and almost everywhere in $(0,\infty)$.

Furthermore, for every $i,j\in \mathbb{N}$, $a_{i,j}/\gamma$ is a $(1,\infty,\alpha)$-atom as ${\supp(a_{i,j})\subset 2I_j}$. We have then just proved that
\[
\sum_{j=0}^\infty (f-b_j)\eta_j=\sum_{j=0}^\infty\sum_{i=0}^\infty\lambda_{i,j}a_{i,j}/\gamma.
\]

Our next objective is to study $\sum_{j=0}^\infty b_j\eta_j$. Since
\[
|b_j|\le \frac{C}{\gamma_\alpha(I_j)}\int_{I_j}|f|d\gamma_\alpha,\quad j\in \mathbb{N},
\]
we get
\[
\left\|\sum_{j=0}^\infty |b_j||\eta_j|\right\|_{L^1((0,\infty),\gamma_\alpha)}\le C\sum_{j=0}^\infty \int_{I_j}|f|d\gamma_\alpha\le C\|f\|_{L^1((0,\infty),\gamma_\alpha)}.
\]
Then, there exists an increasing function $\psi:\mathbb{N}\rightarrow \mathbb{N}$ such that
\[
\lim_{k\to\infty}\sum_{j=0}^{\psi(k)}|b_j(x)||\eta_j(x)|
\]
exists for almost every $x\in (0,\infty)$. We conclude that the series $\sum_{j=0}^\infty |b_j||\eta_j|$ converges almost everywhere in $(0,\infty)$.

For $j\in \mathbb{N}$, we define $\widetilde {\eta}_j=\eta_j/\int_0^\infty\eta_jd\gamma_\alpha$, so we can write $b_j\eta_j=\widetilde {\eta}_j\int_0^\infty f\eta_jd\gamma_\alpha$. We also consider $\mu_k=\sum_{j=k}^\infty\eta_j$, for $k\in \mathbb{N}$. Since
\[
\int_0^\infty |f(y)|\sum_{j=k}^\infty\eta_j(y)d\gamma_\alpha(y)\le C\int_0^\infty |f(y)|d\gamma_\alpha(y), \quad k\in \mathbb{N},
\]
it follows that
\[
\sum_{j=k}^\infty \int_0^\infty f(y)\eta_j(y)d\gamma_\alpha=\int_0^\infty f(y)\mu_k(y)d\gamma_\alpha(y),\quad k\in \mathbb{N}.
\]
We get
\[
\sum_{j=0
^\infty} b_j\eta_j=\left|\int_0^\infty f(y)\mu_k(y)d\gamma_\alpha(y)\right|\le \sum_{j=k}^\infty\int_{I_j}|f(y)|d\gamma_\alpha(y)\le C\int_{\sqrt{k-1}}^\infty|f(y)|d\gamma_\alpha(y)
\]
where the last integral tends to zero as $k\to \infty$. By summation by parts, since $\lim_{k\to\infty}\eta_k(x)=0$, $x\in (0,\infty)$, we deduce
\[\sum_{j=0}^\infty b_j\eta_j=
\sum_{j=0}^\infty\int_0^\infty f(y)\eta_j(y)d\gamma_\alpha(y)\widetilde {\eta}_j=\sum_{k=0}^\infty \int_0^\infty f(y)\mu_{k+1}(y)d\gamma_\alpha(y)(\widetilde {\eta}_{k+1}-\widetilde {\eta}_{k}).
\]
We claim there exist $C>0$ such that $C(\widetilde {\eta}_{k+1}-\widetilde {\eta}_{k})$  is a $(2,\infty,\alpha)$-atom, for every $k\in \mathbb{N}$. Indeed
\begin{enumerate}
    \item $\int_0^\infty (\widetilde {\eta}_{k+1}-\widetilde {\eta}_{k})d\gamma_\alpha=\int_0^\infty \widetilde {\eta}_{k+1}d\gamma_\alpha-\int_0^\infty \widetilde {\eta}_{k}d\gamma_\alpha=0$ for each $k\in \mathbb{N}$;
    \item $\supp\left(\widetilde {\eta}_{k+1}-\widetilde {\eta}_{k}\right)\subset \left[\sqrt{k-1},\sqrt{k+2}\right]$, for $k\in \mathbb{N}\setminus\{0\}$, while $\widetilde {\eta}_1-\widetilde {\eta}_{0}$ is supported on $\left[0,\sqrt{2}\right]$;
    \item When $k\in\mathbb{N}\setminus\{0\}$, $\left[\sqrt{k-1},\sqrt{k+2}\right]\subset D_k$, where
\[
D_k:= \left[\tfrac{\sqrt{k-1}+\sqrt{k+2}}{2}-\tfrac{2}{\sqrt{k-1}+\sqrt{k+2}},\tfrac{\sqrt{k-1}+\sqrt{k+2}}{2}+\tfrac{2}{\sqrt{k-1}+\sqrt{k+2}}\right],
\]
and
\[\|\widetilde {\eta}_{k+1}-\widetilde {\eta}_{k}\|_\infty\le \frac{1}{\gamma_\alpha(\frac{1}{2}I_k)}+\frac{1}{\gamma_\alpha(\frac{1}{2}I_{k+1})}\le C\frac{1}{\gamma_\alpha(D_k)}.\]
While
\[\|\widetilde {\eta}_1-\widetilde {\eta}_{0}\|_\infty \le \frac{1}{\gamma_\alpha\left(0,\sqrt{2}\right)}+\frac{1}{\gamma_\alpha(0,\frac12)}\le C\frac{1}{\gamma_\alpha\left(0,\sqrt{2}\right)}.\]
\end{enumerate}

Now, we are going to see that
\[
\sum_{k=0}^\infty \left|\int_0^\infty f(y)\mu_k(y)d\gamma_\alpha(y)\right|\le C\left(\|f\|_{L^1((0,\infty),\gamma_\alpha)}+\mathcal{E}_\alpha(f)\right).
\]
We can write, for every $k\in \mathbb{N}$,
\begin{align*}
\int_0^\infty f(x)\mu_k(x)d\gamma_\alpha(x)&=\int_0^\infty\left(\int_0^x\mu_k'(y)dy+\mu_k\left(0^+\right)\right)f(x)d\gamma_\alpha(x)\\
&=\int_0^\infty\mu_k'(y)\int_y^\infty f(x)d\gamma_\alpha(x)dy+\mu_k\left(0^+\right)\int_0^\infty f(x)d\gamma_\alpha(x).
\end{align*}
Here $\phi\left(0^+\right)$ stands for $\lim_{x\rightarrow 0^+}\phi(x)$. We have that $\mu_k\left(0^+\right)=\sum_{j=k}^\infty\eta_j\left(0^+\right)=0$, for each $k\in \mathbb{N}$, $k\ge 1$, and $\mu_0\left(0^+\right)=1$. As in \cite[p.~1687]{MMS-max} we get that $\supp(\mu_k')\subset I_k$ and 
\[
|\mu_k'(y)|\le \frac{C}{r_k}\le C(1+c_k),\quad k\in \mathbb{N},
\]
where $C$ is independent of $k$. Hence,
\begin{align*}
\sum_{k=0}^\infty \left|\int_0^\infty f(y)\mu_k(y)d\gamma_\alpha(y)\right|&\le C\sum_{k=0}^\infty\int_{I_k}(1+c_k
)\left|\int_y^\infty f(x)d\gamma_\alpha(x)\right|dy\\
&\quad +\|f\|_{L^1((0,\infty),\gamma_\alpha)}\\
&\le C\int_0^\infty (1+y)\left|\int_y^\infty f(x)d\gamma_\alpha(x)\right|dy\\
&\quad +\|f\|_{L^1((0,\infty),\gamma_\alpha)}\\
&\le C\left(\mathcal{E}_\alpha(f)+\|f\|_{L^1((0,\infty),\gamma_\alpha)}\right).
\end{align*}
Our purpose is established and the proof of the ``if'' implication of Theorem \ref{H1-Malfaloc} is finished.

We will now prove the ``only if'' implication of Theorem \ref{H1-Malfaloc}. We will see that, for every $f\in \mathcal{H}^1((0,\infty),\gamma_\alpha)$,
\begin{equation}\label{estH1}
\|\mathcal{M}_{\alpha,\loc}(f)\|_{L^1((0,\infty),\gamma_\alpha)}+\mathcal{E}_\alpha(f)\le C\|f\|_{\mathcal{H}^1((0,\infty),\gamma_\alpha)}.
\end{equation}
In order to do this we will first show that there exists $C>0$ such that, for every~$(1,\infty,\alpha)$-atom $b$,
\begin{equation}\label{estato}
\|\mathcal{M}_{\alpha,\loc}(b)\|_{L^1((0,\infty),\gamma_\alpha)}+\mathcal{E}_\alpha(b)\le C.
\end{equation}
Assume that $b$ is a $(1,\infty,\alpha)$-atom associated with the interval $I=I(x_0,r_0)$ where $0<r_0\le \min\{x_0,m(x_0)\}$. According to Lemma \ref{L1}, $\mathcal{M}_{\alpha,\loc}(b)$ is supported on the interval 
$\widetilde{I}=I(x_0, hm(x_0))$. Let $x\in (0,\infty)$ and suppose that $\phi\in \mathcal{A}_{x,\loc}^\alpha$ is associated to the interval $J=I(x,r_1)$. Then,
\begin{align*}
\left|\int_0^\infty b(y)\phi(y)d\mathfrak{m}_\alpha(y)\right|&\le \|b\|_{L^\infty((0,\infty),\gamma_\alpha)}\int_{I\cap J}|\phi(y)|d\mathfrak{m}_\alpha(y)\\
&\le \|b\|_{L^\infty((0,\infty),\gamma_\alpha)}\frac{\mathfrak{m}_\alpha(I\cap J)}{\mathfrak{m}_\alpha(J)}\\
&\le \frac{1}{\gamma_\alpha(I)},
\end{align*}
which yields
\[
\mathcal{M}_{\alpha,\loc}(b)(x)\le \frac{1}{\gamma_\alpha(I)}.
\]
We deduce that
\[
\int_{2I}\mathcal{M}_{\alpha,\loc}(b)(x)d\gamma_\alpha(x)\le\frac{\gamma_\alpha(2I)}{\gamma_\alpha(I)}\le C.
\]

Suppose now that $x\in \widetilde{I}\setminus (2I)$ and $\phi\in \mathcal{A}_{x,\loc}^\alpha$ associated to the interval $J$ given above. We can write
\begin{align*}
\int_0^\infty b(y)\phi(y)d\mathfrak{m}_\alpha(y)&=\int_0^\infty b(y)(\phi(y)-\phi(x_0))d\mathfrak{m}_\alpha(y)+\phi(x_0)\int_0^\infty b(y)d\mathfrak{m}_\alpha(y), 
\end{align*}
and estimate each term separately.

Since $\supp(\phi)\subset J$ and $\supp(b)\subset I$ we can assume that $r_1>\dist (x,I)$ (otherwise, if $0<r_1\le \dist (x,I)$, then $J\subset I^c$ and $\int_0^\infty b\phi d\mathfrak{m}_\alpha=0$). Since $\phi\in \mathcal{A}_{x,\loc}^\alpha$, by \cite[(1.4)]{YY} we have that
\begin{align*}
\left|\int_0^\infty b(y)(\phi(y)-\phi(x_0))d\mathfrak{m}_\alpha(y)\right|&\le \int_0^\infty |b(y)||\phi(y)-\phi(x_0)|d\mathfrak{m}_\alpha(y)\\
&\le \frac{1}{r_1\mathfrak{m}_\alpha(J)}\int_I|y-x_0||b(y)|d\mathfrak{m}_\alpha(y)\\
&\le C\frac{r_0}{r_1\mathfrak{m}_\alpha(J)}\int_I|b(y)|\frac{\gamma(y)}{\gamma(x_0)}d\mathfrak{m}_\alpha(y)\\
&\le C\frac{r_0}{r_1\mathfrak{m}_\alpha(J)\gamma(x_0)}\\
&\le C\frac{r_0}{\dist (x,I)\mathfrak{m}_\alpha(I(x, \dist(x,I)))\gamma(x_0)}\\
&\le C\frac{r_0}{d(x,I)^2x^{2\alpha+1}\gamma(x_0)}.
\end{align*}

On the other hand, since $\int_0^\infty b(y)d\gamma_\alpha(y)=0$, we get
\[
\int_0^\infty b(y)d\mathfrak{m}_\alpha(y)=\int_0^\infty b(y)y^{2\alpha+1}\frac{\gamma(x_0)-\gamma(y)}{\gamma(x_0)}dy.
\]
As in \cite[(3.6)]{MM} we obtain 
\[
\left|\frac{\gamma(x_0)-\gamma(y)}{\gamma(x_0)}\right|\le Cr_0(1+x_0),\quad y\in I.
\]
Then, using that $\int_0^\infty|b(y)|d\gamma_\alpha(y)\le C$,  we get
\[
\left|\int_0^\infty b(y)d\mathfrak{m}_\alpha(y)\right|\le C\frac{r_0(1+x_0)}{\gamma(x_0)}.
\]
By applying again \cite[(3.6)]{YY}, we have that
\begin{align*}
\left|\phi(x_0)\int_0^\infty b(y)d\mathfrak{m}_\alpha(y)\right|&\le C |\phi(x_0)|\frac{r_0(1+x_0)}{\gamma(x_0)}\\
&\le C\frac{r_0(1+x_0)}{\gamma(x_0)\mathfrak{m}_\alpha(J)}\\
&\le C\frac{r_0(1+x_0)}{\gamma(x_0)x^{2\alpha+1}r_1}\\
&\le C\frac{r_0(1+x_0)}{\gamma(x_0)x^{2\alpha+1}\dist (x,I)}.
\end{align*}
By combining the above estimates we conclude that, for every $x\in \widetilde{I}\setminus (2I)$,
\[
\mathcal{M}_{\alpha,\loc}(b)(x)\le C\left(\frac{r_0}{d(x,I)^2x^{2\alpha+1}\gamma(x_0)}+\frac{r_0(1+x_0)}{\dist (x,I)x^{2\alpha+1}\gamma(x_0)}\right).
\]
We deduce that
\begin{align*}
\int_{\widetilde{I}\setminus (2I)}&\mathcal{M}_{\alpha,\loc}(b)(x)d\gamma_\alpha(x)\\
&\le C\left(\int_{\widetilde{I}\setminus (2I)}\frac{r_0}{d(x,I)^2\gamma(x_0)}d\gamma(x)+\int_{\widetilde{I}\setminus (2I)}\frac{r_0(1+x_0)}{\dist (x,I)\gamma(x_0)}d\gamma(x)\right)\\
&\le C\left(r_0\int_{(2I)^c}\frac{dx}{\dist (x,I)^2}+r_0(1+x_0)\int_{\widetilde{I}\setminus (2I)}\frac{dx}{\dist (x,I)}\right)\\
&\le C\left(1+r_0(1+x_0)\log\left(\frac{m(x_0)}{r_0}\right)\right)\\
&\le C\left(1+\frac{r_0}{m(x_0)}\log\left(\frac{m(x_0)}{r_0}\right)\right)\le C.
\end{align*}
It follows that
\[
\int_0^\infty \mathcal{M}_{\alpha,\loc}(b)(x)d\gamma_\alpha(x)\le C,
\]
where $C>0$ does not depend on $b$.

Clearly, if $b(x)=1$ for every $x\in (0,\infty)$, then $\mathcal{M}_{\alpha,\loc}(b)\le 1$ and
\[
\int_0^\infty \mathcal{M}_{\alpha,\loc}(b)(x)d\gamma_\alpha(x)\le 1
\]
for this atom.

\refstepcounter{BDQSc}\label{proof-Ealfa} We now prove that, for a certain $C>0$, $\mathcal{E}_\alpha(b)\le C$, for every $(1,\infty,\alpha)$-atom $b$. Suppose that $b$ is a $(1,\infty,\alpha)$-atom associated to the interval $I=I(x_0,r_0)$. Since $\int_0^\infty bd\gamma_\alpha=0$ we deduce that $\int_x^\infty bd\gamma_\alpha=0$ for $x\notin I$, and therefore
\[
\left|\int_x^\infty b(y)d\gamma_\alpha(y)\right|\le \int_I|b(y)|d\gamma_\alpha(y)\chi_I(x)\le \chi_I(x),\quad x\in (0,\infty).
\]
Hence,
\begin{align*}
\mathcal{E}_\alpha(b)&\le\int _Ix\left|\int_x^\infty b(y)d\gamma_\alpha(y)\right|dx\\
&\le \int_I|b(y)|\int_{x_0-r_0}^yxdxd\gamma_\alpha(y)\\
&\le \int_I|b(y)|d\gamma_\alpha(y)\left(\frac{(x_0+r_0)^2-(x_0-r_0)^2}{2}\right)\le 2.
\end{align*}
In the last inequality we have taken into account that $x_0r_0\le 1$.

On the other hand, if $b(x)=1$ for every $x\in (0,\infty)$, we can write
\[
\mathcal{E}_\alpha(1)=\int_0^\infty x \left|\int_x^\infty d\gamma_\alpha(y)\right|dx\le \int_0^\infty\int_0^yxdxd\gamma_\alpha(y)\le C\int_0^\infty y^{2\alpha+3}e^{-y^2}dy\le C.
\]
Thus \eqref{estato} is proved.

As it was established by Bownik (\cite{Bo}) in the Euclidean setting, the property \eqref{estato} is not sufficient to deduce the validity of \eqref{estH1}. 

In order to obtain \eqref{estH1}, we are going to see that $\mathcal{M}_{\alpha,\loc}$ is bounded from $L^1((0,\infty),\gamma_\alpha)$ into $L^{1,\infty}((0,\infty),\gamma_\alpha)$. Let $f\in L^1((0,\infty),\gamma_\alpha)$ and $\lambda>0$. We consider the sequence $\{I_i\}_{i\in \mathbb{N}}$ introduced in the first part of this proof. We have that
\[
\{x\in (0,\infty):\mathcal{M}_{\alpha,\loc}(f)(x)>\lambda\}\subset \bigcup_{i=0}^\infty\{x\in I_i:\mathcal{M}_{\alpha,\loc}(f)(x)>\lambda\}.
\]
Then,
\begin{align*}
\gamma_\alpha(\{x\in (0,\infty):\mathcal{M}_{\alpha,\loc}(f)(x)>\lambda\})&\le \sum_{i=0}^\infty\gamma_\alpha(\{x\in I_i:\mathcal{M}_{\alpha,\loc}(f)(x)>\lambda\})\\
&\le \sum_{i=0}^\infty\gamma(c_i)\mathfrak{m}_\alpha(\{x\in I_i:\mathcal{M}_{\alpha,\loc}(f)(x)>\lambda\}).
\end{align*}
We can write, for every $x\in (0,\infty)$ and $\phi\in \mathcal{A}_{x,\loc}^\alpha$,
\[
\left|\int_0^\infty f(y)\phi(y)d\mathfrak{m}_\alpha(y)\right|\le \frac{1}{\mathfrak{m}_\alpha(I(x,r))}\int_{x-r}^{x+r}|f(y)|d\mathfrak{m}_\alpha(y),
\]
for some $0<r\le \min\{x,m(x)\}$. Then, since $m(c_i)\sim m(x)$ for every $x\in I_i$ and $ y\in I(x,r)$, we get $y\in CI_i$ and
\[
\int_{x-r}^{x+r}|f(y)|d\mathfrak{m}_\alpha(y)=\int_{x-r}^{x+r}|f(y)|\chi_{CI_i}(y)d\mathfrak{m}_\alpha(y).
\]
We deduce that, for $i\in \mathbb{N}$,
\[
\mathcal{M}_{\alpha,\loc}(f)(x)\le M_{\mathfrak{m}_\alpha}(f\chi_{CI_i})(x),\quad x\in I_i,
\]
where $M_{\mathfrak{m}_\alpha}$ denotes the centered Hardy-Littlewood  maximal function defined by the measure $\mathfrak{m}_\alpha$ on $(0,\infty)$.

It is well-known that $M_{\mathfrak{m}_\alpha}$ is bounded from $L^1((0,\infty),\mathfrak{m}_\alpha)$ into $L^{1,\infty}((0,\infty),\mathfrak{m}_\alpha)$. Thus,
\begin{align*}
\gamma_\alpha(\{x\in (0,\infty)&:\mathcal{M}_{\alpha,\loc}(f)(x)>\lambda\})
\\&\le \sum_{i=0}^\infty\gamma(c_i)\mathfrak{m}_\alpha(\{x\in I_i:M_{\mathfrak{m}_\alpha}(f\chi_{CI_i})(x)>\lambda\})\\
&\le \frac{C}{\lambda}\sum_{i=0}^\infty \gamma(c_i)\int_{CI_i}|f(y)|d\mathfrak{m}_\alpha(y)\\
&\le \frac{C}{\lambda}\sum_{i=0}^\infty\int_{CI_i}|f(y)|d\gamma_\alpha(y)\\
&\le \frac{C}{\lambda}\int_0^\infty |f(y)|d\gamma_\alpha(y).
\end{align*}
Therefore, we have proved that the local maximal function $\mathcal{M}_{\alpha,\loc}$ is bounded from~$L^1((0,\infty),\gamma_\alpha)$ into $L^{1,\infty}((0,\infty),\gamma_\alpha)$.

Suppose that $f=\sum_{j=0}^\infty\lambda_jb_j$, where, for every $j\in \mathbb{N}$, $b_j$ is a $(1,\infty,\alpha)$-atom and $\lambda_j\in \mathbb{C}$ being $\sum_{j=0}^\infty|\lambda_j|<\infty$. The series defining $f$ converges in $L^1((0,\infty),\gamma_\alpha)$ and from the boundedness proved above for $\mathcal{M}_{\alpha,\loc}$, 
\[
\mathcal{M}_{\alpha,\loc}(f)=\lim_{k\to\infty}\mathcal{M}_{\alpha,\loc}\left(\sum_{j=0}^k\lambda_jb_j\right),\quad \text{ in } L^{1,\infty}((0,\infty),\gamma_\alpha).
\]
Then, there exists an increasing function $\psi:\mathbb{N}\rightarrow \mathbb{N}$ such that
\[
\mathcal{M}_{\alpha,\loc}(f)(x)=\lim_{k\to\infty}\mathcal{M}_{\alpha,\loc}\left(\sum_{j=0}^{\psi(k)}\lambda_jb_j\right)(x),\quad \text{a.e. } x\in (0,\infty),
\]
which yields
\begin{align*}
\mathcal{M}_{\alpha,\loc}(f)(x)
&\le\lim_{k\to\infty}\sum_{j=0}^{\psi(k)}|\lambda_j|\mathcal{M}_{\alpha,\loc}(b_j)(x)\\
&=\sum_{j=0}^{\infty}|\lambda_j|\mathcal{M}_{\alpha,\loc}(b_j)(x),\quad \text{a.e. } x\in (0,\infty).
\end{align*}
By \eqref{estato} it follows that
\[
\|\mathcal{M}_{\alpha,\loc}(f)\|_{L^1((0,\infty),\gamma_\alpha)}\le \sum_{j=0}^\infty|\lambda_j|\|\mathcal{M}_{\alpha,\loc}(b_j)\|_{L^1((0,\infty),\gamma_\alpha)}\le C\sum_{j=0}^\infty |\lambda_j|,
\]
so we can conclude that $\|\mathcal{M}_{\alpha,\loc}(f)\|_{L^1((0,\infty),\gamma_\alpha)}\le C\|f\|_{\mathcal{H}^1((0,\infty),\gamma_\alpha)}$.

Since $f=\sum_{j=0}^\infty \lambda_jb_j$ in $L^1((0,\infty),\gamma_\alpha)$ we obtain
\[
\int_x^\infty f(y)d\gamma_\alpha(y)=\sum_{j=0}^\infty\lambda_j\int_x^\infty b_j(y)d\gamma_\alpha(y),\quad x\in (0,\infty).
\]
Then,
\[
\left|\int_x^\infty f(y)d\gamma_\alpha(y)\right|\le\sum_{j=0}^\infty|\lambda_j|\left|\int_x^\infty b_j(y)d\gamma_\alpha(y)\right|,\quad x\in (0,\infty).
\]
By using the monotone convergence theorem  and \eqref{estato} we get
\[
\mathcal{E}_\alpha(f)\le \sum_{j=0}^\infty |\lambda_j|\mathcal{E}_\alpha(b_j)\le C\sum_{j=0}^\infty|\lambda_j|\leq C\|f\|_{\mathcal{H}^1((0,\infty),\gamma_\alpha)}.
\]
The proof is now finished.

\section{Proof of Theorem \ref{equiv-atomicH1}}\label{sec-equiv-atomiH1}

\ref{mathbbH1} $\Rightarrow$ \ref{mathcalH1} It is sufficient to note that if $b$ is an~$(a,q,\alpha)_w$-atom, then $b$ is also an $(a,q,\alpha)$-atom, when $0<a\le 1$.

\ref{mathcalH1} $\Rightarrow$ \ref{maxlocconv} 
Let $f\in \mathcal{H}^1((0,\infty),\gamma_\alpha)$. Then $\mathcal{M}_{\alpha,\loc}(f)\in L^1((0,\infty),\gamma_\alpha)$ by Theorem~\ref{H1-Malfaloc}. We shall prove that $\mathbb{M}_{1,\loc}(f)\le C\mathcal{M}_{\alpha,\loc}(f)$.


Let $\phi\in \mathbb{A}$ and $t,x\in (0,\infty)$ such that $t\le w(x)$. We define \[\varphi_{x,t}(y)=\frac{1}{ty^{2\alpha+1}}\phi\left(\frac{x-y}{t}\right), y\in (0,\infty).\] We have that $\supp(\varphi_{x,t}) \subset{\left(x-t,x+t\right)}\subset \left(\frac{19}{20}x,\frac{21}{20}x\right)$ {and $I(x,t)\in \mathcal{B}$.} 
Then, by \cite[(1.4)]{YY}, for every $y\in (0,\infty)$,
\[
|\varphi_{x,t}(y)|\le \frac{1}{ty^{2\alpha+1}}\le \frac{C}{tx^{2\alpha+1}}\le \frac{C}{t(x+t)^{2\alpha+1}}\le \frac{C}{\mathfrak{m}_\alpha((I(x,t))}.
\]
On the other hand, we get
\[
\partial_y\varphi_{x,t}(y)=-\frac{2\alpha+1}{ty^{2\alpha+2}}\phi\left(\frac{x-y}{t}\right)-\frac{1}{t^2y^{2\alpha+1}}\phi'\left(\frac{x-y}{t}\right), \quad y\in (0,\infty).
\]
Then, for each $y\in (0,\infty)$,
\[
|\partial_y \varphi_{x,t}(y)|\le C\left(\frac{1}{ty^{2\alpha+2}}+\frac{1}{t^2y^{2\alpha+1}}\right)\le \frac{C}{t^2x^{2\alpha+1}}\le \frac{C}{t\mathfrak{m}_\alpha(I(x,t))},
\]
so it follows that, for a certain $C>0$, ${C\varphi_{x,t}}\in \mathcal{A}_{x,\loc}^\alpha$ {and, therefore, $\mathbb{M}_{1,\loc}(f)\le C\mathcal{M}_{\alpha,\loc}(f)$
as claimed.

Moreover, since $\mathbb{M}_{a,\loc}(f)\leq \mathbb{M}_{1,\loc}(f)$ for any $0<a\leq 1$, we also have that }$\mathbb{M}_{{a},\loc}(f)\in L^1((0,\infty),\gamma_\alpha)$ {for any $0<a\leq 1$} provided that $f\in \mathcal{H}^1((0,\infty),\gamma_\alpha)$.

By proceeding as in the proof of $\mathcal{E}_\alpha(f)<\infty$ for $f\in \mathcal{H}^1((0,\infty),\gamma_\alpha)$ (see Section~\ref{sec-H1-Malfaloc}, p.~\pageref{proof-Ealfa}), in order to see that $\mathbb{E}_\alpha(f)<\infty$ for such $f$, it is sufficient to prove that there exists $C> 0$ such that, for every $(1,\infty,\alpha)$-atom $b$,
\[
\mathbb{E}_{\alpha,1}(b):=\int_0^1\frac{1}{y}\left|\int_0^yb(x)d\gamma_\alpha(x)\right|dy\le C.
\]
If $b(x)=1$ for every $x\in (0,\infty)$, then
\begin{align*}
\mathbb{E}_{\alpha,1}(b)&=\int_0^1\frac{1}{y}\int_0^y x^{2\alpha+1}e^{-x^2}dxdy\\
&\le \int_0^1\frac{1}{y}\int_0^y x^{2\alpha+1}dxdy\\
&\le \frac{1}{2\alpha+2}\int_0^1y^{2\alpha+1}dy=(2\alpha+2)^{-2}.
\end{align*}
Suppose now that $b$ is a $(1,\infty,\alpha)$-atom associated to an interval $I(x_0,r_0)$ with $0<r_0\le x_0$ and $r_0\le m(x_0)$.

Since $\int_0^\infty b(y)d\gamma_\alpha(y)=0$, we have $\int_0^yb(x)d\gamma_\alpha(x)=0$, provided that $|y-x_0|\ge r_0$. Then,
\begin{align*}
\mathbb{E}_{\alpha,1}(b)&=\int_{(x_0-r_0,x_0+r_0)\cap (0,1)}\frac{1}{y}\left|\int_0^yb(x)d\gamma_\alpha(x)\right|dy\\
&\le C\frac{\gamma(x_0)}{\gamma_\alpha((x_0-r_0,x_0+r_0))}\int_{x_0-r_0}^{x_0+r_0}\frac{1}{y}\int_0^yx^{2\alpha+1}dxdy\\
&\le C\frac{\gamma(x_0)}{\gamma_\alpha((x_0-r_0,x_0+r_0))}\int_{x_0-r_0}^{x_0+r_0}y^{2\alpha+1}dy\\
&\le \frac{C}{\gamma_\alpha((x_0-r_0,x_0+r_0))}\int_{x_0-r_0}^{x_0+r_0}d\gamma_\alpha(y)\le C.
\end{align*}
Thus the proof of \ref{mathcalH1} $\Rightarrow$ \ref{maxlocconv} is finished.

\ref{maxlocconv} $\Rightarrow$ \ref{mathbbH1}  Let $a\le 1$. We will see that if $f\in L^1((0,\infty),\gamma_\alpha)$, then $f\in \mathbb{H}^{1,\infty}_{2a}((0,\infty),\gamma_\alpha)$ provided that \ref{maxlocconv} holds for this $a$. In order to prove this property we proceed as in the proof of the corresponding property in Theorem \ref{H1-Malfaloc} (see also the proof of \cite[Theorem~3.3]{MMS-max}). 
We sketch the main steps.

By using \cite[Lemma~2.1]{ChL1} (see also \cite[p.~276]{Dz}) we define the sequence of positive numbers $\{c_j\}_{j\in \mathbb{Z}}$ given by $c_0=1$, $c_j=c_{j-1}+aw(c_{j-1})$ for $j>0$ and $c_j=c_{j+1}-aw(c_{j+1})$ for $j<0$. Then, for every $j\in \mathbb{Z}$, the interval $\mathbb{I}_j=I(c_j,r_j)$ where $r_j=aw(c_j)$, verify the following properties
\begin{enumerate}
    \item $(0,\infty)=\bigcup_{j\in\mathbb{Z}}\mathbb{I}_j$;
    \item For every $k\in \mathbb{Z}$, $\mathbb{I}_k\cap \mathbb{I}_j=\emptyset$ provided that $j\notin\{k-1,k,k+1\}$;
\end{enumerate}
We choose a partition of unity $\{\eta_j\}_{j\in \mathbb{Z}}$ such that $|\eta_j'|\le C\frac{1}{r_
j}$, for every~$j\in \mathbb{Z}$.

Assume that $0<\beta<8$ and $|x-y|\le \beta w(x)$, with $x,y\in (0,\infty)$. We shall see that $w(x)/h(\beta)\le w(y)\le h(\beta) w(x)$ where $h$ is a positive, increasing and continuous function on $(0,8)$. An explicit form for the function $h$ can be obtained. We first consider $x\ge 1$, so $w(x)=\frac{1}{8x}$. Then, $x-\frac{\beta}{8x}\le y\le x+\frac{\beta}{8x}$. If $1\le y\le x+\frac{\beta}{8x}=\frac{8x^2+\beta}{8x}$, since $y>x-\beta w(x)=\frac{8x^2-\beta}{8x}$, we get 
\begin{align*}
 w(y)&=\frac{1}{8y} \le \frac{x}{8x^2-\beta}  \le \frac{x}{(8+\beta)x^2}=\frac{1}{(8-\beta)x}=\frac{8}{8-\beta}w(x).
\end{align*}
Also we have that
\begin{align*}
   w(y)&= \frac{1}{8y} \ge \frac{x}{8x^2+\beta} \ge \frac{x}{(8+\beta)x^2}=\frac{1}{(8+\beta)x}=\frac{8}{8+\beta}w(x).
\end{align*}
Suppose now that $x-\beta w(x)<y<1$. Then, $w(y)=\frac{y}{8}$. Since $x-\frac{\beta}{8x}<1$, it follows that $x<z:=\frac{2+\sqrt{4+2\beta}}{4}$. We obtain
$$
w(y)<\frac{1}{8}<\frac{z}{8x}=zw(x).
$$
Since 
$$
\frac{8-\beta}{8}=1-\frac{\beta}{8}\le x-\frac{\beta}{8x}\le y,
$$
we get
$$
w(x)\le \frac{1}{8}\le \frac{1}{8-\beta}y\le \frac{8}{8-\beta}w(y).
$$

Assume now that $x<1$. We can proceed in a similar way. We have that $w(x)=\frac{x}{8}$ and $(1-\frac{\beta}{8})x<y<(1+\frac{\beta}{8})x$. If $y<1$, then
$$
\frac{8-\beta}{8}w(x)<w(y)=\frac{y}{8}<\frac{8+\beta}{8}\frac{x}{8}=\frac{8+\beta}{8}w(x).
$$
Suppose that $y\ge 1$. Since $x>\frac{8}{8+\beta}y$, we obtain
$$
w(x)=\frac{x}{8}=\frac{x^2}{8x}\le \frac{1}{8x}<\frac{8+\beta}{8}\,\frac{1}{8y}=\frac{8+\beta}{8}w(y),
$$
and
$$
w(y)=\frac{1}{8y}\le \frac{x}{8x}<\frac{8+\beta}{8}\,\frac{x}{8}=\frac{8+\beta}{8}w(x).
$$


Suppose now that $g\in L^1_{\loc}((0,\infty),\gamma_\alpha)$ and $j\in \mathbb{Z}$. Let $\phi\in \mathbb{A}$, $x\in (0,\infty)$ and $0<t\le aw(x)$. Then, since $|x-c_j|\le |x-y|+|y-c_j|\le a(w(x)+w(c_j)\le 3aw(c_j)$ provided that $y\in \mathbb{I}_j$ and $|x-y|<t$, it follows that
\[
(\phi_t*(g\eta_j\gamma_\alpha))(x)=0,\quad x\notin 3\mathbb{I}_j.
\]
Therefore $\supp(\mathbb{M}_{a,\loc}(g\eta_j\gamma_\alpha))\subset 3\mathbb{I}_j$.

We shall now see that
\begin{equation}\label{3.2}
\mathbb{M}_{a,\loc}(g\eta_j\gamma_\alpha)(x)\le C\mathbb{M}_{a,\loc}(g)(x)\gamma_\alpha(c_j)\chi_{3\mathbb{I}_j}(x),\quad x\in (0,\infty),
\end{equation}
by showing that $C\varphi_{x,t}\in \mathbb{A}$, for every $x\in 3\mathbb{I}_j$ and $0<t\le aw(x)$ and some constant $C>0$ that does not depend on $x$ and $t$, being
\[
\varphi_{x,t}(z)=\phi(z)\eta_j(x-tz)\gamma_\alpha(x-tz)/\gamma_\alpha(c_j),\quad z\in (0,\infty).
\]

Let $x\in 3\mathbb{I}_j$ and $0<t\le aw(x)$. Since there exists $C>0$ such that
\begin{equation}\label{3.1}
\frac{1}{C}\gamma_\alpha(y)\le \gamma_\alpha(c_j)\le C\gamma_\alpha(y),\quad y\in 5\mathbb{I}_j,
\end{equation}
and
\[
|x-tz-c_j|\le |x-c_j|+t|z|\le 3aw(c_j)+aw(x)\le 5aw(c_j), \quad |z|\le 1,
\]
we obtain that $|\varphi_{x,t}(z)|\le C$ for any $|z|\le 1$.

On the other hand, 
\begin{align*}
\partial_z\varphi_{x,t}(z)=\frac{1}{\gamma_\alpha(c_j)} & \left(\phi'(z)\eta_j(x-tz)\gamma_\alpha(x-tz)-t\phi(z)\eta_j'(x-tz)\gamma_\alpha(x-tz)\right. \\
&\left. \quad -t\phi(z)\eta_j(x-tz)\gamma_\alpha'(x-tz)\right),\quad |z|\le 1,
\end{align*}
which leads to
\begin{align*}
|\partial_z\varphi_{x,t}(z)|&\le \frac{C}{\gamma_\alpha(c_j)}\left(\gamma_\alpha(x-tz)+\frac{aw(x)\gamma_\alpha(x-tz)}{r_j}\right.\\
&\left.\quad +aw(x)\gamma_\alpha(x-tz)\left(\frac{1}{x}+x\right)\right)\le C,\quad |z|\le 1.
\end{align*}
Thus, for a certain $C>0$, $C\varphi_{x,t}\in \mathbb{A}$ and
 we can write, for every $x\in (0,\infty)$  and $0<t\le aw(x)$,
\[
(\phi_t*(g\eta_j\gamma_\alpha))(x)=((\varphi_{x,t})_t*g)(x)\gamma_\alpha(c_j)\chi_{3\mathbb{I}_j}(x),
\]
so~\eqref{3.2} follows.

Let $f\in L^1((0,\infty),\gamma_\alpha)$ such that $\int_0^\infty fd\gamma_\alpha=0$ and $j\in \mathbb{Z}$. We define
\begin{equation}\label{3.3}
b_j=\frac{\int_0^\infty f\eta_jd\gamma_\alpha}{\int_0^\infty \eta_jd\gamma_\alpha}.
\end{equation}

 Our next objective is to see that $(f-b_j)\eta_j\gamma_\alpha\in H^1(\mathbb{R},dx)$ and
\[
\|(f-b_j)\eta_j\gamma_\alpha\|_{H^1(\mathbb{R},dx)}\le C\int_{3\mathbb{I}_j}\mathbb{M}_{a,\loc}(f)(x)d\gamma_\alpha(x).
\]
Here $H^1(\mathbb{R},dx)$ denotes the classical Hardy space on $\mathbb{R}$. 

Let $\phi\in \mathbb{A}\cap C_c^\infty(0,\infty)$ such that $\int_\mathbb{R}\phi(x) dx\neq 0$. By using \eqref{3.2} and \eqref{3.3} we have that
\begin{align*}
\int_0^\infty \sup_{0<t\le aw(x)} &|\phi_t*((f-b_j)\eta_j\gamma_\alpha)(x)|dx\\
&\le C\gamma_\alpha(c_j)\int_{3\mathbb{I}_j}\mathbb{M}_{a,\loc}(f-b_j)(x)dx\\
&\le C\gamma_\alpha(c_j)\left(\int_{3\mathbb{I}_j}\mathbb{M}_{a,\loc}(f)(x)dx+|b_j|\int_{3\mathbb{I}_j}dx\right)\\
&\le C\left(\int_{3\mathbb{I}_j}\mathbb{M}_{a,\loc}(f)(x)d\gamma_\alpha(x)+\frac{\gamma_\alpha(3\mathbb{I}_j)}{\gamma_\alpha(\mathbb{I}_j)}\int_{\mathbb{I}_j}|f(x)|d\gamma_\alpha(x)\right)\\
&\le C\int_{3\mathbb{I}_j}\mathbb{M}_{a,\loc}(f)(x)d\gamma_\alpha(x),
\end{align*}
where we have used that, since $\eta_j(z)=1$ for $z\in \frac{1}{2}\mathbb{I}_j$,  $\int_0^\infty\eta_jd\gamma_\alpha\ge C\gamma_\alpha(\mathbb{I}_j)$.
We are going to see that
\[
\int_0^\infty \sup_{t> aw(x)}|\phi_t*((f-b_j)\eta_j\gamma_\alpha)(x)|dx\le C\int_{3\mathbb{I}_j}\mathbb{M}_{a,\loc}(f)(x)d\gamma_\alpha(x).
\]
We firstly note that for every $x\in 3\mathbb{I}_j$ and $t\ge aw(x)$, \eqref{3.3} leads to
\begin{align*}
|\phi_t*((f-b_j)\eta_j\gamma_\alpha)(x)|&\le \frac{C}{t}\int_{\mathbb{I}_j}|f(y)-b_j|d\gamma_\alpha(y)\\
&\le \frac{C}{w(x)}\int_{\mathbb{I}_j}(|f(y)|+|b_j|)d\gamma_\alpha(y)\\
&\le \frac{C}{w(c_j)}\int_{\mathbb{I}_j}|f(y)|d\gamma_\alpha(y).
\end{align*}
Then,
\begin{align*}
\int_{3\mathbb{I}_j}\sup_{t> aw(x)}|\phi_t*((f-b_j)\eta_j\gamma_\alpha)(x)|dx&\le C\frac{|\mathbb{I}_j|}{w(c_j)}\int_{\mathbb{I}_j}|f(y)|d\gamma_\alpha(y)\\
&\le C\int_{\mathbb{I}_j}\mathbb{M}_{a,\loc}(f)(y)d\gamma_\alpha(y).
\end{align*}
Here, $|A|$ denotes the Lebesgue measure of $A$, for every measurable set $A\subset \mathbb{R}$.

On the other hand, by proceeding as in the corresponding estimation in Section~\ref{sec-H1-Malfaloc} we can deduce that
\[
\int_{(3\mathbb{I}_j)^c}\sup_{t> aw(x)}|\phi_t*((f-b_j)\eta_j\gamma_\alpha)(x)|dx\le C\int_{\mathbb{I}_j}\mathbb{M}_{a,\loc}(f)(y)d\gamma_\alpha(y).
\]
Thus we prove that
\[
\int_0^\infty\sup_{t> aw(x)}|\phi_t*((f-b_j)\eta_j\gamma_\alpha)(x)|dx\le C\int_{\mathbb{I}_j}\mathbb{M}_{a,\loc}(f)(y)d\gamma_\alpha(y).
\]

Then,
\[
\|(f-b_j)\eta_j\gamma_\alpha\|_{H^1(\mathbb{R},dx)}\le C\int_{3\mathbb{I}_j}\mathbb{M}_{a,\loc}(f)(y)d\gamma_\alpha(y).
\]
By using now \cite[Lemma~2.1]{MMS-max} we can see that
\[
(f-b_j)\eta_j\gamma_\alpha=\sum_{i=0}^\infty \lambda_{i,j}g_{i,j}, \quad \text{ in }L^1((0,\infty),dx),
\]
where, for every $i\in \mathbb{N}$, $g_{i,j}$ is a $(1,\infty)$-classical atom supported on $2\mathbb{I}_j$ and $\lambda_{i,j}\in \mathbb{C}$ being
\[
\sum_{i=0}^\infty |\lambda_{i,j}|\le C \int_{3\mathbb{I}_j}\mathbb{M}_{a,\loc}(f)(y)d\gamma_\alpha(y).
\]
According to \eqref{3.1}, there exists $C>0$ that is not depending on $j$ such that $Cg_{i,j}/\gamma_\alpha$ is a $(2a,\infty,\alpha)_w$-atom, for every $i\in \mathbb{N}$, and
\[
(f-b_j)\eta_j=\sum_{i=0}^\infty \lambda_{ij}g_{i,j}/\gamma_\alpha, \quad \text{ in }L^1((0,\infty),\gamma_\alpha).
\]
As in Section~\ref{sec-H1-Malfaloc} we can prove that
\[
\sum_{j\in \mathbb{Z}}(f-b_j)\eta_j=\sum_{j\in \mathbb{Z}}\sum_{i=0}^\infty \lambda_{i,j}g_{i,j}/\gamma_\alpha, \quad \text{ in }L^1((0,\infty),\gamma_\alpha),
\]
and
\[
\sum_{j\in \mathbb{Z}}\sum_{i=0}^\infty |\lambda_{i,j}|\le C \int_0^\infty\mathbb{M}_{a,\loc}(f)(y)d\gamma_\alpha(y).
\]

As in \cite[p.~1687]{MMS-max} we define $\mu_k=\sum_{j=k}^\infty\eta_j$, $k\in \mathbb{Z}$, and we also consider $\widetilde {\eta}_j=\eta_j/\int_0^\infty\eta_jd\gamma_\alpha$, $j\in \mathbb{Z}$. By using summation by parts we have that
\[
\sum_{j\in \mathbb Z}b_j\widetilde {\eta}_j=\sum_{k=-\infty}^{+\infty}(\widetilde {\eta}_k-\widetilde {\eta}_{k-1})\int_0^\infty f\mu_kd\gamma_\alpha.
\]
There exists $C>0$ such that $C(\widetilde {\eta}_k-\widetilde {\eta}_{k-1})$ is a $(2a,\infty,\alpha)_w$-atom, for every $k\in \mathbb{Z}$. On the other hand recalling that $\int fd\gamma_\alpha = 0$, we get
\begin{align*}
\int_0^\infty f(x)\mu_k(x)d\gamma_\alpha(x)&=\int_0^1 f(x)\left(-\int_x^1\mu_k'(y)dy+\mu_k(1)\right)d\gamma_\alpha(x)\\
&\quad +\int_1^\infty f(x)\left(\int_1^x\mu_k'(y)dy+\mu_k(1)\right)d\gamma_\alpha(x)\\
&=-\int_0^1\mu_k'(y)\int_0^yf(x)d\gamma_\alpha(x)dy\\
&\quad +\int_1^\infty\mu_k'(y)\int_y^\infty f(x)d\gamma_\alpha(x)dy,\quad k\in \mathbb{Z}.
\end{align*}
Since, for every $k\in \mathbb{Z}$, $\supp(\mu_k')\subset \mathbb{I}_k$ and $|\mu_k'|\le C/w(c_k)$, we obtain
\begin{align*}
\sum_{k\in \mathbb{Z}}\left|\int_0^\infty f(x)\mu_k(x)d\gamma_\alpha(x)\right|&\le C\left(\sum_{k\le 0}\int_{\mathbb{I}_k}\frac{1}{c_k}\left|\int_0^yf(x)d\gamma_\alpha(x)\right|dy\right.\\
&\left.\quad +\sum_{k>0}\int_{\mathbb{I}_k}c_k\left|\int_y^\infty f(x)d\gamma_\alpha(x)\right|dy\right)\\
&\le C\left(\sum_{k\le 0}\int_{\mathbb{I}_k}\frac{1}{y}\left|\int_0^yf(x)d\gamma_\alpha(x)\right|dy\right.\\
&\left.\quad +\sum_{k>0}\int_{\mathbb{I}_k}y\left|\int_y^\infty f(x)d\gamma_\alpha(x)\right|dy\right)\\
&\le C\mathbb{E}_\alpha(f).
\end{align*}
We write $f=\sum_{j\in \mathbb Z}(f-b_j)\eta_j+\sum_{j\in \mathbb Z}b_j\eta_j$ and conclude that
$f\in \mathbb{H}_{2a}^{1,\infty}((0,\infty),\gamma_\alpha)$ with
\[
\|f\|_{\mathbb{H}_{2a}^{1,\infty}((0,\infty),\gamma_\alpha)}\le C\left(\int_0^\infty \mathbb{M}_{a,\loc}(f)(x)d\gamma_\alpha(x)+\mathbb{E}_\alpha(f)\right),
\]
where the constant $C>0$ does not depend on $f$.


\vspace{3mm}

The equivalences of the quantities $\|f\|_{\mathcal{H}^1((0,\infty),\gamma_\alpha)}$, $\|f\|_{\mathbb{H}^{1,q}_a((0,\infty),\gamma_\alpha)}$ and \[\|\mathbb{M}_{a,\loc}(f)\|_{ L^1((0,\infty),\gamma_\alpha)}+\mathbb{E}_\alpha(f)\] were established during the proofs of \ref{mathbbH1} $\Rightarrow$ \ref{mathcalH1} $\Rightarrow$ \ref{maxlocconv} $\Rightarrow$ \ref{mathbbH1}.

\section{Proof of Theorem \ref{ThH}.}

By using an integral representation for the modified Bessel function $I_\nu$ (\cite[(5.10.22)]{Leb} we deduce that, for every $t,x,y\in (0,\infty)$
\[
W_t^\alpha(x,y)=\frac{1}{(1-e^{-t})^{\alpha+1}}\int_{-1}^1 \exp\left(-\frac{q(e^{-t/2}x,y,s)}{1-e^{-t}}+y^2\right)\Pi_\alpha(s)ds,
\]
where $\Pi_\alpha(s)=\frac{\Gamma(\alpha+1)}{\Gamma\left(\alpha+\frac12\right)\sqrt{\pi}}(1-s^2)^{\alpha-1/2}$, $s\in (-1,1)$, and $q(x,y,s)=x^2+y^2-2xys$, $x,y\in (0,\infty)$ and $s\in (-1,1)$.

Let $\delta>0$. As in \cite{Sa4} we split $(0,\infty)\times (0,\infty)\times (-1,1)$ in two parts. The first part is named the $\delta$-local part and it is defined by
\[
L_\delta=\left\{(x,y,s)\in (0,\infty)\times (0,\infty)\times (-1,1): \sqrt{q(x,y,s)}\le\frac{\delta}{1+x+y}\right\}.
\]
The second part is $G_\delta=((0,\infty)\times (0,\infty)\times (-1,1))\setminus L_\delta$ and it is named the $\delta$-global part.

We now decompose, for every $t>0$, the operator $W_t^\alpha$ in the global and local parts as follows. We choose an smooth function $\psi$ defined in $(0,\infty)\times (0,\infty)\times (-1,1)$ such that $0\le\psi\le 1$, $\psi(x,y,s)=1$, $(x,y,s)\in L_1$, $\psi(x,y,s)=0$, $(x,y,s)\in G_2$, and
\[
|\partial_x\psi(x,y,s)| + |\partial_y\psi(x,y,s)|\le \frac{C}{\sqrt{q(x,y,s)}}, \quad (x,y,s)\in (0,\infty)\times (0,\infty)\times (-1,1).
\]
For every $t,x,y>0$, we define the local part $W_{t,\loc}^\alpha(x,y)$ of $W_t^\alpha(x,y)$ by
\[
W_{t,\loc}^\alpha(x,y)=\frac{1}{(1-e^{-t})^{\alpha+1}}\int_{-1}^1 \exp\left(-\frac{q(e^{-t/2}x,y,s)}{1-e^{-t}}+y^2\right)\psi(x,y,s)\Pi_\alpha(s)ds,
\]
and the global part $W_{t,\glob}^\alpha(x,y)$ of $W_t^\alpha(x,y)$ by
\[
W_{t,\glob}^\alpha(x,y)=W_t^\alpha(x,y)-W_{t,\loc}^\alpha(x,y).
\]
We define, for every $t>0$,
\[
W_{t,\loc}^\alpha(f)(x)=\int_0^\infty W_{t,\loc}^\alpha(x,y)f(y)d\gamma_\alpha(y),\quad x\in (0,\infty),
\]
and
\[
W_{t,\glob}^\alpha(f)(x)=\int_0^\infty W_{t,\glob}^\alpha(x,y)f(y)d\gamma_\alpha(y),\quad x\in (0,\infty).
\]
Finally we consider the local and global maximal operators defined by
\[
\mathbb{W}_{*,\loc}^\alpha(f)=\sup_{0<t<m(x)^2}|W_{t,\loc}^\alpha(f)|\quad \text{ and }\quad \,\mathbb{W}_{*,\glob}^\alpha(f)=\sup_{0<t<m(x)^2}|W_{t,\glob}^\alpha(f)|.
\]
It is clear that $\mathbb{W}_*^\alpha(f)\le \mathbb{W}_{*,\loc}^\alpha(f)+\mathbb{W}_{*,\glob}^\alpha(f)$.

We are going to prove first that the operator $\mathbb{W}_{*,\loc}^\alpha$ is bounded from the Hardy space $\mathcal{H}^1((0,\infty),\gamma_\alpha)$ into $L^1((0,\infty),\gamma_\alpha)$. Since $\mathbb{W}_{*}^\alpha$ is bounded from $L^1((0,\infty),\gamma_\alpha)$ into $L^{1,\infty}((0,\infty),\gamma_\alpha)$, $\mathbb{W}_{*,\loc}^\alpha$ has the same property. Then, in order to prove our objective it is sufficient to see that there exists $C>0$ such that
\begin{equation}\label{L1W*loc}
    \|\mathbb{W}_{*,\loc}^\alpha(b)\|_{L^1((0,\infty),\gamma_\alpha)}\le C,
\end{equation}
for every $(1,\infty,\alpha)$-atom $b$.

If $b(x)=1$, for every $x\in (0,\infty)$, then $\mathbb{W}_{*,\loc}^\alpha(b)(x)\le 1$, for any $x\in (0,\infty)$, and $\|\mathbb{W}_{*,\loc}^\alpha(b)\|_{L^1((0,\infty),\gamma_\alpha)}\le 1$.

On the other hand, suppose that $b$ is a $(1,\infty,\alpha)$-atom associated with an interval $I=(c_I-r_I,c_I+r_I)\in \mathcal{B}$ where $0<r_I\le c_I$. We define $a=b\gamma$. We can write
\begin{align*}
\|a\|_{L^\infty((0,\infty),\mathfrak{m}_\alpha)}&\le C\gamma(c_I)\|b\|_{L^\infty((0,\infty);\gamma_\alpha)}\\
&\le C\gamma(c_I)(\gamma_\alpha(I))^{-1}\\
&\le C\mathfrak{m}_\alpha(I),
\end{align*}
where $C>0$ does not depend on $b$. Then, $a/C$ is a $(\mathfrak{m}_\alpha,\infty)$-atom.

For every $t,x\in (0,\infty)$,
\begin{align*}
W_{t,\loc}^\alpha(b)(x)&=\int_0^\infty b(y)\int_{-1}^1\frac{\exp\left(-\frac{q(e^{-t/2}x,y,s)}{1-e^{-t}}+y^2\right)}{(1-e^{-t})^{\alpha+1}}\psi(x,y,s)\Pi_\alpha(s)dsd\gamma_\alpha(y)\\
&=e^{x^2}\int_0^\infty a(y)\int_{-1}^1\frac{\exp(-\frac{q(e^{-t/2}y,x,s)}{1-e^{-t}}}{(1-e^{-t})^{\alpha+1}}\psi(x,y,s)\Pi_\alpha(s)dsd\mathfrak{m}_\alpha(y).
\end{align*}
We define
\[
K_t^\alpha(x,y)=\int_{-1}^1\frac{\exp\left(-\frac{q(e^{-t/2}y,x,s)}{1-e^{-t}}\right)}{(1-e^{-t})^{\alpha+1}}\psi(x,y,s)\Pi_\alpha(s)ds,\quad t,x,y\in (0,\infty).
\]
According to \cite[\S 4]{BDQS} we can deduce
\begin{equation}\label{ZZ1}
\sup_{0<t\le m(x)^2}|\partial_yK_t(x,y)|\le C\frac{1}{|x-y|\mathfrak{m}_\alpha(I(x,|x-y|))},\quad x,y\in (0,\infty).
\end{equation}
Since $\mathbb{W}_*^\alpha$ is bounded on $L^2((0,\infty),\gamma_\alpha)$, we deduce that
\begin{align*}
\int_0^\infty |\mathbb{W}_{*,\loc}^\alpha(b)(x)|d\gamma_\alpha(x)&=\int_{2I} |\mathbb{W}_{*,\loc}^\alpha(b)(x)|d\gamma_\alpha(x)\\
&\quad +\int_{(2I)^c} |\mathbb{W}_{*,\loc}^\alpha(b)(x)|d\gamma_\alpha(x)\\
&\le \left(\int_0^\infty  |\mathbb{W}_{*}^\alpha(b)(x)|^2d\gamma_\alpha(x)\right)^{1/2}(\gamma_\alpha(2I))^{1/2}\\
&\quad +\int_{(2I)^c} |\mathbb{W}_{*,\loc}^\alpha(b)(x)|d\gamma_\alpha(x)\\
&\le C\|b\|_{L^2((0,\infty),\gamma_\alpha))}(\gamma_\alpha(2I))^{1/2}\\
&\quad +\int_{(2I)^c} |\mathbb{W}_{*,\loc}^\alpha(b)(x)|d\gamma_\alpha(x)\\
&\le C+\int_{(2I)^c} |\mathbb{W}_{*,\loc}^\alpha(b)(x)|d\gamma_\alpha(x).
\end{align*}
On the other hand, we can write
\begin{align*}
W_{t,\loc}^\alpha(b)(x)&=e^{x^2}\int_0^\infty a(y)K_t^\alpha(x,y)d\mathfrak{m}_\alpha(y)\\
&=e^{x^2}\int_0^\infty a(y)(K_t^\alpha(x,y)-K_t^\alpha(x,c_I))d\mathfrak{m}_\alpha(y),\quad x\in (0,\infty).
\end{align*}
By using \eqref{ZZ1} and \cite[(1.4)]{YY} we get
\begin{align*}
\mathbb{W}_{*,\loc}^\alpha(b)(x)&\le Ce^{x^2}\int_I|a(y)|\frac{|y-c_I|}{|x-y|\mathfrak{m}_\alpha(I(x,|x-y|))}d\mathfrak{m}_\alpha(y)\\
&\le C\frac{e^{x^2}r_I}{\mathfrak{m}_\alpha(I)}\int_I\frac{1}{|x-y|^2x^{2\alpha+1}}d\mathfrak{m}_\alpha(y),\quad x\in (0,\infty).
\end{align*}
It follows that
\[
\int_{(2I)^c}\mathbb{W}_{*,\loc}^\alpha(b)(x)d\gamma_\alpha(x)\le C\frac{r_I}{\mathfrak{m}_\alpha(I)}\int_I\int_{(2I)^c}\frac{dx}{|x-y|^2}d\mathfrak{m}_\alpha(y)\le C.
\]
We conclude that \eqref{L1W*loc} holds with constant independent of the atom $b$.

We now prove that the operator $\mathbb{W}_{*,\glob}^\alpha$ is bounded from $\mathcal{H}^1((0,\infty),\gamma_\alpha)$ into $L^1((0,\infty),\gamma_\alpha)$. Actually we will see that $\mathbb{W}_{*,\glob}^\alpha$ is bounded on $L^1((0,\infty),\gamma_\alpha)$.

We take $\alpha=\frac{k}{2}-1$, with $k\in \mathbb{N}$, $k\ge 2$ and for every $\overline{x}\in \mathbb{R}^k$ we write $x=|\overline{x}|$. For every $\overline{x}, \,\overline{y}\in \mathbb{R}^k$ we have that $|\overline{x}-\overline{y}|^2=q(x,y,\cos(\theta))$, where $\theta$ is the angle between $\overline{x}$ and $\overline{y}$. Then, $(x,y,\cos(\theta))\in L_1$ if and only if $|\overline{x}-\overline{y}|\le \frac{1}{1+x+y}$. We now integrate on $\mathbb{R}^k$ using spherical coordinates by performing the change of variables $s=\cos(\theta)$ to obtain
\begin{align*}
|W_{t,\glob}^\alpha(f)(x)|&\le C\int_{|\overline{x}-\overline{y}|\ge \frac{1}{1+x+y}}\frac{\exp\left(-\frac{|e^{-t/2}\overline{x}-\overline{y}|^2}{1-e^{-t}}\right)}{(1-e^{-t})^{k/2}}|f(y)|d\overline{y}\\
&\le C\int_{|\overline{x}-\overline{y}|\ge \frac{1}{1+x+y}}W_t^{OU}(\overline{x},\overline{y})|f(y)|d\overline{y}, \quad x\in \mathbb{R}^k, t\in (0,\infty).
\end{align*}
Here, for every $\overline{x},\,\overline{y}\in \mathbb{R}^k$ and $t\in (0,\infty)$, $W_t^{OU}(\overline{x},\overline{y})$ represents the integral kernel of the Ornstein-Uhlenbeck semigroup in $\mathbb{R}^k$ (see \cite{HTV}).

We are going to show that  $|\overline{x}-\overline{y}|\ge m(x)$  provided that $|\overline{x}-\overline{y}|\ge \frac{1}{1+x+y}$. In order to do so, we shall consider three cases.
\begin{enumerate}[label=(\alph*)]
    \item Assume that $y\ge 4$ and $y\le 2x$. Then, $x\ge 2$ and $m(x)=\frac{1}{x}$. It follows that 
    \[
    |\overline{x}-\overline{y}|\ge \frac{1}{1+x+y}\ge \frac{1}{1+3x}=\frac{1}{x}\frac{x}{1+3x}\ge C\frac{1}{x}.
    \]
    \item If $y\ge 4$ and $y\ge 2x$, then 
    \[
    |\overline{x}-\overline{y}|\ge y-x \ge \frac{y}{2}\ge 2\ge m(x). 
    \]
    \item Assume that $y\le 4$. We deduce that, for $x\le 1$, 
    \[
    |\overline{x}-\overline{y}|\ge \frac{1}{1+x+y}\ge \frac{1}{5+x}\ge \frac{1}{6}=\frac{1}{6}m(x),
    \]
    while, for $x>1$, 
    \[
    |\overline{x}-\overline{y}|\ge \frac{1}{1+x+y}\ge \frac{1}{5+x}\ge {\frac1x}\frac{x}{5+x}\ge Cm(x).
    \]
\end{enumerate}
Thus our objective is established.

We get 
\[
|W_{t,\glob}^\alpha(f)(x)|\le C\int_{|\overline{x}-\overline{y}|\ge m(x)}W_t^{OU}(\overline{x},\overline{y})|f(y)d\overline{y}, \quad x\in \mathbb{R}^k, t\in (0,\infty).
\]
Let $\omega$ be a measurable function in $(0,\infty)$ such that $\omega(x)\in (0,1)$, $x\in (0,\infty)$. We consider, for every $z\in \mathbb{C}$ with $\Real(z)>-\frac12$, the operator
\[
S_{\omega}^z(f)(x)=e^{-x^2}x^{2z+1}W_{\omega(x),\glob}^z\left(f(y)e^{y^2}y^{-1-2z}\right)(x)\chi_{(0,m(x)^2)}(\omega(x)),\; x\in (0,\infty).
\]
We have that
\begin{enumerate}[label=(\alph*)]
    \item For every $z\in \mathbb{C}$, $\Real(z)>-\frac12$, $S_{\omega}^z$ is a measurable function on $(0,\infty)$ provided that $f$ is a simple function on $(0,\infty)$.
    \item Suppose that $f,g$ 
    are simple functions on $(0,\infty)$. The function 
\[
F(z)=\int_0^\infty S_{\omega}^z(f)(x)g(x)dx,\quad z\in \mathbb{C},\quad \Real(z)>-\tfrac12,
\]
is holomorphic. Furthermore, for every $-1/2<c<d<\infty$, there exists $C>0$ such that $|F(x+iy)|\le C$, $c\le x\le d$ and $y\in \mathbb{R}$.
\end{enumerate}

We have that, for every $\overline{x}\in \mathbb{R}^k$ and $\sigma\in \mathbb{R}$,
\[
\left|S_{\omega}^{\frac{k}{2}-1+i\sigma}(f)(x)\right|\le Ce^{-x^2}x^{k-1}\sup_{0<t<m(x)^2}\int_{|\overline{x}-\overline{y}|\ge  m(x)}W_t^{OU}(\overline{x},\overline{y})|f(y)|y^{-k+1}e^{y^2}d\overline{y}.
\]
According to \cite[Proposition~2.4~(i)]{Po} (see also \cite{GIT}) we obtain
\[
\left\|S_{\omega}^{\frac{k}{2}-1+i\sigma}(f)\right\|_{L^1((0,\infty),dx)}\le C\|f\|_{L^1((0,\infty),dx)}, \quad f\in L^1((0,\infty),dx), \sigma\in \mathbb{R},
\]
where $C>0$ does not depend on $\sigma\in \mathbb{R}$ and the function $\omega$.

By using \cite[Theorem 1]{StInterp} we deduce that, for every $z\in \mathbb{C}$, $\Real(z)\geq0$, there exists $C>0$ such that
\[
\|S_{\omega}^{z}(f)\|_{L^1((0,\infty),dx)}\le C\|f\|_{L^1((0,\infty),dx)}, \quad f\in L^1((0,\infty),dx),
\]
where $C>0$ does not depend on the function $\omega$.

Then, for every $\alpha>0$ there exists $C>0$ independent of $\omega$ such that
\[
\|\mathbb{S}_{\omega}^{\alpha}(f)\|_{L^1((0,\infty),\gamma_\alpha)}\le C\|f\|_{L^1((0,\infty),\gamma_\alpha)}, \quad f\in L^1((0,\infty),\gamma_\alpha),
\]
where
\[
\mathbb{S}_{\omega}^{\alpha}(f)(x)={W}_{\omega(x),\glob}^\alpha(f)(x)\chi_{(0,m(x)^2)}(\omega(x)),\quad x\in (0,\infty).
\]
Let $f\in L^1((0,\infty),\gamma_\alpha)$. We choose $\{t_j\}_{j=1}^\ell\in \mathbb{Q}\cap (0,1)$. For every $x\in (0,\infty)$, we define
\begin{align*}
j(x)=\min\left\{j=1,\ldots,\ell: |W_{t_j,\glob}^\alpha \right.&(f)(x)\chi_{(0,m(x)^2)}(t_j)|\\
&=\left.\max_{k=1,\ldots,\ell}|W_{t_k,\glob}^\alpha(f)(x)\chi_{(0,m(x)^2)}(t_k)|\right\},
\end{align*}
and $\omega(x)=t_{j(x)}$.

We also consider, for every $j=1,\ldots,\ell$, the set
\[
A_j=\left\{x\in (0,\infty):\omega(x)=t_j\right\}.
\]
Note that, for every $j=1,\ldots,\ell$, $x\in A_j$ if and only if
\[
|W_{t_j,\glob}^\alpha(f)(x)
\chi_{(0,m(x)^2)}(t_j)|
=\max_{k=1,\ldots,\ell}|W_{t_k,\glob}^\alpha(f)(x)\chi_{(0,m(x)^2)}(t_k)|,
\]
and, when $j>1$, for every $i=1,\ldots,j-1$,
\[
|W_{t_i,\glob}^\alpha(f)(x)\chi_{(0,m(x)^2)}(t_i)|
<\max_{k=1,\ldots,\ell}|W_{t_k,\glob}^\alpha(f)(x)\chi_{(0,m(x)^2)}(t_k)|.
\]
It follows that $A_j$ is measurable, for every $j=1,\ldots,\ell$.

We can write
\[
\omega=\sum_{j=1}^\ell t_j\chi_{A_j}.
\]
Hence $\omega$ is a measurable function and
\[
|\mathbb{S}_{\omega}^\alpha(f)(x)|=\max_{j=1,\ldots,\ell}|W_{t_j,\glob}^\alpha(f)(x)\chi_{(0,m(x)^2)}(t_j)|, \quad x\in (0,\infty).
\]
Then
\[
\left\|\max_{j=1,\ldots,\ell}|W_{t_j,\glob}^\alpha(f)(x)\chi_{(0,m(x)^2)}(t_j)|\right\|_{L^1((0,\infty),\gamma_\alpha)}\le C\|f\|_{L^1((0,\infty),\gamma_\alpha)}.
\]
Here $C>0$ does not depend on $f$ and $\{t_j\}_{j=1}^\ell$.

We write $\mathbb{Q}\cap (0,1)=\{t_j\}_{j=1}^\infty$. We have that, for any $x\in (0,\infty)$
\[
\sup_{t\in \mathbb{Q}\cap (0,1)}|W_{t,\glob}^\alpha(f)(x)\chi_{(0,m(x)^2)}(t)|=\lim_{k\to\infty}\sup_{j=1,\ldots,k}|W_{t_j,\glob}^\alpha(f)(x)\chi_{(0,m(x)^2)}(t_j)|.
\]
The monotone convergence theorem leads to
\[
\left\|\sup_{t\in \mathbb{Q}\cap (0,1)}|W_{t,\glob}^\alpha(f)(x)\chi_{(0,m(x)^2)}(t)|\right\|_{L^1((0,\infty),\gamma_\alpha)}\le C\|f\|_{L^1((0,\infty),\gamma_\alpha)}.
\]
For every $x\in (0,\infty)$ the function $W_{t,\glob}^\alpha(f)(x)$ is continuous in $t\in (0,\infty)$. Then, for every $x\in (0,\infty)$,
\[
\sup_{\mathbb{Q}\cap (0,\infty)}|W_{t,\glob}^\alpha(f)(x)\chi_{(0,m(x)^2)}(t)|=\sup_{0<t<m(x)^2}|W_{t,\loc}^\alpha(f)(x)|.
\]
We conclude
\[
\|\mathbb{W}_{*,\glob}^\alpha(f)\|_{L^1((0,\infty),\gamma_\alpha)}\le C\|f\|_{L^1((0,\infty),\gamma_\alpha)}.
\]
Thus the proof is finished.


\subsection*{Conflict of interests} The authors declare that there is no conflict of interest.


\bibliographystyle{acm}

\begin{thebibliography}{10}

\bibitem{BDQS}
{\sc Betancor, J.~J., Dalmasso, E., Quijano, P., and Scotto, R.}
\newblock Harmonic analysis operators associated with laguerre polynomial
  expansions on variable {L}ebesgue spaces, 2022.
\newblock arXiv:2202.11137.

\bibitem{BDT}
{\sc Betancor, J.~J., Dziuba\'{n}ski, J., and Torrea, J.~L.}
\newblock On {H}ardy spaces associated with {B}essel operators.
\newblock {\em J. Anal. Math. 107\/} (2009), 195--219.

\bibitem{Bo}
{\sc Bownik, M.}
\newblock Boundedness of operators on {H}ardy spaces via atomic decompositions.
\newblock {\em Proc. Amer. Math. Soc. 133}, 12 (2005), 3535--3542.

\bibitem{CMM2}
{\sc Carbonaro, A., Mauceri, G., and Meda, S.}
\newblock {$H^1$} and {BMO} for certain locally doubling metric measure spaces
  of finite measure.
\newblock {\em Colloq. Math. 118}, 1 (2010), 13--41.

\bibitem{ChL1}
{\sc Cha, L., and Liu, H.}
\newblock B{MO}-boundedness of maximal operators and {$g$}-functions associated
  with {L}aguerre expansions.
\newblock {\em J. Funct. Spaces Appl.\/} (2012), Art. ID 923874, 21.

\bibitem{ChL2}
{\sc Cha, L., and Liu, H.}
\newblock {$BMO$} spaces for {L}aguerre expansions.
\newblock {\em Taiwanese J. Math. 16}, 6 (2012), 2153--2186.

\bibitem{CW}
{\sc Coifman, R.~R., and Weiss, G.}
\newblock {\em ``Analyse harmonique non-commutative sur certains espaces
  homog\`enes''}.
\newblock Lecture Notes in Mathematics, Vol. 242. Springer-Verlag, Berlin-New
  York, 1971.
\newblock \'{E}tude de certaines int\'{e}grales singuli\`eres.

\bibitem{Dz}
{\sc Dziuba\'{n}ski, J.}
\newblock Hardy spaces for {L}aguerre expansions.
\newblock {\em Constr. Approx. 27}, 3 (2008), 269--287.

\bibitem{GS}
{\sc Gosselin, J., and Stempak, K.}
\newblock A weak-type estimate for {F}ourier-{B}essel multipliers.
\newblock {\em Proc. Amer. Math. Soc. 106}, 3 (1989), 655--662.

\bibitem{GIT}
{\sc Guti\'{e}rrez, C.~E., Incognito, A., and Torrea, J.~L.}
\newblock Riesz transforms, {$g$}-functions, and multipliers for the {L}aguerre
  semigroup.
\newblock {\em Houston J. Math. 27}, 3 (2001), 579--592.

\bibitem{Ha}
{\sc Haimo, D.~T.}
\newblock Integral equations associated with {H}ankel convolutions.
\newblock {\em Trans. Amer. Math. Soc. 116\/} (1965), 330--375.

\bibitem{HTV}
{\sc Harboure, E., Torrea, J.~L., and Viviani, B.}
\newblock On the search for weighted inequalities for operators related to the
  {O}rnstein-{U}hlenbeck semigroup.
\newblock {\em Math. Ann. 318}, 2 (2000), 341--353.

\bibitem{Hi}
{\sc Hirschman, Jr., I.~I.}
\newblock Variation diminishing {H}ankel transforms.
\newblock {\em J. Analyse Math. 8\/} (1960/61), 307--336.

\bibitem{Leb}
{\sc Lebedev, N.~N.}
\newblock {\em ``Special functions and their applications''}.
\newblock Dover Publications, Inc., New York, 1972.
\newblock Revised edition, translated from the Russian and edited by Richard A.
  Silverman, Unabridged and corrected republication.

\bibitem{MVNP}
{\sc Maas, J., van Neerven, J., and Portal, P.}
\newblock Conical square functions and non-tangential maximal functions with
  respect to the {G}aussian measure.
\newblock {\em Publ. Mat. 55}, 2 (2011), 313--341.

\bibitem{MS}
{\sc Mac\'{\i}as, R.~A., and Segovia, C.}
\newblock A decomposition into atoms of distributions on spaces of homogeneous
  type.
\newblock {\em Adv. in Math. 33}, 3 (1979), 271--309.

\bibitem{MM}
{\sc Mauceri, G., and Meda, S.}
\newblock {${\rm BMO}$} and {$H^1$} for the {O}rnstein-{U}hlenbeck operator.
\newblock {\em J. Funct. Anal. 252}, 1 (2007), 278--313.

\bibitem{MMS-max}
{\sc Mauceri, G., Meda, S., and Sj\"{o}gren, P.}
\newblock A maximal function characterization of the {H}ardy space for the
  {G}auss measure.
\newblock {\em Proc. Amer. Math. Soc. 141}, 5 (2013), 1679--1692.

\bibitem{Mu2}
{\sc Muckenhoupt, B.}
\newblock Conjugate functions for {L}aguerre expansions.
\newblock {\em Trans. Amer. Math. Soc. 147\/} (1970), 403--418.

\bibitem{Po}
{\sc Portal, P.}
\newblock Maximal and quadratic {G}aussian {H}ardy spaces.
\newblock {\em Rev. Mat. Iberoam. 30}, 1 (2014), 79--108.

\bibitem{Sa4}
{\sc Sasso, E.}
\newblock Spectral multipliers of {L}aplace transform type for the {L}aguerre
  operator.
\newblock {\em Bull. Austral. Math. Soc. 69}, 2 (2004), 255--266.

\bibitem{StInterp}
{\sc Stein, E.~M.}
\newblock Interpolation of linear operators.
\newblock {\em Trans. Amer. Math. Soc. 83\/} (1956), 482--492.

\bibitem{StLP}
{\sc Stein, E.~M.}
\newblock {\em ``Topics in harmonic analysis related to the
  {L}ittlewood-{P}aley theory''}.
\newblock Annals of Mathematics Studies, No. 63. Princeton University Press,
  Princeton, N.J.; University of Tokyo Press, Tokyo, 1970.

\bibitem{To}
{\sc Tolsa, X.}
\newblock B{MO}, {$H^1$}, and {C}alder\'{o}n-{Z}ygmund operators for non
  doubling measures.
\newblock {\em Math. Ann. 319}, 1 (2001), 89--149.

\bibitem{U}
{\sc Urbina-Romero, W.}
\newblock {\em ``Gaussian harmonic analysis''}.
\newblock Springer Monographs in Mathematics. Springer, Cham, 2019.
\newblock With a foreword by Sundaram Thangavelu.

\bibitem{YY}
{\sc Yang, D., and Yang, D.}
\newblock Real-variable characterizations of {H}ardy spaces associated with
  {B}essel operators.
\newblock {\em Anal. Appl. (Singap.) 9}, 3 (2011), 345--368.

\end{thebibliography}

\end{document}